\newtheorem{prethm}{{\bf Theorem}}
\newenvironment{thm}{\begin{prethm}{\hspace{-0.5
               em}{\bf.}}}{\end{prethm}}
\newtheorem{prepro}[prethm]{Proposition}
\newtheorem{prelem}[prethm]{Lemma}
\newenvironment{lem}{\begin{prelem}{\hspace{-0.5
               em}{\bf.}}}{\end{prelem}}
\newtheorem{precor}[prethm]{Corollary}
\newenvironment{cor}{\begin{precor}{\hspace{-0.5
               em}{\bf.}}}{\end{precor}}
\newtheorem{preremark}{{\bf Remark}}
\newtheorem{preexample}{{\bf Example}}
\newenvironment{example}{\begin{preexample}\em{\hspace{-0.5
               em}{\bf.}}}{\end{preexample}}
\newtheorem{predefinition}{{\bf Definition}}
\newenvironment{definition}{\begin{predefinition}\em{\hspace{-0.5
               em}{\bf.}}}{\end{predefinition}}
\newtheorem{preproof}{{\bf Proof.}}
\newenvironment{proof}[1]{\begin{preproof}{\rm
               #1}\hfill{$\Box$}}{\end{preproof}}
\author{{\normalsize { N. Ghareghani${}^{ \textrm{a},\,1}$}, {P. Sharifani ${}^{ \textrm{b}}$}\,
}\vspace{2mm} \\
{\footnotesize{$^{ \textrm{a}}$\it
Department of
Industrial design, College of Fine Arts, University of Tehran,}}\\{\footnotesize{\it   Tehran,
Iran}}\\
{\footnotesize{$^{ \textrm{b}}$\it
School of Mathematics, Institute for Research in Fundamental Sciences {\rm(IPM),}}}\\{\footnotesize{\it   Tehran,
Iran}}\\
\\{\footnotesize Emails: ghareghani@ut.ac.ir, ghareghani@ipm.ir,  pouyeh.sharifani@gmail.com.}
 }
\title{\bf\ On square factors and critical factors of $k$-bonacci words on infinite alphabet}
\date{}
\begin{document}

\maketitle
\footnotetext[1]{\tt Corresponding author}




\begin{abstract}
For any integer $k>2$, the infinite $k$-bonacci word $W^{(k)}$,  on the infinite alphabet is defined as the fixed point of the morphism
$\varphi_k:\mathbb{N}\rightarrow \mathbb{N}^2 \cup \mathbb{N}$, where
\begin{equation*}
\varphi_k(ki+j) = \left\{
\begin{array}{ll}
(ki)(ki+j+1) & \text{if } j = 0,\cdots ,k-2,\\
(ki+j+1)& \text{if } j =k-1.
\end{array} \right.
\end{equation*}
The finite $k$-bonacci word $W^{(k)}_n$ is then defined as the prefix of $W^{(k)}$ whose length is the $(n+k)$-th $k$-bonacci number.
We obtain the structure of all square factors occurring in $W^{(k)}$. Moreover, we prove that the critical exponent of  $W^{(k)}$ is $3-\frac{3}{2^k-1}$. Finally, we provide all critical factors of  $W^{(k)}$.
\end{abstract}

\vspace{3mm}
\noindent{\em Keywords}: k-bonacci words, words on infinite alphabet, square, critical exponent, critical factor.

\section{Introduction}
The infinite Fibonacci word and finite Fibonacci words are well-studied in the literature and satisfy several extremal properties, see
 \cite{cassaigne2008extremal,de1981combinatorial,mignosi1998periodicity,diekert2010weinbaum,pirillo1997fibonacci}. The infinite Fibonacci word $F^{(2)}$ is the unique fixed point of the binary morphism $0\rightarrow 01$ and $1\rightarrow 0$. The $n$-th finite Fibonacci word $F_n^{(2)}$  is the prefix of of length $f_{n+2}$ of $F^{(2)}$, where $f_n$ is the $n$-th Fibonacci number.
 A natural generalization of Fibonacci words are $k$-bonacci words which are defined on the $k$-letter alphabet $\{0,1,\ldots, k-1\}$. The infinite $k$-bonacci
 word $F^{(k)}$ is the unique fixed point of the morphism  $\phi_k(0)=01, \phi_k(1)=02,\ldots , \phi_k(k-2)=0(k-1), \phi_k(k-1)=0$ (see \cite{tan2007some}). The $n$-th finite $k$-bonacci word $F_n^{(k)}$ is defined to be $\phi^n_k(0)$ or equivalently, the prefix of length $f_{n+k}^{(k)}$ of $F^{(k)}$, where $f_{n+k}^{(k)}$ denotes the $(n+k)$-th $k$-bonacci number. While the Fibonacci words are good examples of binary words, $k$-bonacci words are good examples of words over $k$-letter alphabet and they have many interesting properties (see \cite{tan2007some, adamczewski2003balances, bvrinda2014balances, glen2006sturmian}).

 In \cite{zhang2017some}, authors defined the infinite Fibonacci word on infinite alphabet ${\mathbb N}$ as the fixed point of the morphism $\varphi_2: (2i)\rightarrow (2i)(2i+1)$ and $\varphi_2: (2i+1)\rightarrow (2i+2)$. We denote the infinite Fibonacci word on infinite alphabet by $W^{(2)}$. The $n$-th finite Fibonacci word $W_n^{(2)}$ is then defined similar as $F_n^{(2)}$. It is trivial that if digits (letters) of  $W^{(2)}$ are computed mod $2$, then the resulting word is the ordinary infinite Fibonacci word  $F^{(2)}$. Zhang et al. studied some properties of word  $W^{(2)}$. They studied the growth
order and digit sum of $W^{(2)}$ and gave several decompositions of $W^{(2)}$ using singular words.
 Glen et al.
considered  more properties of $W^{(2)}$ \cite{glen2019more}. Among other results, they investigated the structure of palindrome factors and square factors of $W^{(2)}$.In \cite{ghareghani2019arxive}, authors introduced the finite (infinite) $k$-bonacci word over infinite alphabet, for $k>2$. The $n$-th finite (res. infinite) $k$-bonacci word over infinite alphabet is denoted by $W^{(k)}$ (resp. $W_n^{(k)}$). They studied some properties of these words and classified all palindrome factors of $W^{(k)}$, for $k\geq 3$.

For a finite word $W$ and a positive integer $n$, $W^n$ is simply obtained by concatenating the word $W$, $n$ times with itself and $W^{\omega}$ is defined
as the concatenation of $W$ with itself, infinitely many times; That is $W^{\omega}=W.W.W \ldots$.
For a rational number $r$ with $r.|W|\in \mathbb{N}$, the fractional power $W^r$ is defined to be
 the prefix of length $r.|W|$ of the infinite word $W^{\omega}$.
For example if $W= 0102$ then $W^{\frac{5}{2}}= 0102010201$.
 The index of a factor $U$ of word $W$ is defined as
$$\rm{INDEX}(U,W) = \max\{r \in\mathbb{Q}: U^r\prec W \}.$$
Then the {\it critical exponent} $E(W)$ of an infinite word $W$ is given by
$$E(W)=\sup \{\rm{INDEX}(U,W) : U\in F(W)\setminus \{\epsilon\} \}.$$
A word $U$ is a critical factor of $W$ if $E(W)=\rm{INDEX}(U,W).$
The study of the existence of a factor of the form $U^r$ in a long word and specially computing the critical exponent of a long word is the subject of many papers for example see \cite{mignosi1989infinite, vandeth2000sturmian, berstel1999index, carpi2000special,damanik2002index,justin2001fractional, blondin2007critical}.
Specially, in the case of infinite $k$-bonacci word $F^{(k)}$, it is proved that $E(F^{(k)})=2+\frac{1}{\alpha_k-1}$ (see \cite{glen2009episturmian}), where
$\alpha_k$, the $k$-th generalized golden ratio, is the (unique) positive
real root of the $k$-th degree polynomial $x^k-x^{k-1}-\ldots -x-1$. It is proved that $2-\frac{1}{k}< \alpha_k<1$
\cite{dresden2014simplified, hare2014three}. Hence, $3<E(F^{(k)})<3+\frac{1}{k-1}$, and $E(F^{(2)})=2+\frac{\sqrt{5}+1}{2}$.

In this work we first investigate some properties of $W_n^{(k)}$. Then, using them, we explore the structure of all square factors
of $W_n^{(k)}$. More precisely, we prove that all square factors of $W^{(k)}$ are of the form $ki\oplus C^j(W_n^{(k)})$, for some integers $i>0$ and $j\geq 0$, where $C^j(U)$ denoted the $j$-th conjugate of word $U$. Finally, using the structure of square factors of $W^{(k)}$, we prove that the critical exponent of $W^{(k)}$ is $3-\frac{3}{2^k-1}$.



\section{Preliminaries}
In this section we give more definitions and notations that are used in the paper.
We denote the alphabet, which is a finite or countable infinite set, by ${\mathcal A}$. When ${\mathcal A}$ is a countable infinite set, we simply take
${\mathcal A}={\mathbb N}$; Then each element of ${\mathcal A}$ is called a digit (instead of a letter).
 We denote by ${\mathcal A}^*$ the set of finite words over
${\mathcal A}$ and we let ${\mathcal A}^+ ={\mathcal A}^* \setminus \{\epsilon\}$, where $\epsilon$ the empty word. We denote by ${\mathcal A}^{\omega}$ the set of all infinite words over ${\mathcal A}$ and we let ${\mathcal A}^{\infty} ={\mathcal A}^*\cup {\mathcal A}^{\omega}$.
If $a\in{\mathcal A}$ and $W \in {\mathcal A}^{\infty}$, then the symbols $|W|$ and ${|W|}_a$ denote the length of $W$, and the number of
occurrences of letter $a$ in $W$, respectively.

For a finite word $W=w_1 w_2 \ldots w_{n}$, with $w_i \in {\mathcal A}$ and for $1\leq j\leq j'\leq n$, we denote $W[j,j']=w_j \ldots w_{j'}$, and for simplicity we denote $W[j,j]$ by $W[j]$.
Let $U_i\in {\mathcal A}^{*}$, for $1 \leq i \leq n$, then $\prod_{i=n}^{1} U_i$ is defined to be $U_n U_{n-1} \ldots   U_1$.
For a finite word $W$ and an integer $n$,  $n \oplus W$ denotes the word obtained by adding $n$ to each digit of $W$. For example, let $W=01020103$ and $n=5$, then $n \oplus W=56575658$. Similarly, if every digit of $W$ is grater than $n-1$, then
$W \ominus n$ denotes the word obtained by subtracting $n$ from each digit of $W$.

A word $V\in {\mathcal A}^+$
is a factor of a word $W \in {\mathcal A}^{\infty}$, if there exist
$U\in {\mathcal A}^*$ and $U' \in {\mathcal A}^{\infty}$, such that $W=UVU'$. Similarly, a word $V\in {\mathcal A}^{\infty}$ is a factor of
$W \in {\mathcal A}^{\infty}$ if there exists $U\in {\mathcal A}^*$ such that $W=UV$. When $V$ is a factor of $W$ then we denote it as $V\prec W$.
A word $V\in {\mathcal A}^+$ (resp. $V\in {\mathcal A}^{\infty}$) is said to
be a {\textit prefix} (resp. {\textit suffix}) of a word $W\in {\mathcal A}^{\infty}$, denoted as $V\lhd W$ (resp.
$V\rhd W$), if there exists $U\in {\mathcal A}^{\infty}$ (resp. $U\in {\mathcal A}^{*}$) such that $W=VU$
(resp. $W=UV$).
  If $W\in {\mathcal A}^{*}$ and $W=VU$ (resp. $W=UV$,) we  write
$V=WU^{-1}$ (resp. $V=U^{-1}W$).
The set of all factors of a word $ w $ is denoted by $ F(w) $.
If $W=w_1\ldots w_n$ be a finite word and $0\leq j\leq n-1$, then the {\it $j$-th conjugatae} of $W$ is defined as $C^j(W)=w_{j+1}\ldots w_nw_1 \ldots w_j$. For example the word $0130102$ is the $4$-th conjugate of $0102013$.
A word $ V $ is a {\it conjugate} of $ W $ if
there exists  $0\leq j\leq n-1$ such that $V= C^j(W)$.
A factor of the form $UU$  in $W$ is called a square factor or simply a square. For a square factor
$UU=W[t,t+2|u|]$ of $W$, the {\it center} of the square $UU$ in $W$ is defined to be
$c_s(U^2,W)=t+|U|+\frac{1}{2}$.

The $n$-th $k$-bonacci number defined as

\begin{equation}\label{defkbonum}
f_n^{(k)} = \left\{
\begin{array}{ll}
0\;\; &\text{if } \,\, n = 0,\cdots ,k-2, \\
1\;\; &\text{if } \,\, n = k-1, \\
\sum_{i=n-1}^{n-k}f_i^{(k)}\;\; &\text{if}  \,\, n\geq k.
\end{array} \right.
\end{equation}

The finite (resp. infinite) $k$-bonacci words $W^{(k)}_n$ (resp. $W^{(k)}$) on infinite alphabet $\mathbb{N}$ is defined in \cite{ghareghani2019arxive}, using the morphism $\varphi_k$ given below
\begin{equation*}
\varphi_k(ki+j) = \left\{
\begin{array}{ll}
(ki)(ki+j+1)\;\; & \text{if } j = 0,\cdots ,k-2 \\
(ki+j+1)& \text{otherwise } .
\end{array} \right.
\end{equation*}
More precisely, $W^{(k)}_n=\varphi_k^n(0)$ and $W^{(k)}=\varphi_k^{\omega}(0)$ (Note that $W^{(k)}_0=F^{(k)}_0=0$).
For a fixed value of $k$, the $k$-bonacci words over infinite alphabet are reduced to $k$-bonacci words over finite alphabet when the digits are calculated $\mod\; k$. It is easy to show that for $n \geq 0$,
\begin{equation}\label{size}
|F^{(k)}_n|= |W_n^{(k)}|=f_{n+k}^{(k)}.
\end{equation}



\section{Some properties of $W_n^{(k)}$}
In this section we provide some basic properties $W_n^{(k)}$, some of which are proved in \cite{ghareghani2019arxive}. All of these properties are useful for the rest of the work.
\begin{lem} {\rm[Lemma 4 of \cite{ghareghani2019arxive}]}\label{00}
    Let $n \geq 0$ and $k>2$. The finite word $W_n^{(k)}$ contains no factor $00$.
\end{lem}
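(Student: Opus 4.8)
This is a short structural fact about the morphism $\varphi_k$, and it already appears as Lemma 4 of \cite{ghareghani2019arxive}; the plan is to re-derive it by exploiting the rigid block structure of images under $\varphi_k$ and showing that the digit $0$ can occur in such an image only in one very specific position. Since $W_0^{(k)}=0$ obviously contains no factor $00$, and $W_n^{(k)}=\varphi_k(W_{n-1}^{(k)})$ for $n\ge 1$, it suffices to prove that $\varphi_k(V)$ contains no factor $00$ for every finite word $V$ over $\mathbb{N}$; the statement for all $n$ then follows by taking $V=W_{n-1}^{(k)}$.

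First I would record the shape of the image of a single digit. Writing a digit $d$ as $d=ki+j$ with $0\le j\le k-1$, either $0\le j\le k-2$, in which case $\varphi_k(d)=(ki)(ki+j+1)$ is a block of length two, or $j=k-1$, in which case $\varphi_k(d)=(k(i+1))$ is a block of length one. The word $\varphi_k(V)$ is the concatenation of these blocks, one per digit of $V$, so every digit of $\varphi_k(V)$ is either the leading digit $ki$ of a length-two block, the trailing digit $ki+j+1$ (with $1\le j+1\le k-1$) of a length-two block, or the single digit $k(i+1)$ of a length-one block. Now observe the two key inequalities: a trailing digit satisfies $ki<ki+j+1<k(i+1)$, hence is not a multiple of $k$ and in particular is nonzero; and a length-one block equals $k(i+1)\ge k>0$. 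Therefore the digit $0$ can occur in $\varphi_k(V)$ only as the leading digit $ki$ of a length-two block, which forces $i=0$, i.e. that block is $\varphi_k(d)=(0)(d+1)$ for some $d\in\{0,\ldots,k-2\}$.

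Finally I would conclude: in $\varphi_k(V)$ every occurrence of $0$ is the first digit of a block $(0)(d+1)$, hence is immediately followed, within the same block, by the digit $d+1\ge 1$. So no two occurrences of $0$ are ever adjacent, i.e. $\varphi_k(V)$ has no factor $00$, which completes the induction. The argument is elementary and I do not expect a genuine obstacle; the only place that needs a little care is the case analysis ruling out $0$ as a trailing digit or as a length-one block, which is exactly the pair of inequalities $ki+j+1<k(i+1)$ and $k(i+1)>0$ noted above.
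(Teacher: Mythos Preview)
Your argument is correct. The paper itself does not supply a proof of this lemma; it merely quotes it as Lemma~4 of \cite{ghareghani2019arxive}. Your block analysis of $\varphi_k$---showing that the only way the digit $0$ can arise in any image $\varphi_k(V)$ is as the leading entry of a length-two block $(0)(d{+}1)$ with $d{+}1\ge 1$---is a clean and complete derivation, and in fact works for all $k\ge 2$, not just $k>2$.
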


Following two lemmas give recursive formulas for computing $W^{(k)}_n$.
\begin{lem} {\rm[Lemma 5 of \cite{ghareghani2019arxive}]} \label{struct1}
	For $1 \leq n \leq k-1$,
	\begin{equation}\label{struct1eq}
		W^{(k)}_n=\prod_{i=n-1}^{0}W^{(k)}_i \, \, n.
	\end{equation}
\end{lem}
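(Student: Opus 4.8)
The plan is to prove the identity by induction on $n$. For the base case $n=1$ the right–hand side is $\prod_{i=0}^{0}W^{(k)}_i\,1 = W^{(k)}_0\,1 = 0\,1$, while applying the definition of $\varphi_k$ with $i=0$, $j=0$ gives $W^{(k)}_1=\varphi_k(0)=(0)(1)=01$; the two agree. For the inductive step I would fix $n$ with $2\le n\le k-1$ and assume the claim for $n-1$, namely $W^{(k)}_{n-1}=W^{(k)}_{n-2}W^{(k)}_{n-3}\cdots W^{(k)}_0\,(n-1)$.

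Next I would apply $\varphi_k$ to both sides of the inductive hypothesis. Since $\varphi_k$ is a morphism it distributes over concatenation, and since $W^{(k)}_i=\varphi_k^i(0)$ we have $\varphi_k(W^{(k)}_i)=\varphi_k^{i+1}(0)=W^{(k)}_{i+1}$ for every $i\ge 0$. Hence
\[
W^{(k)}_n=\varphi_k\bigl(W^{(k)}_{n-1}\bigr)=W^{(k)}_{n-1}W^{(k)}_{n-2}\cdots W^{(k)}_1\,\varphi_k(n-1).
\]
Because $n\le k-1$ we have $n-1\le k-2$, so writing $n-1=k\cdot 0+(n-1)$ the first clause of the definition of $\varphi_k$ applies and yields $\varphi_k(n-1)=(0)(n)=0\,n=W^{(k)}_0\,n$. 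Substituting this back gives $W^{(k)}_n=W^{(k)}_{n-1}W^{(k)}_{n-2}\cdots W^{(k)}_1W^{(k)}_0\,n=\prod_{i=n-1}^{0}W^{(k)}_i\,n$, which closes the induction.

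I do not expect a genuine obstacle here; the statement is essentially an unwinding of the action of $\varphi_k$ on the initial segment of the alphabet. The only points that need care are bookkeeping: first, that the hypothesis $1\le n\le k-1$ is exactly what forces $n-1\le k-2$, so that the digit $n-1$ is sent by $\varphi_k$ to the two–digit block $0\,n$ (if $n-1$ were allowed to equal $k-1$, the second clause of $\varphi_k$ would apply instead and the recursion would fail); and second, the re-indexing $\varphi_k(W^{(k)}_i)=W^{(k)}_{i+1}$, which is immediate from $W^{(k)}_i=\varphi_k^i(0)$. One could also bypass induction and simply expand $\varphi_k(W^{(k)}_{n-1})$ directly once an explicit form of $W^{(k)}_{n-1}$ is available, but the inductive phrasing is the cleanest.
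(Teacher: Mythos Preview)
Your proof is correct. The paper does not actually supply its own proof of this lemma---it is quoted verbatim as ``Lemma~5 of \cite{ghareghani2019arxive}''---so there is nothing to compare against; your induction on $n$, using $\varphi_k(W^{(k)}_i)=W^{(k)}_{i+1}$ and the observation that $\varphi_k(n-1)=0\,n=W^{(k)}_0\,n$ precisely when $n-1\le k-2$, is the natural and standard argument.
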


\begin{lem} {\rm[Lemma 7 of \cite{ghareghani2019arxive}]}\label{struct}
	For $n \geq k$,
	\begin{equation}\label{structeq}
 W^{(k)}_n=\prod_{i=n-1}^{n-k+1}W^{(k)}_i \, (k\oplus W^{(k)}_{n-k}).
	\end{equation}
\end{lem}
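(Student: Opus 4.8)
The plan is to prove \eqref{structeq} by induction on $n\ge k$. Two ingredients are needed. First, since $W^{(k)}_m=\varphi_k^m(0)$ we have $\varphi_k(W^{(k)}_m)=W^{(k)}_{m+1}$ for every $m\ge 0$, and $\varphi_k$, being a morphism, distributes over concatenation. Second, the commutation rule $\varphi_k(k\oplus U)=k\oplus \varphi_k(U)$ holds for every finite word $U$. To establish the latter it suffices, since both sides are computed digit by digit, to check it on a single digit $d=ki+j$ with $0\le j\le k-1$: writing $k\oplus d=k(i+1)+j$ and unfolding the definition of $\varphi_k$ in the two cases $j\le k-2$ and $j=k-1$, one sees that adding $k$ to $d$ merely increments the block index $i$ while leaving the residue $j$ unchanged, so the two sides agree.

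For the base case $n=k$, I would apply $\varphi_k$ to the identity of Lemma~\ref{struct1} at $n=k-1$, namely $W^{(k)}_{k-1}=\prod_{i=k-2}^{0}W^{(k)}_i\,(k-1)$. Using that $\varphi_k$ is a morphism, that $\varphi_k(W^{(k)}_i)=W^{(k)}_{i+1}$ for $0\le i\le k-2$, and that $\varphi_k(k-1)=k=k\oplus W^{(k)}_0$ (the digit $k-1$ has residue $k-1$, so $\varphi_k(k-1)=(k-1)+1=k$, and $W^{(k)}_0=0$), the right-hand side becomes $\prod_{i=k-1}^{1}W^{(k)}_i\,(k\oplus W^{(k)}_0)$, which is \eqref{structeq} for $n=k$.

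For the inductive step, assume \eqref{structeq} holds for some $n-1\ge k$, i.e. $W^{(k)}_{n-1}=\prod_{i=n-2}^{n-k}W^{(k)}_i\,(k\oplus W^{(k)}_{n-k-1})$, and apply $\varphi_k$ to both sides. The left side becomes $W^{(k)}_n$. On the right side, distributing $\varphi_k$ over the concatenation, replacing each factor $\varphi_k(W^{(k)}_i)$ by $W^{(k)}_{i+1}$, and using the commutation rule on the last factor to get $\varphi_k(k\oplus W^{(k)}_{n-k-1})=k\oplus W^{(k)}_{n-k}$, one obtains $\prod_{i=n-1}^{n-k+1}W^{(k)}_i\,(k\oplus W^{(k)}_{n-k})$, which is exactly \eqref{structeq} for $n$. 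This closes the induction.

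I expect the only genuine obstacle to be the commutation identity $\varphi_k(k\oplus U)=k\oplus \varphi_k(U)$, and even this is merely a two-case check on the residue of a digit modulo $k$. The rest is bookkeeping; the single point to watch is the reindexing of the product, since a decreasing product running from $n-2$ down to $n-k$ turns, after applying $\varphi_k$, into one running from $n-1$ down to $n-k+1$. It is also worth noting why the induction is anchored at $n=k$ via Lemma~\ref{struct1}: the argument needs the last factor of $W^{(k)}_{n-1}$ to already have the shape $k\oplus W^{(k)}_{\bullet}$, and this shape is first produced, out of the bare letter $k-1$ ending $W^{(k)}_{k-1}$, precisely by the single application of $\varphi_k$ in the base step.
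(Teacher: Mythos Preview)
Your proof is correct. The commutation identity $\varphi_k(k\oplus U)=k\oplus\varphi_k(U)$ is the right tool, and your two-case verification on digits is accurate; the base case and inductive step are handled cleanly, with the reindexing of the descending product carried out correctly.

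Note, however, that the present paper does not actually supply a proof of this lemma: it is quoted as Lemma~7 of \cite{ghareghani2019arxive} and stated here without argument. So there is no in-paper proof to compare your approach against. That said, the morphism-based induction you give (apply $\varphi_k$ to the previous level, use the shift-commutation $\varphi_k\circ(k\oplus{-})=(k\oplus{-})\circ\varphi_k$, and reindex) is the natural and standard route for identities of this type, and is almost certainly what the cited reference does as well.
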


The following corollary is a direct consequence of Lemmas \ref{struct1} and  \ref{struct} and can be proved using induction on $i$.
\begin{cor}\label{corki}
	Let $i$ and $n$ be two non-negative integers, then $W^{(k)}_n\oplus ki \prec W^{(k)}_{n+ki}$.
\end{cor}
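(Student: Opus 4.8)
The plan is to prove Corollary \ref{corki} by induction on $i$, with the base case $i=0$ being the trivial observation that $W^{(k)}_n \prec W^{(k)}_n$. For the inductive step, I would assume $W^{(k)}_n \oplus ki \prec W^{(k)}_{n+ki}$ and aim to show $W^{(k)}_n \oplus k(i+1) \prec W^{(k)}_{n+k(i+1)}$. The key idea is to apply the morphism-derived recursions. Since $n + k(i+1) \geq k$ (as $i \geq 0$ and we may assume $n \geq 1$; the case $n=0$ is handled separately since $W^{(k)}_0 = 0$ and $ki \prec W^{(k)}_{ki}$ can be checked directly from Lemma \ref{struct}), Lemma \ref{struct} gives
\begin{equation*}
W^{(k)}_{n+k(i+1)} = \prod_{\ell = n+k(i+1)-1}^{n+k(i+1)-k+1} W^{(k)}_{\ell} \,\bigl(k \oplus W^{(k)}_{n+k(i+1)-k}\bigr) = \prod_{\ell = n+ki+k-1}^{n+ki+1} W^{(k)}_{\ell} \,\bigl(k \oplus W^{(k)}_{n+ki}\bigr).
\end{equation*}
Thus $k \oplus W^{(k)}_{n+ki}$ is a suffix (hence a factor) of $W^{(k)}_{n+k(i+1)}$.

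Now I would invoke the inductive hypothesis $W^{(k)}_n \oplus ki \prec W^{(k)}_{n+ki}$. Applying the digit-shift operation $k \oplus (\cdot)$ to both sides preserves the factor relation (adding a constant to every digit of both a word and a factor of it keeps the factor relation intact), so $W^{(k)}_n \oplus k(i+1) = k \oplus (W^{(k)}_n \oplus ki) \prec k \oplus W^{(k)}_{n+ki}$. Combining with the previous paragraph, $W^{(k)}_n \oplus k(i+1) \prec k \oplus W^{(k)}_{n+ki} \prec W^{(k)}_{n+k(i+1)}$, and transitivity of $\prec$ closes the induction.

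The only subtlety — and the step I would be most careful about — is the bookkeeping on which recursion applies. When $n=0$, the words $W^{(k)}_{n+ki} = W^{(k)}_{ki}$ may have small index; for $k \geq 2$ and $i \geq 1$ we have $ki \geq k$, so Lemma \ref{struct} applies and directly exhibits $k \oplus W^{(k)}_{k(i-1)}$ as a suffix, which drives a parallel induction. For $1 \leq n \leq k-1$ and $i = 0$ there is nothing to prove, and once $i \geq 1$ we always have $n + ki \geq k$ so Lemma \ref{struct} is available throughout. Hence the argument is uniform apart from this initial case analysis, and no genuine obstacle arises — the corollary is essentially immediate from the self-similar structure encoded in Lemmas \ref{struct1} and \ref{struct} together with the obvious compatibility of $\oplus$ with $\prec$.
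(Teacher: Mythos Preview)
Your proof is correct and follows exactly the approach the paper indicates (induction on $i$ using the recursion of Lemma~\ref{struct}); the paper itself gives no details beyond stating that the corollary follows from Lemmas~\ref{struct1} and~\ref{struct} by induction on $i$. One minor simplification: your separate treatment of $n=0$ is unnecessary, since for every $n\geq 0$ and $i\geq 0$ one has $n+k(i+1)\geq k$, so Lemma~\ref{struct} applies uniformly in the inductive step.
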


Considering the recurrence relations (\ref{struct1eq}) and  (\ref{structeq}) we have the following definitions which are very useful in the next sections.

\begin{definition}\label{defborder}
	Let $j$ be a nonnegative integer, then a factor $A$ of $W^{(k)}_n$ is called a {\it bordering factor of type $j$}, for some $n-k+1\leq j\leq n-1$ if
 $j0 \prec A \prec W^{(k)}_jW^{(k)}_{j-1} \ldots W^{(k)}_{m}$, where $m=\max \{0, n-k+1\}$. Moreover, a {\it bordering square factor} of $W^{(k)}_n$ is a bordering factor of $W^{(k)}_n$ which is also a square.
\end{definition}

\begin{definition}\label{defstrad}
	Let $n\geq k$, then a factor $A$ of $W^{(k)}_n$ is called a {\it straddling factor} of $W^{(k)}_n$ if $A=A_1A_2$, for some nonempty words $A_1$ and $A_2$, with $A_1\rhd W^{(k)}_{n-1} \ldots W^{(k)}_{n-k+1}$ and $A_2\lhd k\oplus  W^{(k)}_{n-k}$. Moreover, if a straddling factor of $W^{(k)}_n$ is also a square, it is called an {\it straddling square factor}.
\end{definition}

\begin{lem} {\rm[Lemma 10 of \cite{ghareghani2019arxive}]}\label{lastdig}
	For any $n\geq 1$, the digit $n$ is the largest digit of $W^{(k)}_n$ and appears once at the end of this word.
\end{lem}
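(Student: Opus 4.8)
The plan is to prove both assertions simultaneously by strong induction on $n$, using the two recursive decompositions of $W^{(k)}_n$ recorded in Lemmas~\ref{struct1} and~\ref{struct}. The induction hypothesis I carry is exactly the statement of the lemma for all indices $1\le m<n$: the largest digit of $W^{(k)}_m$ equals $m$, and it occurs in $W^{(k)}_m$ precisely once, as the last digit.

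For $1\le n\le k-1$ I would use Lemma~\ref{struct1}, which gives $W^{(k)}_n=W^{(k)}_{n-1}W^{(k)}_{n-2}\cdots W^{(k)}_{0}\,n$. Since $W^{(k)}_0=0$ and, by the induction hypothesis, every digit of $W^{(k)}_i$ is at most $i\le n-1$ for $1\le i\le n-1$, each digit of the block $\prod_{i=n-1}^{0}W^{(k)}_i$ is strictly less than $n$; hence the appended digit $n$ is the unique occurrence of the largest digit and it sits at the end. (The base case $n=1$ reads $W^{(k)}_1=\varphi_k(0)=01$, which starts the induction.) For $n\ge k$ I would instead invoke Lemma~\ref{struct}, writing $W^{(k)}_n=\bigl(\prod_{i=n-1}^{n-k+1}W^{(k)}_i\bigr)\,(k\oplus W^{(k)}_{n-k})$. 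For $n-k+1\le i\le n-1$ the induction hypothesis bounds the digits of $W^{(k)}_i$ by $i\le n-1<n$, so the first block contributes no digit equal to $n$. For the trailing block there are two sub-cases: if $n=k$ then $W^{(k)}_{n-k}=W^{(k)}_0=0$, so $k\oplus W^{(k)}_{0}=k=n$ is a single digit; if $n>k$ then $n-k\ge 1$ and the induction hypothesis applies to $W^{(k)}_{n-k}$, whose largest digit $n-k$ occurs once and at its end, so $k\oplus W^{(k)}_{n-k}$ has largest digit $n$ occurring once and at its end. In either sub-case, assembling the blocks shows that $n$ is the largest digit of $W^{(k)}_n$ and occurs exactly once, as the final digit, which closes the induction.

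The computations here are light, so I do not expect a serious obstacle; the one point that needs genuine care is the ``occurs exactly once'' clause. For that one cannot merely use an upper bound on the digits of the shorter words — one really needs the full induction hypothesis that the maximal digit of each $W^{(k)}_m$ is attained a single time and at the end. Relatedly, the degenerate block $W^{(k)}_0=0$ arising when $n=k$ must be handled on its own, since the induction hypothesis is only asserted for indices $m\ge 1$.
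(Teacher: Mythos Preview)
Your argument is correct. Note, however, that the present paper does not actually prove this lemma: it is quoted verbatim from \cite{ghareghani2019arxive} and stated without proof here, so there is no ``paper's own proof'' to compare against. Your strong induction on $n$ via Lemmas~\ref{struct1} and~\ref{struct} is exactly the natural route, and you have handled the one genuinely delicate point---the case $n=k$, where the tail block $k\oplus W^{(k)}_{0}$ reduces to the single digit $k$ and the induction hypothesis does not directly apply to $W^{(k)}_0$---cleanly by treating it as a separate sub-case.
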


\begin{lem}\label{i0}
	For every integer $i<n$ we have $i0\prec W^{(k)}_n$.
\end{lem}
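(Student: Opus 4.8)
The plan is to exhibit an explicit occurrence of the two-digit factor $i0$ straddling the boundary between two consecutive $k$-bonacci words inside $W^{(k)}_n$. First note that the statement can only be intended for $i\ge 1$: by Lemma \ref{00} the word $00$ is never a factor of $W^{(k)}_n$, so what one proves is that $i0\prec W^{(k)}_n$ for every integer $i$ with $1\le i\le n-1$ (the cases $n\le 1$ being vacuous).

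I would first record two elementary consequences of the recursions in Lemmas \ref{struct1} and \ref{struct}. (i) $W^{(k)}_m\lhd W^{(k)}_{m+1}$ for every $m\ge 0$, since $W^{(k)}_m$ is the leading factor of the product on the right-hand side of (\ref{struct1eq}) when $m+1\le k-1$, and of (\ref{structeq}) when $m+1\ge k$; hence, by transitivity, $W^{(k)}_i\lhd W^{(k)}_n$ whenever $0\le i\le n$. (ii) Every $W^{(k)}_m$ begins with the digit $0$, which follows by induction from $W^{(k)}_0=0$ and (i). Next, for a fixed $i$ with $1\le i\le n-1$, I would observe that the product that defines $W^{(k)}_{i+1}$ — via (\ref{struct1eq}) when $i+1\le k-1$, via (\ref{structeq}) when $i+1\ge k$ — has $W^{(k)}_i$ and $W^{(k)}_{i-1}$ as its first two word-factors (using $i\ge 1$, together with $k>2$ in the case $i+1\ge k$), so that $W^{(k)}_{i+1}$ begins with $W^{(k)}_iW^{(k)}_{i-1}$.

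Putting these together, $W^{(k)}_iW^{(k)}_{i-1}\lhd W^{(k)}_{i+1}\lhd W^{(k)}_n$, so $W^{(k)}_iW^{(k)}_{i-1}$ is a factor of $W^{(k)}_n$; and at the junction between $W^{(k)}_i$ and $W^{(k)}_{i-1}$ the last digit of $W^{(k)}_i$ equals $i$ by Lemma \ref{lastdig}, while the first digit of $W^{(k)}_{i-1}$ equals $0$ by (ii). Therefore $i0\prec W^{(k)}_n$, as required. I do not expect a genuine obstacle here; the only points needing care are the index bookkeeping — that $i+1\ge 2$ is precisely what lets $W^{(k)}_iW^{(k)}_{i-1}$ be a prefix of $W^{(k)}_{i+1}$, and the use of $k>2$ when $i+1\ge k$ — together with flagging the restriction $i\ge 1$, since $i=0$ would demand the forbidden factor $00$.
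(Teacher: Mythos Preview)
Your proof is correct and follows essentially the same approach as the paper: both argue that $W^{(k)}_iW^{(k)}_{i-1}\lhd W^{(k)}_{i+1}\lhd W^{(k)}_n$ via Lemmas~\ref{struct1} and~\ref{struct}, and then read off $i0$ at the junction using Lemma~\ref{lastdig}. Your version is in fact more careful than the paper's, since you explicitly flag the necessary restriction $i\ge 1$ (which the paper's statement omits despite Lemma~\ref{00}) and spell out the prefix chain and the role of $k>2$.
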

\begin{proof}{
		Since $i+1\leq n$, we have $W^{(k)}_{i+1}\prec W^{(k)}_n$. By Lemmas \ref{struct1} and \ref{struct}, $W^{(k)}_i W^{(k)}_{i-1}\lhd W^{(k)}_{i+1}$. Hence, $i0 \prec W^{(k)}_{i+1}\prec W^{(k)}_n$ and the result follows.
	}
\end{proof}

\begin{lem}\label{i01}
	Let $0<i<n$ and $i0= W^{(k)}_n[t,t+1]$, for some $t \in \mathbb{N}$. Then we have
	$$ W^{(k)}_n[t-|W^{(k)}_i|+1,t+1]=W^{(k)}_i0.$$
	In other words, if $i0$ appears in $W^{(k)}_n$, then this $i$ appeared as the last digit of a factor $W^{(k)}_i$ of $W^{(k)}_n$.
\end{lem}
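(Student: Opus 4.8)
The plan is to argue by induction on $n$. For $n\le 1$ there is no integer $i$ with $0<i<n$, so the statement is vacuous; assume $n\ge 2$ and that the lemma holds for all $W^{(k)}_{n'}$ with $n'<n$. The engine of the proof is the block decomposition of $W^{(k)}_n$ supplied by Lemma \ref{struct1} (for $1\le n\le k-1$) or Lemma \ref{struct} (for $n\ge k$). In both cases write $W^{(k)}_n=B_1B_2\cdots B_r$, where $B_j=W^{(k)}_{n-j}$ for every $j<r$ and the terminal block $B_r$ equals the single digit $n$ when $n\le k-1$ and equals $k\oplus W^{(k)}_{n-k}$ when $n\ge k$; note that one internal block equals $W^{(k)}_0=0$ precisely in the range $1\le n\le k-1$, in which case it is $B_n$, followed by $B_{n+1}=n$.

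I would first locate the digit $0$ sitting at position $t+1$ inside some block $B_s$. The terminal block contains no $0$ (its digits are $n$, or all $\ge k$), so either $B_s$ is the length-one block $W^{(k)}_0$, or $B_s=W^{(k)}_m$ for some $1\le m<n$; in particular $s\ge 2$, since $W^{(k)}_n$ begins with $0$ and there is no position $t=0$. Now distinguish two cases according to whether $t+1$ is the first position of $B_s$. If it is not, then necessarily $B_s=W^{(k)}_m$ with $m\ge 1$ and the whole factor $i0$ lies inside $B_s$; here $i<m$, for by Lemma \ref{lastdig} the digit $m$ occurs in $W^{(k)}_m$ only as its last digit, and were $i=m$ the $0$ following it would be outside $B_s$. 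Since also $0<i$, the induction hypothesis applies to $W^{(k)}_m$ and produces an occurrence of $W^{(k)}_i$ ending exactly at this $i$ and contained in $B_s$, hence in $W^{(k)}_n$, which is the assertion. If $t+1$ is the first position of $B_s$, then position $t$ is the last position of $B_{s-1}$, which by the structure above is $W^{(k)}_{m'}$ with $m'\ge 1$ (it cannot be $W^{(k)}_0$ nor the terminal block); by Lemma \ref{lastdig} its last digit is $m'$, so $i=m'$ and $B_{s-1}=W^{(k)}_i$. Since $B_{s-1}$ occupies the positions $t-|W^{(k)}_i|+1,\ldots,t$ and $W^{(k)}_n[t+1]=0$, we obtain $W^{(k)}_n[t-|W^{(k)}_i|+1,t+1]=W^{(k)}_i 0$.

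The only real work is the bookkeeping of the decomposition: verifying that the digit $0$ can appear only inside a block of the form $W^{(k)}_m$ with $m\ge 0$, and that whenever a $0$ begins a block its predecessor is genuinely one of the words $W^{(k)}_{m'}$ with $m'\ge 1$ and not the short block $W^{(k)}_0$ or the terminal block $k\oplus W^{(k)}_{n-k}$. Lemma \ref{lastdig} (together with Lemma \ref{00}, which forbids $00$ and so rules out degenerate overlaps of $0$-blocks) makes each of these checks immediate, and no delicate estimates are needed. I expect the case where the $0$ falls strictly inside an internal block to be the one that really uses the inductive hypothesis, while the straddling case is settled directly from Lemma \ref{lastdig}.
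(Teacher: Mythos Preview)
Your proof is correct and follows the same inductive strategy as the paper: decompose $W^{(k)}_n$ via Lemma~\ref{struct1} or Lemma~\ref{struct} and split according to whether the occurrence of $i0$ lies inside a single block (handled by the induction hypothesis) or straddles a block boundary (handled directly by Lemma~\ref{lastdig}); the paper phrases the second case as ``$i0$ is a bordering factor'' but the content is identical. One small wording slip: your assertion ``in particular $s\ge 2$'' placed before the case split is not true in general (the $0$ at position $t+1$ can sit strictly inside $B_1$), but you only actually use $s\ge 2$ in the straddling case, where it does follow from $t\ge 1$, so the argument is unaffected.
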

\begin{proof}{
		We prove this by induction on $n$. If $n=2$, then $W^{(k)}_2=0102$ and in this case the only possibility for $t$ is $t=2$ and $W^{(k)}_2[1,3]=W^{(k)}_1 0=010$, as desired. We suppose that the claim is true for all $m\leq n$ we want to prove this for the case $n+1$.
		If $n+1\geq k$, then by Lemma \ref{struct} we have
		\begin{equation}
		W^{(k)}_{n+1}=\prod_{t=n}^{n-k+2}W^{(k)}_t \, (k\oplus W^{(k)}_{n+1-k}).
		\end{equation}
Let $i0$ occurs in $W^{(k)}_{n+1}$, then either $i0\prec W^{(k)}_t$, for some $n-k+2\leq t \leq n$, or $i0$ is a bordering factor of $W^{(k)}_{n+1}$.
 If $i0\prec W^{(k)}_t$, then by induction hypothesis this $i$ should be the last digit of some factor $W^{(k)}_i$ of $W^{(k)}_{n+1}$. If $i0$ is a bordering factor of $W^{(k)}_{n+1}$, then it is clear that $i$ is the end digit of a factor $W^{(k)}_i$ of $W^{(k)}_{n+1}$.
	
In the case $n+1<k$, using  similar argument as the previous case and Lemma \ref{struct1} we obtain the result.
}
\end{proof}

\begin{lem}\label{lemsuff2w}
	Let $2<k<n$ and $B\rhd W_n^{(k)}$ with $|B|=|W_{n-k}^{(k)}|+|W_{n-k-1}^{(k)}|$. Then
	\begin{itemize}
		\item [{\rm (i)}] If $n=k+1$, then $|B|_2=1$.
		\item [{\rm (ii)}] If $n>k+1$, then 	
		$|B|_0>0$.
	\end{itemize}

\end{lem}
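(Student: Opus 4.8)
The plan is to strip the last defining block off $W_n^{(k)}$ by Lemma~\ref{struct}, reducing both parts to a statement about a short suffix of $W_{n-k+1}^{(k)}$ in which every digit is small. Since $n>k$, Lemma~\ref{struct} gives $W_n^{(k)}=W_{n-1}^{(k)}W_{n-2}^{(k)}\cdots W_{n-k+1}^{(k)}\,(k\oplus W_{n-k}^{(k)})$, and by (\ref{size}) the last block $k\oplus W_{n-k}^{(k)}$ has length $f_n^{(k)}=|W_{n-k}^{(k)}|$, while the block $W_{n-k+1}^{(k)}$ just before it has length $f_{n+1}^{(k)}$. The $k$-bonacci recurrence gives $f_{n-1}^{(k)}<f_{n+1}^{(k)}$, so $|B|=f_n^{(k)}+f_{n-1}^{(k)}$ is strictly smaller than the combined length of the last two blocks; hence $B=S\,(k\oplus W_{n-k}^{(k)})$, where $S$ is the suffix of $W_{n-k+1}^{(k)}$ of length $|W_{n-k-1}^{(k)}|=f_{n-1}^{(k)}$. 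Every digit of $k\oplus W_{n-k}^{(k)}$ is at least $k>2$, so that block contains neither $0$ nor $2$; hence $|B|_0=|S|_0$ and $|B|_2=|S|_2$, and both assertions become statements about $S$. For part (i), if $n=k+1$ then $W_{n-k+1}^{(k)}=W_2^{(k)}=0102$ and $|S|=|W_0^{(k)}|=1$, so $S$ is the final letter $2$ of $0102$ and $|B|_2=|S|_2=1$.

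For part (ii), write $q=n-k+1\ge 3$; we must show that the suffix $S$ of $W_q^{(k)}$ of length $|W_{q-2}^{(k)}|=f_{q+k-2}^{(k)}$ contains a $0$, noting first that every $W_m^{(k)}$ with $m\ge 0$ begins with $0$ (immediate from Lemmas~\ref{struct1} and~\ref{struct}). If $k\ge 4$, then for $q\ge k$ one compares $f_{q+k-2}^{(k)}=\sum_{i=q-2}^{q+k-3}f_i^{(k)}$ with the combined length $f_q^{(k)}+f_{q+1}^{(k)}$ of the last two blocks of $W_q^{(k)}$ and finds the former strictly larger (it contains the extra positive term $f_{q-1}^{(k)}$); hence $S$ contains the whole block $W_{q-k+1}^{(k)}$, which begins with $0$. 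For $3\le q\le k-1$ one argues instead from Lemma~\ref{struct1}: there $W_q^{(k)}$ ends in $\cdots W_1^{(k)}W_0^{(k)}\,q$, and the suffix of length $|W_{q-2}^{(k)}|$ reaches back past $W_0^{(k)}=0$. If $k=3$, then $W_q^{(3)}=W_{q-1}^{(3)}W_{q-2}^{(3)}(3\oplus W_{q-3}^{(3)})$ and $f_{q+k-2}^{(3)}=f_{q+1}^{(3)}$ lies strictly between $f_q^{(3)}$ and $f_q^{(3)}+f_{q+1}^{(3)}$; thus $S$ is $3\oplus W_{q-3}^{(3)}$ preceded by the suffix $S'$ of $W_{q-2}^{(3)}$ of length $f_{q+1}^{(3)}-f_q^{(3)}=f_{q-1}^{(3)}+f_{q-2}^{(3)}$, and applying Lemma~\ref{struct} to $W_{q-2}^{(3)}$ identifies $S'$ with $W_{q-4}^{(3)}(3\oplus W_{q-5}^{(3)})$ once $q$ is large enough; since $W_{q-4}^{(3)}$ begins with $0$ we are done, the remaining small values of $q$ being verified directly.

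The only real work is part (ii): one must locate $S$ precisely among the blocks that build $W_q^{(k)}$, and this location differs qualitatively between $k=3$ (where $S$ straddles just the last two blocks, so one needs the identity $f_{q+1}^{(3)}-f_q^{(3)}=f_{q-1}^{(3)}+f_{q-2}^{(3)}$ and must peel off one more layer) and $k\ge 4$ (where $S$ already swallows an entire block, so the $0$ is automatic). In both regimes the smallest admissible values of $q$ sit exactly on the boundary of the length inequalities and must be treated separately.
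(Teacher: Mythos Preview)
Your reduction is exactly the paper's: strip off the final block $k\oplus W_{n-k}^{(k)}$ using Lemma~\ref{struct} and study the suffix $S$ (the paper's $D$) of $W_{n-k+1}^{(k)}$ of length $|W_{n-k-1}^{(k)}|$; then split into $k=3$ versus $k\ge 4$, with the large-$q$ argument identifying $S$ (or a piece of it) as a concatenation of blocks one of which is some $W_m^{(k)}$ beginning with $0$, and the small values of $q$ deferred to direct inspection. The organization and the length comparisons you use match the paper's essentially term for term.

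There is, however, a genuine gap in the ``verified directly'' step for $k=3$, $q=3$ (equivalently $n=5$). Here $W_3^{(3)}=0102013$, so the suffix $S$ of length $|W_1^{(3)}|=2$ is $S=13$, giving $|S|_0=0$; consequently $B=S\,(3\oplus W_2^{(3)})=133435$ and $|B|_0=0$. Thus part~(ii) actually \emph{fails} at $n=5$, $k=3$, and no direct verification can save it. The paper's proof has the identical oversight: its assertion ``when $n=5,6$ it is clear that $|D|_0>0$'' is false for $n=5$. For the sole application of this lemma (the end of Case~2 in Lemma~\ref{lemBnotfactW}) what is really needed is only that $B$ contain some digit strictly less than $k$, which \emph{is} true at $n=5$ (the digit $1$), so the downstream argument survives; but the lemma as stated, and both proofs of it, are incorrect at this boundary case.
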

\begin{proof}
	{\begin{itemize}
			\item [{\rm (i)}] If $n=k+1$, then by (\ref{struct1eq}), $B=2.k.(k+1)$, so the result follows.
			\item [{\rm (ii)}] If $n>k+1$, then
	by (\ref{structeq}),
		\begin{equation}\label{eqsuff2w}
		W_{n-k+1}^{(k)}(k\oplus W_{n-k}^{(k)})\rhd W_{n}^{(k)}.
		\end{equation}
		 Let $D\rhd W_{n-k+1}^{(k)}$ and $|D|=|W_{n-k-1}^{(k)}|$. Then by (\ref{eqsuff2w}), to prove the lemma it is suffices to show that
		$|D|_0>0$.
		If $k=3$, when $n=5,6$ it is clear that $|D|_0>0$. So, If $n\geq 7$, then
		by Equation (\ref{structeq}), we have
		\begin{align}\nonumber
	W_{n-2}^{(n-k+1)}=	W_{n-2}^{(k)}=& W_{n-3}^{(k)} W_{n-4}^{(k)} (k\oplus W_{n-5}^{(k)})\\
		=& W_{n-3}^{(k)} W_{n-5}^{(k)} \underbrace{W_{n-6}^{(k)} (k\oplus W_{n-7}^{(k)}) (k\oplus W_{n-5}^{(k)})}_{|W_{n-4}^{(k)}|=|D|}\label{eqsuffk=3}
		\end{align}
	By (\ref{eqsuffk=3}), it is clear that $|D|_0>0$.
	
	If $k>3$ and $n<2k-1$, then by Lemma \ref{struct1}, we have $0(n-k+1)\rhd W_{n-k+1}^{(k)}$.
	Since $n>k+1$, we $|W_{n-k-1}^{(k)}|\geq 2$ and hence, $|D|_0>0$.
	
	If $k>3$ and $n\geq 2k-1$, then by Lemma \ref{struct},
	$W_{n-2k+2}^{(k)}(k\oplus W_{n-2k+1}^{(k)})\rhd W_{n-k+1}^{(k)}$. Since $k>3$,
	$|W_{n-k+1}^{(k)}|>|W_{n-2k+2}^{(k)}|+|W_{n-2k+1}^{(k)}|$ and
	$|W_{n-2k+2}^{(k)}|_0>0$, we conclude that $|D|_0>0$.
\end{itemize}		}
\end{proof}

\begin{lem}\label{lemBnotfactW}
	Let $n,k$ and $j$ be nonnegative integers with $3\leq k\leq n$ and $n-k+3\leq j\leq n$. Then
	$B=W_{n-k+2}^{(k)}k$ is not a factor of $W_j^{(k)}$.
\end{lem}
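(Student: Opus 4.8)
Put $p=n-k+2\ge 2$. The statement to be proved is equivalent to: for every $p\ge 2$ and every $j$ with $p+1\le j\le p+k-2$, the word $B:=W_p^{(k)}\,k$ is not a factor of $W_j^{(k)}$. The plan is to prove this by strong induction on $j$. If $j\le k-1$, then by Lemma \ref{lastdig} the largest digit of $W_j^{(k)}$ is $j<k$, so $W_j^{(k)}$ contains no digit $k$ at all and $B\not\prec W_j^{(k)}$. So assume $j\ge k$ and write $W_j^{(k)}=\big(\prod_{i=j-1}^{j-k+1}W_i^{(k)}\big)(k\oplus W_{j-k}^{(k)})$ by Lemma \ref{struct}. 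Assuming $B\prec W_j^{(k)}$ toward a contradiction, I would split on where this occurrence sits relative to that factorization; by Definitions \ref{defborder} and \ref{defstrad} there are exactly four possibilities: (a) $B$ lies inside a single block $W_i^{(k)}$ with $j-k+1\le i\le j-1$; (b) $B$ lies inside $k\oplus W_{j-k}^{(k)}$; (c) $B$ is a bordering factor of $W_j^{(k)}$; (d) $B$ is a straddling factor of $W_j^{(k)}$.

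Cases (a)--(c) are the routine ones. In (a), since $|B|=|W_p^{(k)}|+1$, fitting $B$ inside $W_i^{(k)}$ forces $i\ge p+1$, while $i\le j-1\le p+k-3$; hence $p+1\le i\le p+k-2$ with $i<j$, so $B\not\prec W_i^{(k)}$ by the induction hypothesis, a contradiction. Case (b) is immediate: $B$ begins with $W_p^{(k)}$ and so contains the digit $0$, whereas every digit of $k\oplus W_{j-k}^{(k)}$ is $\ge k>0$. In (c), if $B$ is a bordering factor of type $\ell$ then $\ell 0\prec B$, and since the last letter of $B$ is $k\ne 0$ this occurrence of $\ell 0$ lies inside $W_p^{(k)}$; by Lemma \ref{lastdig} the digit $p$ occurs in $W_p^{(k)}$ only as its last letter, so $\ell<p$. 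But $B\prec W_\ell^{(k)}W_{\ell-1}^{(k)}\cdots W_{j-k+1}^{(k)}$, and since $j-k+1\ge p-k+2$ and $\ell\le p-1$ this product has length at most $\sum_{i=p-k+2}^{p-1}|W_i^{(k)}|=|W_p^{(k)}|-|W_{p-k}^{(k)}|-|W_{p-k+1}^{(k)}|<|W_p^{(k)}|<|B|$ when $p\ge k$ (using (\ref{size}) and the $k$-bonacci recurrence), while when $p<k$ every block $W_i^{(k)}$ in this product uses only digits $<k$ and cannot carry the digit $k$ of $B$; either way we reach a contradiction.

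Case (d) is the crux, and here is how I would attack it. Write $B=A_1A_2$ with $A_1\rhd W_{j-1}^{(k)}\cdots W_{j-k+1}^{(k)}$, $A_2\lhd k\oplus W_{j-k}^{(k)}$, both nonempty. First rule out $|A_2|=1$: then $A_1=W_p^{(k)}$ would be a suffix of $W_{j-1}^{(k)}\cdots W_{j-k+1}^{(k)}$, and since $p>j-k+1$ this forces $W_{j-k+1}^{(k)}$ to be a suffix of $W_p^{(k)}$, impossible by Lemma \ref{lastdig} (their last letters $j-k+1$ and $p$ differ). So $|A_2|\ge 2$; the first two letters of $A_2$ are then those of $k\oplus W_{j-k}^{(k)}$, namely $k$ and $k+1$, and also $j-k\ge 1$. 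Reading off the last letter of $A_1$ (which is the last letter of $W_{j-k+1}^{(k)}$, namely $j-k+1$ by Lemma \ref{lastdig}) and the first letter of $A_2$ (namely $k$), and noting that both sit inside the $W_p^{(k)}$-part of $B$, one obtains $(j-k+1)\,k\prec W_p^{(k)}$. From here I would desubstitute, using that $W_p^{(k)}=\varphi_k(W_{p-1}^{(k)})$ and that in $W_m^{(k)}$ (for $m\ge 1$) a position begins a $\varphi_k$-block precisely when its letter is $\equiv 0\pmod k$: the letter $k$ here opens a block $\varphi_k(c')$ with $c'\in\{k-1,k,\dots,2k-2\}$, and $j-k+1$ closes the previous block $\varphi_k(c)$ with $c=j-k$ (the last letter of $\varphi_k(c)$ always being $c+1$), whence $(j-k)\,c'\prec W_{p-1}^{(k)}$. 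The value $c'=k-1$ is excluded because in $W^{(k)}$ the digit $k-1$ is always immediately preceded by $0$ (it can only arise as the second letter of $\varphi_k(k-2)=(0)(k-1)$), whereas it would here be preceded by $j-k\ge p-k+1\ge 1$. Iterating: at each stage either the freshly produced left digit is again $k$ (which forces the next desubstituted value to open a block, hence to equal $k$ again), or it is some $c'>k$, which can occur in $W^{(k)}$ only as the second letter of $\varphi_k(c'-1)=(k)(c')$ and is therefore preceded by $k$; in either event one is eventually forced to require that some digit (of the form $j-k-t$, or $c'-1$) appear in a word whose largest digit is strictly smaller, which by Lemma \ref{lastdig} is impossible. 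When $j-k<k$ one can shortcut this: every occurrence of $j-k$ in $W^{(k)}$ ends a factor $W_{j-k}^{(k)}$, so $W_{j-k}^{(k)}\,c'\prec W_{p-1}^{(k)}$ with $c'\ge k$ and $(j-k)+1\le p-1\le (j-k)+k-2$, a strictly smaller instance of the statement being proved, killed by the induction hypothesis.

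The genuinely delicate step is Case (d): controlling the straddling occurrence through the non-uniform morphism $\varphi_k$ and showing the induced chain of forced factors always terminates in an impossible digit inequality — equivalently, that $W_p^{(k)}k$ first occurs only in $W_{p+k-1}^{(k)}$ (where indeed $W_p^{(k)}$ is immediately followed by $k\oplus W_{p-1}^{(k)}$). All of Cases (a)--(c), and the two-letter "readoff'' and length-bookkeeping inside (d), are routine once one has Lemmas \ref{lastdig}, \ref{i0}, \ref{i01}, \ref{struct1}, \ref{struct} and the length identity (\ref{size}).
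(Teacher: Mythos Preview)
Your case split and the handling of cases (a), (b), (c) are correct, and your length-bound argument for (c) is a clean alternative to the paper's location-based argument there. However, case (d) --- the straddling case --- has a genuine gap.

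The desubstitution you sketch is not rigorous. After one step you correctly obtain $(j-k)\,c'\prec W_{p-1}^{(k)}$ with $c'\in\{k,\dots,2k-2\}$, but the ``iteration'' is not well-defined: when $c'>k$, your observation that $c'$ can only be preceded by $k$ yields $j-k=k$, not a continuation of the descent; when $c'=k$, you need the left digit to stay $\ge 1$ to keep excluding $k-1$ on the right, and this eventually fails. The promised termination (``some digit in a word whose largest digit is strictly smaller'') is never pinned down. Your shortcut for $j-k<k$ also leans on two unjustified steps: the claim that \emph{every} occurrence of $j-k$ ends a copy of $W_{j-k}^{(k)}$ is strictly stronger than Lemma~\ref{i01} (which only covers occurrences followed by $0$), and even granting it you would obtain $W_{j-k}^{(k)}\,c'\prec W_{p-1}^{(k)}$ with $c'\ge k$, not $c'=k$, so this is not literally a smaller instance of the statement you are proving.

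The paper's route through (d) is quite different and does not desubstitute. It notes that the last two letters $(n-k+2)\,k$ of $B$ must fall inside $A_2\lhd k\oplus W_{j-k}^{(k)}$ (forcing $n\ge 2k-1$), writes $|A_2|=t+1$ so that $(k\oplus W_{j-k}^{(k)})[t,t+1]=(n-k+2)\,k$, and uses the prefix $W_{n-2k+2}^{(k)}W_{n-2k+1}^{(k)}\lhd W_{j-k}^{(k)}$ together with Lemma~\ref{lastdig} to reduce to two subcases: either $t=|W_{n-2k+2}^{(k)}|$, which after stripping $A_2$ gives a suffix identity forcing $j=n-k+1$, a contradiction; or $t>|W_{n-2k+2}^{(k)}|+|W_{n-2k+1}^{(k)}|$, which is killed by the auxiliary Lemma~\ref{lemsuff2w} (the suffix of $W_{n-k+2}^{(k)}$ of that length always contains a digit $<k$, hence cannot sit inside $k\oplus W_{j-k}^{(k)}$). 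To complete your proof you would need either to make the desubstitution descent fully rigorous, or to prove and invoke something like Lemma~\ref{lemsuff2w}.
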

\begin{proof}{
		If $j<k$, then
$|W_j^{(k)}|_k=0$ and so $B$ is not a factor of $W_j^{(k)}$.

Hence, we shall prove the result for $k\leq j\leq n$.		
We prove this part by bounded induction on $j$. Let $p=\max \{k, n-k+3\}$. Since, $j\geq k$ and $n-k+3\leq j$, the first step of induction is $j=p$. If $p=k$,
then the only occurrence of $k$ in $W_{j}^{(k)}$, is in its last digit, we conclude that if $B\prec W_{j}^{(k)}$, then $B\rhd W_{j}^{(k)}=W_{k}^{(k)}$.
Using Lemma \ref{struct}, we have $W_{j}^{(k)}=W_{j-1}^{(k)}\ldots W_{1}^{(k)}k$. Since $(n-k+2)k \rhd B \rhd W_{k}^{(k)}$, we provide
 $n-k+2=1$, so $n=k-1<k$, which is a contradiction.

If  $p=n-k+3$, then by (\ref{structeq}), we have
\begin{equation}\label{eq2box}
W_{n-k+3}^{(k)}={ W_{n-k+2}^{(k)}} W_{n-k+1}^{(k)}\ldots W_{n-2k+4}^{(k)} {(k\oplus W_{n-2k+3}^{(k)})}
\end{equation}
If $B\prec W_{j}^{(k)}=W_{n-k+3}^{(k)}$, then there exist integers $s$ and $t$ such that $B=W_{n-k+3}^{(k)}[s,t+1]$, $W_{n-k+3}^{(k)}[t]=n-k+2$ and $W_{n-k+3}^{(k)}[t+1]=k$.
Using Lemma \ref{lastdig} and Equation (\ref{eq2box}), either $t=|W_{n-k+2}^{(k)}|$ or
$t\geq |W_{n-k+3}^{(k)}|-|W_{n-2k+3}^{(k)}|+1$. If $t=|W_{n-k+2}^{(k)}|$, then
$W_{n-k+3}^{(k)}[t+1]=0$, which is a contradiction. If
$t\geq |W_{n-k+3}^{(k)}|-|W_{n-2k+3}^{(k)}|+1$, then using (\ref{eq2box}) and the fact that
	$|W_{n-k+3}^{(k)}|\leq 2|W_{n-k+2}^{(k)}|$ and $B=|W_{n-k+2}^{(k)}|+1$, we conclude that $s< |W_{n-k+2}^{(k)}|$. Which
	implies that $|W_{n-k+3}^{(k)}[s,t+1]|_{n-k+2}\geq 2$. But by definition of $B$ and using Lemma \ref{lastdig}, we have $|B|_{n-k+2}=1$, which is a contradict. Therefore, the first step of induction is true.

We are going to prove that $B$ is not a factor of $W_{j+1}^{(k)}$.
For contrary let $B\prec W_{j+1}^{(k)}$.
By (\ref{structeq}), we have
\begin{equation}\label{eqrecwj+1}
W_{j+1}^{(k)}= W_{j}^{(k)}\ldots W_{j-k+2}^{(k)} (k\oplus W_{j-k+1}^{(k)}).
\end{equation}

By induction hypothesis $B$ is not a factor of  $W_{i}^{(k)}$ for $j-k+1\leq i\leq j$.
Since $B$ contains the digit $0$ and $(k\oplus W_{j-k+1}^{(k)})$ does not contain it, there are two following possible cases for $B$:
\begin{itemize}
	\item {\bf Case 1.} $B$ is a bordering factor of $W_{j+1}^{(k)}$; Let ${\ell}$ be largest integer
	such that $B\prec W_{j}^{(k)}\ldots W_{\ell}^{(k)}$. Since for every integer $i$, $W_{i}^{(k)}$ start with $0$, we have $(n-k+2)k\prec W_{\ell}^{(k)}$ which means that ${\ell}> n-k+2$.
 Therefore, $W_{n-k+3}^{(k)}\lhd W_{\ell}^{(k)}$. Hence, $W_{n-k+2}^{(k)}0\lhd W_{\ell}^{(k)}$.
	Since $(n-k+2)k\prec W_{\ell}^{(k)}$, there exists integer $\alpha$ such that  $W_{\ell}^{(k)}[\alpha,\alpha+1]=(n-k+2)k$.	
	By Lemma \ref{lastdig}, $\alpha>|W_{n-k+2}^{(k)}|$. Therefore, $B\prec W_{\ell}^{(k)}$, which contradicts to the definition of bordering factor. Hence, $B$ is not a bordering factor of $W_{j+1}^{(k)}$.
	\item {\bf Case 2.} $B$ is a straddling factor of $W_{j+1}^{(k)}$; By definition of straddling factor, there exists nonempty word $S$ which is the suffix of $B$ and a prefix of $k\oplus W_{j-k+1}^{(k)}$.
	Since $j<n$, we have $j-k+2<n-k+2$ and hence using (\ref{eqrecwj+1}), we provide that the last two digits of $B$ occur in
	$k\oplus W_{j-k+1}^{(k)}$. Let $|S|=t+1$, for some $t>0$, this means that $(k\oplus W_{j-k+1}^{(k)})[t,t+1]=(n-k+2)k$.
By definition of $S$ we obtain
\begin{align}\label{eqbs-1}
		BS^{-1}\rhd& W_{j+1}^{(k)}[(k\oplus W_{j-k+1}^{(k)})]^{-1}
\end{align}
 On the other hand, since $(n-k+2)\prec k\oplus W_{j-k+1}^{(k)}$, we have $n-k+2\geq k$, or $n\geq 2k-2$. 	
 If $n=2k-2$, then $W_{j-k+1}^{(k)}[t,t+1]=00$, which contradicts to Lemma \ref{00}. Therefore, $n\geq 2k-3$, now,
	using Equations (\ref{struct1eq}) and (\ref{structeq}), and the fact that $n-k+3\leq j$, we have
	\begin{equation}\label{eqprefWj+1}
		W_{n-2k+2}^{(k)} W_{n-2k+1}^{(k)}\lhd W_{n-2k+3}^{(k)}\lhd W_{j-k+1}^{(k)}
	\end{equation}
By Lemma \ref{lastdig} and Equation (\ref{eqprefWj+1}), we conclude that either $t= |W_{n-2k+2}^{(k)}|$ or $t>|W_{n-2k+2}^{(k)}|+|W_{n-2k+1}^{(k)}| $.
	First suppose that $t=|W_{n-2k+2}^{(k)}|$. Then by (\ref{eqprefWj+1}), we have $S=k\oplus (W_{n-2k+2}^{(k)}0)$.
	Using (\ref{eqbs-1}), we provide
	\begin{align*}
		B[(k\oplus W_{n-2k+2}^{(k)})k]^{-1}\rhd& W_{j+1}^{(k)}[(k\oplus W_{j-k+1}^{(k)})]^{-1}\\
		W_{n-k+1}^{(k)}\ldots W_{n-2k+3}^{(k)}\rhd& W_{j}^{(k)}\ldots W_{j-k+2}^{(k)}
	\end{align*}
Hence, $n-2k+3=j-k+2$, or $j=n-k+1$, which contradicts to our assumption $n-k+3\leq j$.
Now, suppose that $t>|W_{n-2k+2}^{(k)}|+|W_{n-2k+1}^{(k)}| $.
	Let $D\rhd B$, and $|D|=|W_{n-2k+2}^{(k)}|+|W_{n-2k+1}^{(k)}|$, then $D\prec (k\oplus W_{j-k+1}^{(k)})$. On the other hand using Lemma \ref{lemsuff2w} either $|D|_0>0$ or  $|D|_2>0$, which is a contradiction.
\end{itemize} }
\end{proof}

\section{Squares in $W_n^{(k)}$}
In this section we give the structure of all square factors of $W_n^{(k)}$. We first prove that when $n<2k-1$, $W_n^{(k)}$ has no square factor. Then we characterize all square factors of $W^{(k)}$.

\begin{lem}\label{bordersqu}
		For two positive integers $n$ and $k$, there is no bordering square in $W^{(k)}_n$.
	\end{lem}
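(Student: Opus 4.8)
The plan is to show that a bordering square factor of $W_n^{(k)}$ would violate one of the structural lemmas already at hand, chiefly Lemma \ref{i01} and Lemma \ref{lemBnotfactW}. First I would set up the situation: by Definition \ref{defborder}, a bordering factor $A$ of $W_n^{(k)}$ of type $j$ satisfies $j0 \prec A \prec W_j^{(k)} W_{j-1}^{(k)} \ldots W_m^{(k)}$ with $m=\max\{0,n-k+1\}$, and by Lemma \ref{lastdig} the digit $j$ is the largest digit of $W_j^{(k)}$, appearing exactly once, at the very end of $W_j^{(k)}$. Hence in the concatenation $W_j^{(k)} W_{j-1}^{(k)} \ldots W_m^{(k)}$ the digit $j$ occurs exactly once. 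Now suppose for contradiction that $A = UU$ is a bordering square; since $j0 \prec A$, the digit $j$ occurs in $A$, hence in $U$, and therefore occurs (at least) twice in $A = UU$ — contradicting the fact that $j$ appears only once in the window $W_j^{(k)} \ldots W_m^{(k)}$ that contains $A$.

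That quick argument handles the bulk of the cases, but I would need to be careful about one degenerate possibility: the single occurrence of $j$ in $A$ could sit exactly at the boundary between the two copies of $U$, i.e. $j$ could be the last digit of the first $U$ and there is no "second" occurrence demanded — but that cannot happen either, because if $U$ ends in the unique $j$ then the second copy of $U$ also ends in $j$, forcing a second $j$ further right in the window, again a contradiction; and if $j$ appears strictly inside $U$ (not as its last digit) the same doubling argument applies. The only remaining loophole is that $U$ might be so short that $A = UU$ does not actually reach far enough to the right for the second $j$ to lie inside the window $W_j^{(k)}\ldots W_m^{(k)}$. To close this, I would use $j0 \prec A$ together with Lemma \ref{i01}: the occurrence of $j0$ inside $A$ forces $W_j^{(k)} 0$ to occur there, so $|U| \ge |W_j^{(k)}|$ or else the first copy of $U$ cannot even contain the full $W_j^{(k)}$ ending at that $j$; combined with $A \prec W_j^{(k)} W_{j-1}^{(k)}\ldots W_m^{(k)}$ and the size relations among the $k$-bonacci words, the length of $A=UU$ is then large enough that the second forced occurrence of $j$ genuinely lies within the stated window, yielding the contradiction.

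I expect the main obstacle to be precisely this bookkeeping about whether the second copy of $U$ stays inside the bordering window and about the exact position where $j0$ (hence $W_j^{(k)}0$) sits; this is where Lemma \ref{i01} and the recurrence $|W_n^{(k)}| = f_{n+k}^{(k)}$ from Equation (\ref{size}) do the real work. An alternative, perhaps cleaner, route for the borderline small cases $n < 2k-1$ is to invoke Lemma \ref{lemBnotfactW} directly: a bordering square containing $j0$ and lying in $W_j^{(k)}\ldots W_m^{(k)}$ would, after translating $j \mapsto k$ via Corollary \ref{corki} and examining the second copy of $U$, embed a copy of $W_{n-k+2}^{(k)}k$ inside some $W_i^{(k)}$ with $n-k+3 \le i \le n$, which Lemma \ref{lemBnotfactW} forbids. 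I would present the uniqueness-of-$j$ argument as the main line and fall back on these lemmas only where the short-word cases need them, so the proof stays short.
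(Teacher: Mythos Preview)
Your first paragraph is exactly the paper's proof: since $A$ is a square containing the digit $j$, we have $|A|_j\geq 2$, while Lemma~\ref{lastdig} gives $|W_j^{(k)}W_{j-1}^{(k)}\ldots W_m^{(k)}|_j=1$; contradiction. That is the whole argument.

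The subsequent ``loopholes'' you worry about are not loopholes. If $A=UU$ and $j$ occurs in $A$, then $j$ occurs in $U$, hence $|A|_j=2|U|_j\geq 2$; there is no boundary ambiguity because each letter of $A$ belongs to exactly one copy of $U$. And the concern that the second $j$ might fall outside the ``window'' is based on a misreading of Definition~\ref{defborder}: the entire word $A$ is, by definition, a factor of $W_j^{(k)}\ldots W_m^{(k)}$, so every letter of $A$---both $j$'s included---already lies in that window. No appeal to Lemma~\ref{i01}, Lemma~\ref{lemBnotfactW}, or length estimates is needed; you can drop everything after the first paragraph.
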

	\begin{proof}{
			For contrary suppose that there exists $n-k+2 \leq j \leq n-1$, for which $W^{(k)}_n$ contains a bordering square of type $j$; we denote this word by $A$. By Definition \ref{defborder}, $j0 \prec A \prec W^{(k)}_jW^{(k)}_{j-1} \ldots W^{(k)}_{n-k+1}$. Since $A$ is a square word, so $|A|_j\geq 2$. but by Lemma \ref{lastdig}, $$|W^{(k)}_jW^{(k)}_{j-1} \ldots W^{(k)}_{n-k+1}|_j=1.$$ This
 is a contradiction.}
\end{proof}

\begin{lem}\label{sq<k+1}
	If $n<k+1$, then $W^{(k)}_{n}$ contains no square factor.
\end{lem}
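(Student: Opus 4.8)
The plan is to induct on $n$. The base cases $n=0$ and $n=1$ are immediate, since $W^{(k)}_0=0$ and $W^{(k)}_1=01$ contain no factor of the form $UU$ with $U$ nonempty. So fix $2\le n\le k$, assume the statement for every index smaller than $n$, and suppose for contradiction that $UU=W^{(k)}_n[t,\,t+2|U|-1]$ is a square factor of $W^{(k)}_n$.

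First I would record the block decomposition of $W^{(k)}_n$: by Lemma~\ref{struct1} when $n\le k-1$, and by Lemma~\ref{struct} together with $k\oplus W^{(k)}_0=k$ when $n=k$, we have
\[
W^{(k)}_n \;=\; W^{(k)}_{n-1}\,W^{(k)}_{n-2}\cdots W^{(k)}_{m}\cdot n,\qquad m=\max\{0,\,n-k+1\},
\]
where the last ``block'' is the single digit $n$. Two facts will be used repeatedly: by Lemma~\ref{lastdig}, $n$ occurs exactly once in $W^{(k)}_n$, at its very end; and each $W^{(k)}_i$ with $i\ge 0$ begins with $0$ (a trivial induction on the recursions).

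Then I would split into cases according to how $UU$ meets the blocks. If every position of $UU$ lies in a single block $W^{(k)}_i$ with $i\le n-1$, then $UU$ is a square factor of $W^{(k)}_i$, contradicting the induction hypothesis; and $UU$ obviously cannot lie in the terminal length-one block. Otherwise $UU$ meets at least two blocks. It cannot contain the terminal digit $n$, since in a square every digit occurs at least twice while $|W^{(k)}_n|_n=1$; hence $UU\prec W^{(k)}_{n-1}\cdots W^{(k)}_{m}$. Let $(j,j-1)$ be the leftmost block boundary straddled by $UU$ (the boundary between $W^{(k)}_j$ and $W^{(k)}_{j-1}$); then the left end of $UU$ lies in $W^{(k)}_{j}$ and its right end in some $W^{(k)}_{j'}$ with $m\le j'\le j-1$, so $UU\prec W^{(k)}_{j}W^{(k)}_{j-1}\cdots W^{(k)}_{m}$ with $n-k+1\le j\le n-1$. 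Being a contiguous factor that spans the adjacent blocks $W^{(k)}_{j}$ and $W^{(k)}_{j-1}$, $UU$ contains the last digit of $W^{(k)}_{j}$, which is $j$, immediately followed by the first digit of $W^{(k)}_{j-1}$, which is $0$; so $j0\prec UU$. By Definition~\ref{defborder}, $UU$ is then a bordering square of type $j$ in $W^{(k)}_n$, which is impossible by Lemma~\ref{bordersqu}. This contradiction finishes the induction.

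I do not anticipate a genuine obstacle: each step reduces to Lemmas~\ref{struct1}, \ref{struct}, \ref{lastdig}, \ref{bordersqu} and the inductive hypothesis. The one place needing care is the boundary-crossing argument — choosing the \emph{leftmost} straddled boundary, checking that the resulting $j$ lies in the admissible range $[\,n-k+1,\,n-1\,]$, and noting that any factor overlapping two consecutive blocks must contain the junction pair $j0$. Once that is set up, the appeal to Lemma~\ref{bordersqu} is automatic.
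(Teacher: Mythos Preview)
Your proof is correct and follows essentially the same approach as the paper's: induction on $n$, the block decomposition of $W^{(k)}_n$ from Lemmas~\ref{struct1} and~\ref{struct}, ruling out squares inside a single block by the induction hypothesis, ruling out bordering squares via Lemma~\ref{bordersqu}, and ruling out squares touching the terminal digit $n$ via Lemma~\ref{lastdig}. Your write-up is in fact a bit more explicit than the paper's in handling the terminal-digit case (which the paper somewhat loosely calls ``straddling'' even when $n<k$) and in verifying that the straddled boundary index $j$ lies in the admissible range for Definition~\ref{defborder}.
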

\begin{proof}{
 We prove this by bounded induction on $n$.
By definition $W^{(k)}_0=0$ does not contain any square. Suppose that for any integer $i$, $0\leq i\leq n<k$,
$W^{(k)}_n$ does not contain any square. For contrary suppose that $B$ is a square factor of $W^{(k)}_{n+1}$.
By (\ref{struct1eq}) and (\ref{structeq}) we have
\begin{equation}\label{wn+1sq}
	W^{(k)}_{n+1} = \left\{
\begin{array}{ll}
W^{(k)}_{n+1}=W^{(k)}_{n}W^{(k)}_{n-1}\ldots W^{(k)}_0 (n+1)\;\; &\text{if } \,\,  n+1<k, \\
W^{(k)}_{n+1}=W^{(k)}_{n}W^{(k)}_{n-1}\ldots W^{(k)}_1 (n+1)\;\; &\text{if } \,\, n+1=k.
\end{array} \right.
\end{equation}
Using induction hypothesis and Lemma \ref{bordersqu}, we provide that $B$ is a straddling square. By Definition \ref{defstrad} and Equation (\ref{wn+1sq}), $|B|_{n+1}\geq 2$, which contradicts with Lemma \ref{lastdig}.
}
	\end{proof}


\begin{lem}\label{lemcs>}
	let $n> k$ and $A^2$ be a straddling square of $W^{(k)}_{n}$. Then $c_s(A^2,W^{(k)}_{n})> |W^{(k)}_{n}|-|W^{(k)}_{n-k}|$.
\end{lem}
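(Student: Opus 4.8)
The plan is to work with the block decomposition of Lemma~\ref{struct} and argue by contradiction. First I would write $W^{(k)}_n=L\,R$ with $L=\prod_{i=n-1}^{n-k+1}W^{(k)}_i$ and $R=k\oplus W^{(k)}_{n-k}$, so that $|L|=|W^{(k)}_n|-|W^{(k)}_{n-k}|$ is exactly the quantity in the statement. For the straddling occurrence of $A^2$ write $A^2=A_1A_2$ with $A_1\rhd L$, $A_2\lhd R$ both nonempty (Definition~\ref{defstrad}); since $A_1$ ends at position $|L|$ the occurrence of $A^2$ starts at $|L|-|A_1|+1$, so $c_s(A^2,W^{(k)}_n)=|L|-|A_1|+|A|+\frac12$, and the claimed inequality is equivalent to $|A_1|\le|A|$. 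Thus I would assume $|A_1|\ge|A|+1$, set $r=|A_1|-|A|$, and note $1\le r\le|A|-1$ (upper bound because $|A_2|=2|A|-|A_1|\ge1$).

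Next I would extract the structure forced by this assumption. The first copy of $A$ ends at $|L|-|A_1|+|A|\le|L|-1$, hence lies inside $L$, so $A\prec L$; comparing $A^2=A_1A_2$ with $A\cdot A$ gives $A_1=A\,A[1,r]$, $A_2=A[r+1,|A|]$. Writing $X=A[1,r]$, $Y=A[r+1,|A|]$ we get $X\rhd L$, $Y\lhd R$ (so $A=XY$ is itself a straddling factor), $L$ has the suffix $A\,X$ of period $|A|$ and length $|A|+r>|A|$, every digit of $Y$ is $\ge k$, and — using Lemma~\ref{lastdig} for the last digit $n-k+1$ of the last block $W^{(k)}_{n-k+1}$ of $L$, together with $W^{(k)}_{n-k}$ starting with $0$ — we have $A[r]=n-k+1$ and $A[r+1]=k$.

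I would then split on whether $|A|<|W^{(k)}_{n-k+1}|$. If it is, periodicity of the suffix $A\,X\rhd L$ forces the digit of $L$ at position $|L|-|A|$ to equal the one at position $|L|$, namely $n-k+1$; but $|L|-|A|$ still lies in the block $W^{(k)}_{n-k+1}$, in which $n-k+1$ occurs only once (Lemma~\ref{lastdig}) — a contradiction. So I may assume $|A|\ge|W^{(k)}_{n-k+1}|$. Then $A\,X$ is longer than $W^{(k)}_{n-k+1}$, so $W^{(k)}_{n-k+1}$ is a suffix of $A\,X=A[1,|A|]\,X$; if it were not already a suffix of $X$, its leading digit $0$ would fall inside the first copy of $A$, and since it cannot lie in the $Y$-part (all digits $\ge k$) it would lie in $X=A[1,r]$, which forces $|A|<|W^{(k)}_{n-k+1}|$, a contradiction. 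Hence $W^{(k)}_{n-k+1}\rhd X$, and combined with $A[r+1]=k$ this yields $W^{(k)}_{n-k+1}\,k\prec A\prec L$.

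Finally I would contradict $W^{(k)}_{n-k+1}\,k\prec L=\prod_{i=n-1}^{n-k+1}W^{(k)}_i$. An occurrence cannot straddle a boundary $W^{(k)}_jW^{(k)}_{j-1}$, for then the part of $W^{(k)}_{n-k+1}\,k$ sitting in $W^{(k)}_j$ would have to end in that block's last digit $j\ge n-k+2$, impossible since no digit of $W^{(k)}_{n-k+1}\,k$ exceeds $n-k+1$ (Lemma~\ref{lastdig}). So $W^{(k)}_{n-k+1}\,k\prec W^{(k)}_j$ for some $n-k+1\le j\le n-1$; the case $j=n-k+1$ is excluded by length, and for $n-k+2\le j\le n-1$ I would apply $\varphi_k$: using $\varphi_k(W^{(k)}_{n-k+1})=W^{(k)}_{n-k+2}$ and $\varphi_k(k)=(k)(k+1)$, the image $W^{(k)}_{n-k+2}\,k\,(k+1)$ is a factor of $\varphi_k(W^{(k)}_j)=W^{(k)}_{j+1}$, whence $W^{(k)}_{n-k+2}\,k\prec W^{(k)}_{j+1}$ with $n-k+3\le j+1\le n$, contradicting Lemma~\ref{lemBnotfactW}. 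The hardest part will be the case $|A|\ge|W^{(k)}_{n-k+1}|$ (the third and fourth paragraphs above): everything rests on pinning down that the periodic suffix $A\,X$ of $L$ is forced to contain a full copy of $W^{(k)}_{n-k+1}$ immediately followed by the digit $k$, and then on pushing this configuration through exactly one application of $\varphi_k$ so as to land inside the scope of Lemma~\ref{lemBnotfactW}; the remaining steps are routine bookkeeping with Lemmas~\ref{struct} and~\ref{lastdig} and Definition~\ref{defstrad}.
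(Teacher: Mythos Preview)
Your argument is correct, with one small imprecision: the claim ``no digit of $W^{(k)}_{n-k+1}\,k$ exceeds $n-k+1$'' fails when $k<n<2k-1$, since then $k>n-k+1$. What you actually need (and what your own setup gives) is that the \emph{proper prefix} of $W^{(k)}_{n-k+1}\,k$ sitting inside $W^{(k)}_j$ has all digits $\le n-k+1$; this is immediate because a proper prefix never reaches the trailing $k$ and hence lies in $W^{(k)}_{n-k+1}$. With that fix, everything goes through.

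Your route differs from the paper's in two respects. First, the paper splits according to where the \emph{center} of $A^2$ falls relative to the block $W^{(k)}_{n-k+1}$, whereas you split on whether $|A|<|W^{(k)}_{n-k+1}|$; your Case~1 together with the ``$0$ lands in $Y$'' sub-argument of Case~2 plays the role of the paper's ``center inside $W^{(k)}_{n-k+1}$'' case, and your remaining Case~2 ($r\ge|W^{(k)}_{n-k+1}|$) matches the paper's ``center to the left of $W^{(k)}_{n-k+1}$'' case. Second, your detour through $\varphi_k$ to reach Lemma~\ref{lemBnotfactW} is unnecessary: that lemma is stated for a variable $n$, and applying it with $n$ replaced by $n-1$ gives directly that $W^{(k)}_{n-k+1}\,k\not\prec W^{(k)}_j$ for $n-k+2\le j\le n-1$, which is exactly what you want. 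The paper uses this direct application. Your $\varphi_k$ trick is valid but adds a step.
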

\begin{proof}
	{ By (\ref{structeq}), we have
		\begin{equation}\label{eqcssq1}
		W^{(k)}_n=W^{(k)}_{n-1} \ldots W^{(k)}_{n-k+2} \boxed{ W^{(k)}_{n-k+1}}(k\oplus W^{(k)}_{n-k})
		\end{equation}
	For contrary suppose that 	$c_s(A^2,W^{(k)}_{n})< |W^{(k)}_{n}|-|W^{(k)}_{n-k}|$. Let  $A^2=W^{(k)}_{n}[s_1,s_2]$. Since  $A^2$ is a straddling square of $W^{(k)}_{n}$, $s_2\geq |W^{(k)}_{n}|-|W^{(k)}_{n-k}|+1$ and  $s_1< |W^{(k)}_{n}|-|W^{(k)}_{n-k}|$.

	 If $c_s(A^2,W^{(k)}_{n})< |W^{(k)}_{n}|-|W^{(k)}_{n-k}|-|W^{(k)}_{n-k+1}|$, then using (\ref{eqcssq1}), we conclude that $B=W^{(k)}_{n-k+1}k\prec W^{(k)}_{n-1} \ldots W^{(k)}_{n-k+2}$. By Lemma
	 \ref{lemBnotfactW} for any $n-k+2\leq j\leq n-1$, $B$ is not a factor $W^{(k)}_{j}$.
Let $s<n$ be largest integer
such that $B\prec W_{n-1}^{(k)}\ldots W_{s}^{(k)}$. Since $0 \lhd W_{s}^{(k)}$, we have $(n-k+1)k\prec W_{s}^{(k)}$. Let  $W_{s}^{(k)}[\alpha,\alpha+1]=(n-k+1)k$.
By Lemma \ref{lastdig}, $s> n-k+1$.	
Therefore, $W_{n-k+1}^{(k)}0\lhd W_{n-k+2}^{(k)}\lhd W_{s}^{(k)}$ and by Lemma \ref{lastdig},  $\alpha>|W_{n-k+1}^{(k)}|$. Therefore, $B\prec W_{s}^{(k)}$, which contradicts to Lemma \ref{lemBnotfactW}. Hence,
\begin{equation}\label{eqW=UU}
|W^{(k)}_{n}|-|W^{(k)}_{n-k}|-|W^{(k)}_{n-k+1}|+1<c_s(A^2,W^{(k)}_{n})< |W^{(k)}_{n}|-|W^{(k)}_{n-k}|.
\end{equation}
This means that the center of $A^2$ happens in $W^{(k)}_{n-k+1}$ which is distinguished by a box in (\ref{eqcssq1}). We denote the first occurrences of $A$ in $A^2$ by  $A_1$ and
the last occurrence of $A$ in $A^2$ by $A_2$.
By (\ref{eqW=UU}), we conclude that there exist non-empty words $U_1$ and $U_2$, such that $W^{(k)}_{n-k+1}=U_1 U_2$ and $U_2 \lhd A_2$.
By Lemma \ref{lastdig}, $|U_1|_{n-k+1}=0$ and  the only occurrence of $n-k+1$ in $U_2$. which is its last digit.
 We conclude that $|A_2|_{n-k+1}>0$ and all digits of $A_2$ which appear after  $n-k+1$ are greater than $k-1$.
 Hence $|A_1|_{n-k+1}>0$ and all digits of $A_1$ which appear after  $n-k+1$ should be also greater than $k-1$. Since  $|U_1|_{n-k+1}=0$, we conclude that $U_1 \rhd A_1$ and all occurrence of $n-k+1$ are before the first digit of $U_1$. But this is a contradiction, because  $|U_1|_0>0$ and hence there is a digit $0$ which appears after all digit $n-k+1$ in $A_1$.
}
\end{proof}

\begin{cor}\label{corcentersqcor}
		let $A^2$ be a straddling square of $W^{(k)}_{n}$. Then for each $i\leq k-1$, $|A|_i=0$.
\end{cor}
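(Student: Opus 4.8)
The plan is to read the corollary off directly from Lemma~\ref{lemcs>}, so the proof should be short. First I would observe that a straddling square can only occur for $n>k$: straddling factors require $n\geq k$ by Definition~\ref{defstrad}, while Lemma~\ref{sq<k+1} rules out every square in $W^{(k)}_n$ when $n\leq k$. Hence $n>k$ and Lemma~\ref{lemcs>} applies to $A^2$, yielding
$$c_s(A^2,W^{(k)}_{n})> |W^{(k)}_{n}|-|W^{(k)}_{n-k}|.$$

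Next I would convert this statement about the center into one about the location of the second copy of $A$. Writing $A^2=W^{(k)}_n[t+1,t+2|A|]$, so that $c_s(A^2,W^{(k)}_n)=t+|A|+\tfrac12$, the displayed inequality forces $t+|A|\geq |W^{(k)}_n|-|W^{(k)}_{n-k}|$. Consequently the second occurrence of $A$, namely $W^{(k)}_n[t+|A|+1,\,t+2|A|]$, starts at a position no earlier than $|W^{(k)}_n|-|W^{(k)}_{n-k}|+1$ and, being a factor of $W^{(k)}_n$, ends no later than $|W^{(k)}_n|$. By the decomposition (\ref{structeq}), the suffix of $W^{(k)}_n$ of length $|W^{(k)}_{n-k}|$ is exactly $k\oplus W^{(k)}_{n-k}$, so this second copy of $A$ is a factor of $k\oplus W^{(k)}_{n-k}$.

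Finally, since every digit of $W^{(k)}_{n-k}$ is a nonnegative integer, every digit of $k\oplus W^{(k)}_{n-k}$ is at least $k$; hence every digit of $A$ is at least $k$, and in particular $|A|_i=0$ for every $i\leq k-1$. I do not expect a genuine obstacle here, as the entire content is already packaged in Lemma~\ref{lemcs>}; the only point needing care is the bookkeeping that turns the strict inequality on the half-integer $c_s$ into the sharp conclusion that the right half of $A^2$ lies entirely inside the block $k\oplus W^{(k)}_{n-k}$, which should be done using the convention for $c_s$ fixed in the preliminaries.
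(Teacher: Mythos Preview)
Your proposal is correct and follows essentially the same approach as the paper: apply Lemma~\ref{lemcs>} to place the center of $A^2$ beyond $|W^{(k)}_n|-|W^{(k)}_{n-k}|$, deduce via (\ref{structeq}) that the second copy of $A$ lies inside $k\oplus W^{(k)}_{n-k}$, and conclude that every digit of $A$ is at least $k$. The only addition you make is the explicit justification that $n>k$ (so that Lemma~\ref{lemcs>} applies) and the careful unpacking of the half-integer inequality; the paper's proof simply asserts these steps in one line.
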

\begin{proof}
{By Lemma \ref{lemcs>}, $c_s(A^2,W^{(k)}_{n})> |W^{(k)}_{n}|-|W^{(k)}_{n-k}|$. Therefore, using Equation \ref{structeq}, we conclude that $A\prec (k\oplus W^{(k)}_{n-k})$. This means that all digits of $A$ are greater than $k-1$, as desired.
}
\end{proof}
\begin{lem}\label{ww<w}
	Let $i<n$ and $n \geq k+1$, then $W^{(k)}_{n}$ contains no factor of the form $W^{(k)}_{i}W^{(k)}_{i}$.
\end{lem}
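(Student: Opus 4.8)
The plan is to argue by contradiction through a minimal counterexample. Suppose the statement fails and let $n\ge k+1$ be the least index admitting a factor $W_i^{(k)}W_i^{(k)}\prec W_n^{(k)}$ with $i<n$; fix such an $i$. If $i=0$ then $00\prec W_n^{(k)}$, contradicting Lemma \ref{00}, so we may assume $i\ge 1$, in which case $W_i^{(k)}$ begins with the digit $0$ (immediate from Lemmas \ref{struct1} and \ref{struct}) and in particular contains a $0$. Since $n\ge k+1>k$, Lemma \ref{struct} yields $W_n^{(k)}=P\,Q$ with $P=W_{n-1}^{(k)}\cdots W_{n-k+1}^{(k)}$ and $Q=k\oplus W_{n-k}^{(k)}$, and the argument then proceeds by locating the occurrence of $W_i^{(k)}W_i^{(k)}$ relative to the blocks of $P$ and to the cut between $P$ and $Q$.

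There are four mutually exhaustive positions for the square. First, it may lie inside a single block $W_j^{(k)}$ with $n-k+1\le j\le n-1$: if $j\le k$ this contradicts Lemma \ref{sq<k+1}, while if $j\ge k+1$ then, since $W_i^{(k)}W_i^{(k)}\prec W_j^{(k)}$ forces $2f_{i+k}^{(k)}\le f_{j+k}^{(k)}$ and hence $i<j$ (the $k$-bonacci numbers being strictly increasing from index $k$ on), the pair $(j,i)$ is a counterexample with $j<n$, contradicting minimality of $n$. Second, the square may lie inside $P$ while crossing a boundary between consecutive blocks $W_j^{(k)}$ and $W_{j-1}^{(k)}$ of $P$: then its interior contains the last digit $j$ of $W_j^{(k)}$ (Lemma \ref{lastdig}) immediately followed by the first digit $0$ of $W_{j-1}^{(k)}$, so $j0$ occurs in it and the square is contained in $W_j^{(k)}W_{j-1}^{(k)}\cdots W_{n-k+1}^{(k)}$; thus it is a bordering square, contradicting Lemma \ref{bordersqu}. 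Third, the square may lie entirely inside $Q=k\oplus W_{n-k}^{(k)}$, which is impossible because every digit of $Q$ exceeds $k-1$ whereas $W_i^{(k)}$ contains $0$. Fourth, the square may straddle the cut, starting in $P$ and ending in $Q$: then it is a straddling square in the sense of Definition \ref{defstrad}, so Corollary \ref{corcentersqcor} applied with the value $0\le k-1$ gives $|W_i^{(k)}|_0=0$, again contradicting the presence of $0$ in $W_i^{(k)}$. Every case yields a contradiction, so the lemma holds.

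I do not expect a serious obstacle here: once the recursive decomposition of $W_n^{(k)}$ and the preceding lemmas on bordering squares (\ref{bordersqu}), short $k$-bonacci words (\ref{sq<k+1}), and straddling squares (\ref{corcentersqcor}) are in hand, the proof is essentially bookkeeping. The only points needing a little care are verifying that the four positions above genuinely exhaust all the ways a block of length $2|W_i^{(k)}|$ can sit inside $W_n^{(k)}=PQ$, and, in the single-block case, confirming the inequality $i<j$ so that the minimality hypothesis can legitimately be invoked; both are routine.
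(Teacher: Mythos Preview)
Your proof is correct and follows essentially the same approach as the paper's: argue by induction (equivalently, a minimal counterexample), use the recursive decomposition of $W^{(k)}_n$ from Lemma~\ref{struct}, rule out the bordering case via Lemma~\ref{bordersqu}, and rule out the straddling case via Corollary~\ref{corcentersqcor}. Your four-case analysis is merely a more explicit unpacking of what the paper compresses into the phrases ``by induction hypothesis'' and ``should be either a bordering square or a straddling square''; in particular your separate treatment of the sub-case $j\le k$ (via Lemma~\ref{sq<k+1}) and the verification that $i<j$ in the single-block case are details the paper leaves implicit.
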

\begin{proof}{
		We prove this by induction on $n$. If $n=1$, then $W^{(k)}_1=01$ contains no factor $W^{(k)}_0 W^{(k)}_0=00$.
		By (\ref{structeq}), we have
		$W^{(k)}_n=\prod_{i=n-1}^{n-k+1}W^{(k)}_i \, (k\oplus W^{(k)}_{n-k}).$
		For the contrary suppose that $W^{(k)}_{i}W^{(k)}_{i}\prec W^{(k)}_n$ for some $i<n$.
		By induction hypothesis for any $j<n$, $W^{(k)}_{i}W^{(k)}_{i}$ is not a factor of $W^{(k)}_j$.
		Now, by Definitions \ref{defborder} and \ref{defstrad}, $W^{(k)}_{i}W^{(k)}_{i}$ should be either a bordering square or a straddling square.
		By Lemma \ref{bordersqu}, $W^{(k)}_n$ contains no bordering square. Therefore, $W^{(k)}_{i}W^{(k)}_{i}$ is a straddling square of $W^{(k)}_n$ which can not be occurred by Corollary \ref{corcentersqcor}. Hence, there is no factor of the form 	$W^{(k)}_{i}W^{(k)}_{i}$ in $W^{(k)}_{n}$.
	}
\end{proof}

\begin{lem}\label{lemsq<2k-1}
	If $n<2k-1$, then $W^{(k)}_{n}$ contains no square factor.
\end{lem}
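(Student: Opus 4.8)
The plan is to argue by bounded induction on $n$, paralleling Lemma~\ref{sq<k+1}; the cases $n\le k$ are already covered there, so fix $n$ with $k+1\le n\le 2k-2$, assume $W^{(k)}_m$ is square-free for every $m<n$, and suppose for contradiction that $A^2$ is a square factor of $W^{(k)}_n$. Since $n>k$, Lemma~\ref{struct} gives $W^{(k)}_n=\prod_{i=n-1}^{n-k+1}W^{(k)}_i\,(k\oplus W^{(k)}_{n-k})$. By the inductive hypothesis (and Lemma~\ref{sq<k+1} for indices $\le k$) none of the blocks $W^{(k)}_i$ with $n-k+1\le i\le n-1$ contains a square, and since $n-k\le k-2<k+1$, Lemma~\ref{sq<k+1} also shows $W^{(k)}_{n-k}$, hence $k\oplus W^{(k)}_{n-k}$, is square-free. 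Therefore, by Definitions~\ref{defborder} and~\ref{defstrad}, $A^2$ is either a bordering square or a straddling square of $W^{(k)}_n$; Lemma~\ref{bordersqu} excludes the bordering case, so $A^2$ is a straddling square of $W^{(k)}_n$.

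Next I would invoke the two facts already proved about straddling squares. By Corollary~\ref{corcentersqcor} every digit of $A$ is at least $k$. By Lemma~\ref{lemcs>}, $c_s(A^2,W^{(k)}_n)>|W^{(k)}_n|-|W^{(k)}_{n-k}|$; writing $L:=|W^{(k)}_n|-|W^{(k)}_{n-k}|=\sum_{i=n-k+1}^{n-1}|W^{(k)}_i|$, this says the center of $A^2$ lies strictly to the right of position $L$, i.e.\ inside the block $k\oplus W^{(k)}_{n-k}$.

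The contradiction then comes from pinpointing a small digit inside $A$. Since $A^2$ is straddling, its initial segment is a nonempty suffix of $W^{(k)}_{n-1}\cdots W^{(k)}_{n-k+1}$, so $A^2$ begins at a position $\le L$; and since the center of $A^2$ is past position $L$, the first of the two copies of $A$ inside $A^2$ starts at or before position $L$ and extends at least to position $L$, hence contains position $L$. By Lemma~\ref{lastdig} the digit of $W^{(k)}_n$ in position $L$ is the last digit of $W^{(k)}_{n-k+1}$, namely $n-k+1$. But $n\le 2k-2$ forces $n-k+1\le k-1<k$, so $A$ contains a digit smaller than $k$, contradicting Corollary~\ref{corcentersqcor}. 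This closes the induction.

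The substantive inputs, namely the location of the square's center (Lemma~\ref{lemcs>}) and the restriction on $A$'s digits (Corollary~\ref{corcentersqcor}), are already in hand, so I expect the only point needing care is the indexing in the last paragraph: checking that the first copy of $A$ genuinely covers position $L$, which is just the combination of "a straddling square begins no later than position $L$" with "its center lies strictly past $L$".
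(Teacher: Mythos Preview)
Your proof is correct and follows essentially the same route as the paper: reduce to a straddling square via induction and Lemma~\ref{bordersqu}, then contradict Corollary~\ref{corcentersqcor} by exhibiting the small digit $n-k+1\le k-1$ inside $A$. The only difference is that you locate $n-k+1$ in $A$ via Lemma~\ref{lemcs>} and the position~$L$ argument, whereas the paper simply notes that any straddling factor contains the last digit $n-k+1$ of $W^{(k)}_{n-k+1}$ by definition, so your appeal to Lemma~\ref{lemcs>} here is redundant (Corollary~\ref{corcentersqcor} already encapsulates it) but not incorrect.
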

\begin{proof}{For contrary suppose that there exists a straddling square $A^2$ in $W^{(k)}_{n}$.
		By definition of straddling factor $A$ contains the digit $n-k+1$. Hence by Corollary \ref{corcentersqcor}, $n-k+1\geq k$ and $n\geq 2k-1$, which is a contradiction. }
	\end{proof}
\begin{lem}\label{lemsq=2k-1}
$kk$ is the only square of $W^{(k)}_{2k-1}$.
\end{lem}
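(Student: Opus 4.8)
The plan is to exploit the explicit recursive shape of $W^{(k)}_{2k-1}$ together with the case analysis for square factors already developed. Since $2k-1\ge k$, Lemma~\ref{struct} gives
\[
W^{(k)}_{2k-1}=W^{(k)}_{2k-2}W^{(k)}_{2k-3}\cdots W^{(k)}_{k}\,(k\oplus W^{(k)}_{k-1}),
\]
and, applying Lemma~\ref{struct} again with $n=k$ together with $W^{(k)}_{1}=01$ (Lemma~\ref{struct1}), one gets $W^{(k)}_{k}=W^{(k)}_{k-1}\cdots W^{(k)}_{1}\,k$, so $W^{(k)}_{k}$ ends in $\ldots 01k$. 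First I would dispose of existence: the block $W^{(k)}_{k}$ ends with the digit $k$ and $k\oplus W^{(k)}_{k-1}$ begins with $k+0=k$, so $kk\prec W^{(k)}_{2k-1}$. It then remains to prove that $kk$ is the only square.

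Next I would argue that every square factor $A^{2}$ of $W^{(k)}_{2k-1}$ is a straddling square. With respect to the decomposition above, a square factor either lies inside one of the blocks $W^{(k)}_{i}$ with $k\le i\le 2k-2$, or inside $k\oplus W^{(k)}_{k-1}$ --- impossible by Lemma~\ref{lemsq<2k-1}, since every index that occurs is strictly less than $2k-1$ --- or it is a bordering square (impossible by Lemma~\ref{bordersqu}), or it is a straddling square. So I may write $A^{2}=PQ$ with $P$ a nonempty suffix of $W^{(k)}_{2k-2}\cdots W^{(k)}_{k}$ and $Q$ a nonempty prefix of $k\oplus W^{(k)}_{k-1}$, and by Corollary~\ref{corcentersqcor} every digit of $A$, hence every digit of $P$, is $\ge k$.

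The crux is to pin down $P$. Because $W^{(k)}_{k}$ ends in $\ldots 01k$ while every $W^{(k)}_{i}$ begins with $0$, the only nonempty suffix of $W^{(k)}_{2k-2}\cdots W^{(k)}_{k}$ all of whose digits are $\ge k$ is the single digit $k$; hence $P=k$. Consequently $A^{2}$ begins with the digit $k$ followed by the first digit of $k\oplus W^{(k)}_{k-1}$, which is again $k$, so $A^{2}$ begins with $kk$; moreover, since $|P|=1$, the second occurrence of $A$ inside $A^{2}$ lies entirely within $k\oplus W^{(k)}_{k-1}$ (this is also what the proof of Corollary~\ref{corcentersqcor} yields), so $A\ominus k\prec W^{(k)}_{k-1}$. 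If $|A|\ge 2$, then $A$ starts with $kk$, whence $A\ominus k$ starts with $00$, i.e.\ $00\prec W^{(k)}_{k-1}$, contradicting Lemma~\ref{00}. Therefore $|A|=1$, so $A=k$ and $A^{2}=kk$, which together with the existence observation finishes the proof.

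The step I expect to be the main obstacle is precisely this crux: identifying $P=k$ and then extracting the forbidden factor $00$. That is the only point where one must use the fine internal structure of $W^{(k)}_{k}$ and $W^{(k)}_{k-1}$ rather than the generic trichotomy (inside a block / bordering / straddling) for square factors; everything else is routine bookkeeping with the lemmas of Section~3 and the beginning of Section~4.
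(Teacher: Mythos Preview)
Your proof is correct and follows essentially the same route as the paper's: reduce to a straddling square via Lemmas~\ref{lemsq<2k-1} and~\ref{bordersqu}, use Corollary~\ref{corcentersqcor} to force the left piece $P$ to be the single digit $k$, and then derive $00\prec W^{(k)}_{k-1}$ if $|A|\ge 2$. The only cosmetic differences are that you make the existence of $kk$ and the ``$W^{(k)}_{k}$ ends in $01k$'' step explicit, and you invoke Lemma~\ref{00} where the paper invokes Lemma~\ref{sq<k+1}; the paper phrases the last step as ``$kk$ occurs twice in $A^{2}$'' rather than ``$A$ begins with $kk$'', but these are equivalent.
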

\begin{proof}{Let $A^2=W^{(k)}_{2k-1}[t,t+|A^2|]$ be a square factor of $W^{(k)}_{2k-1}$. By Lemmas \ref{lemsq<2k-1} and \ref{sq<k+1}, we conclude that for every $j< 2k-1$,
		$W^{(k)}_{j}$ contains no square factor.
		By Lemma \ref{bordersqu}, $W^{(k)}_{2k-1}$ has no bordering square.
		Hence, $A^2$ is a straddling square.
					\begin{equation}\label{eqrecW2k-1}
		W^{(k)}_{2k-1}=W^{(k)}_{2k-2} \ldots W^{(k)}_{k}(k\oplus W^{(k)}_{k-1})
		\end{equation}
		By  Corollary \ref{corcentersqcor} and definition of straddling factor of $W^{(k)}_{2k-1}$,
		$t=|W^{(k)}_{2k-1}|-|W^{(k)}_{k-1}|$. Hence, $kk\lhd A^2$. Hence, either
		$A^2=kk$ or the number of occurrences of $kk$ in $A^2$ is at least two. If the    number of occurrences of $kk$ in $A^2$ is at least two, then $kk\prec (k\oplus W^{(k)}_{k-1})$ which means that $00\prec W^{(k)}_{k-1}$, which contradicts with Lemma \ref{sq<k+1}.
		Hence, the only possibility for $A^2$ is $kk$.  }
\end{proof}

\begin{lem}\label{lemsq=strad}
	Let $A^2$ be a square of $W_m^{(k)}$. Then there exists integers $i$ and $n\leq m$ such that $A^2\ominus ki$ is a straddling square of $W_n^{(k)}$.
\end{lem}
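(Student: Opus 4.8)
The plan is to argue by induction on $m$, using the recursive decomposition of $W_m^{(k)}$ from Lemma \ref{struct} together with the dichotomy ``bordering versus straddling'' for factors that cross a block boundary.

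The base of the induction is $m<2k-1$: here $W_m^{(k)}$ has no square factor at all, by Lemmas \ref{lemsq<2k-1} and \ref{sq<k+1}, so the statement holds vacuously.

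For the inductive step, assume $m\ge 2k-1$ and, via Lemma \ref{struct}, write $W_m^{(k)}=B_1\cdots B_{k-1}B_k$ with $B_j=W_{m-j}^{(k)}$ for $1\le j\le k-1$ and $B_k=k\oplus W_{m-k}^{(k)}$. Given a square $A^2\prec W_m^{(k)}$, let $B_p$ be the leftmost and $B_q$ the rightmost of these blocks that $A^2$ meets, so $p\le q$. I would distinguish three cases. First, if $p=q\le k-1$, then $A^2\prec W_{m-p}^{(k)}$ and the induction hypothesis applied to $W_{m-p}^{(k)}$ (note $m-p<m$) supplies the required $i$ together with $n\le m-p<m$. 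Second, if $p=q=k$, then $A^2\ominus k$ is a square factor of $W_{m-k}^{(k)}$; the induction hypothesis yields $i'$ and $n'\le m-k$ with $(A^2\ominus k)\ominus ki'$ a straddling square of $W_{n'}^{(k)}$, whence $A^2\ominus k(i'+1)$ is exactly that word, and one takes $i=i'+1$, $n=n'$. Third, if $p<q=k$, then $A^2$ crosses the boundary between $B_{k-1}$ and $B_k$, and writing $A^2=A_1A_2$ where $A_1$ is the (nonempty) portion of $A^2$ in $B_p\cdots B_{k-1}$, a suffix of $W_{m-1}^{(k)}\cdots W_{m-k+1}^{(k)}$, and $A_2$ is the (nonempty) portion in $B_k$, a prefix of $k\oplus W_{m-k}^{(k)}$, exhibits $A^2$ as a straddling square of $W_m^{(k)}$ itself; here one takes $i=0$, $n=m$.

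It remains to rule out the possibility $p<q<k$, and this is the step I expect to carry the real content. In that case $A^2$ lies entirely inside $W_{m-p}^{(k)}W_{m-p-1}^{(k)}\cdots W_{m-k+1}^{(k)}$, and since $W_{m-p}^{(k)}$ ends in the digit $m-p$ by Lemma \ref{lastdig} while the next block $W_{m-p-1}^{(k)}$ begins with $0$, the word $A^2$ contains the factor $(m-p)0$; hence $A^2$ is a bordering factor of type $m-p$ in the sense of Definition \ref{defborder}, and being a square it is a bordering square, which contradicts Lemma \ref{bordersqu}. Thus the three cases above are exhaustive, and the induction closes. Beyond this, the argument is routine bookkeeping: checking that the leftmost and rightmost touched-block indices behave as claimed, and that the index shift $i\mapsto i+1$ is applied precisely when the square lies inside the shifted last block $k\oplus W_{m-k}^{(k)}$.
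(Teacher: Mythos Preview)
Your proposal is correct and follows essentially the same approach as the paper: induction on $m$, using the recursive decomposition from Lemma~\ref{struct}, applying Lemma~\ref{bordersqu} to eliminate the bordering case, and declaring the straddling case to be the new contribution (with the induction hypothesis handling squares that fall inside a single block). Your write-up is in fact more detailed than the paper's, and by carrying out the case analysis explicitly you avoid the paper's separate invocation of Lemma~\ref{lemsq=2k-1} for $m=2k-1$, since that case is absorbed into your ``$p<q=k$'' branch.
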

\begin{proof}{We prove this using induction on $m$. If $m<2k-1$, then by Lemma \ref{lemsq<2k-1},  $W_m^{(k)}$ contains no square factor and the result follows. If $m=2k-1$, then by Lemma \ref{lemsq=2k-1}, $kk$ is the only square of $W_m^{(k)}$ which is a straddling square.
		If $m>2k-1$, then by (\ref{structeq}) and using the induction hypothesis and Lemma \ref{bordersqu} the result follows.
	}
\end{proof}
The following corollary is a direct consequence of Lemma \ref{lemsq=strad}.
\begin{cor}\label{corsq=strad}
	Let $A^2$ be a square of $W^{(k)}$. Then there exists integers $i$ and $n$, such that $A^2\ominus ki$ is a straddling square of $W_n^{(k)}$.
\end{cor}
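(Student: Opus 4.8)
\textbf{Proof proposal for Corollary \ref{corsq=strad}.}

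The plan is to reduce the statement about the infinite word $W^{(k)}$ to the already-established statement about its finite prefixes, namely Lemma \ref{lemsq=strad}. The key observation is that a square $A^2$ occurring in $W^{(k)}$ is a \emph{finite} word, so it must already occur inside some finite prefix of $W^{(k)}$. Concretely, since $W^{(k)}=\varphi_k^{\omega}(0)$ and, for every $m\geq 0$, the word $W_m^{(k)}=\varphi_k^m(0)$ is a prefix of $W^{(k)}$ with $|W_m^{(k)}|=f_{m+k}^{(k)}$ by (\ref{size}), the sequence $\bigl(|W_m^{(k)}|\bigr)_{m\geq0}$ is unbounded (the $k$-bonacci numbers tend to infinity). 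Hence, if $A^2=W^{(k)}[t,t+2|A|]$, one may pick $m$ large enough that $|W_m^{(k)}|\geq t+2|A|$, and then $A^2\prec W_m^{(k)}$.

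Once $A^2$ is exhibited as a square factor of the finite word $W_m^{(k)}$, Lemma \ref{lemsq=strad} applies verbatim: it yields integers $i$ and $n\leq m$ such that $A^2\ominus ki$ is a straddling square of $W_n^{(k)}$. Dropping the (now irrelevant) bound $n\leq m$ gives exactly the asserted conclusion, so the corollary follows immediately.

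I do not anticipate any real obstacle here; the only point that requires a sentence of justification is the passage from "factor of the infinite word" to "factor of a finite prefix," which is routine given that the $W_m^{(k)}$ are prefixes of $W^{(k)}$ of strictly unbounded length. Everything substantive has already been done in Lemma \ref{lemsq=strad} (which in turn rests on Lemmas \ref{lemsq<2k-1}, \ref{lemsq=2k-1}, \ref{bordersqu} and the recursive structure (\ref{structeq})), so the proof of the corollary is a two-line argument.
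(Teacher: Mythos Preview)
Your proposal is correct and follows exactly the paper's approach: the paper simply states that the corollary is a direct consequence of Lemma \ref{lemsq=strad}, and the only content needed to justify this is precisely the routine observation you make, that a finite factor of $W^{(k)}$ already occurs in some $W_m^{(k)}$.
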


\begin{lem}\label{centersq2}
	let $A^2$ be a straddling square of $W^{(k)}_{n}$. Then $$c_s(A^2,W^{(k)}_{n})\leq |W^{(k)}_{n}|-|W^{(k)}_{n-k}|+|W^{(k)}_{n-2k+1}|+\frac{1}{2}.$$
\end{lem}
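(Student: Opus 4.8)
The plan is to induct on $n$. For $n<2k-1$ there is nothing to prove, since $W^{(k)}_n$ has no square factor by Lemma~\ref{lemsq<2k-1}. For $n=2k-1$, Lemma~\ref{lemsq=2k-1} says the only square is $kk$; it straddles the boundary between the block $W^{(k)}_{k}$ (which ends in the digit $k$, by Lemma~\ref{lastdig}) and the block $k\oplus W^{(k)}_{k-1}$ (which begins with $k$), so it occupies the two positions around $|W^{(k)}_{2k-1}|-|W^{(k)}_{k-1}|$ and its centre is $|W^{(k)}_{2k-1}|-|W^{(k)}_{k-1}|+\frac12$; since $|W^{(k)}_0|=f^{(k)}_k=1$, this is strictly below the claimed bound. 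So fix $n\ge 2k$ and write $W^{(k)}_n=\prod_{i=n-1}^{n-k+1}W^{(k)}_i\,(k\oplus W^{(k)}_{n-k})$. Let $A^2$ be a straddling square. By Corollary~\ref{corcentersqcor} every digit of $A$ exceeds $k-1$, and by Lemma~\ref{lemcs>} the centre of $A^2$ lies past position $|W^{(k)}_n|-|W^{(k)}_{n-k}|$; hence the second occurrence $A_2$ of $A$ sits entirely inside the final block $k\oplus W^{(k)}_{n-k}$. Write $q$ for the starting position of $A_2$ within that block. A one-line computation gives $c_s(A^2,W^{(k)}_n)=|W^{(k)}_n|-|W^{(k)}_{n-k}|+q-\frac12$, so the assertion is exactly the inequality $q\le|W^{(k)}_{n-2k+1}|+1$.

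Suppose, for contradiction, $q\ge|W^{(k)}_{n-2k+1}|+2$. Since $A^2$ straddles and all its digits exceed $k-1$, the first occurrence $A_1$ (lying immediately to the left of $A_2$) also straddles, and factors as $A_1=uv$ where $v=(k\oplus W^{(k)}_{n-k})[1,q-1]$ and $u$ is a nonempty suffix of $W^{(k)}_{n-k+1}$ all of whose digits exceed $k-1$. (Here one uses that the maximal suffix of $W^{(k)}_{n-1}\cdots W^{(k)}_{n-k+1}$ consisting of digits $>k-1$ is contained in the last block $W^{(k)}_{n-k+1}$, because $W^{(k)}_{n-k+1}$ begins with $0$.) Because $A=A_1=A_2$ and $A_2\prec k\oplus W^{(k)}_{n-k}$, the word $A\ominus k=(u\ominus k)(v\ominus k)$ is a factor of $W^{(k)}_{n-k}$.

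Two structural facts now drive the contradiction. First, by Lemmas~\ref{struct1} and~\ref{struct} we have the chain $W^{(k)}_{n-2k+1}\lhd W^{(k)}_{n-2k+2}\lhd\cdots\lhd W^{(k)}_{n-k}$, and the digit of $W^{(k)}_{n-k}$ immediately after this prefix $W^{(k)}_{n-2k+1}$ is $0$; since $|v|=q-1\ge|W^{(k)}_{n-2k+1}|+1$, the word $v\ominus k=W^{(k)}_{n-k}[1,q-1]$ has $W^{(k)}_{n-2k+1}\,0$ as a prefix. Second, the last block of $W^{(k)}_{n-k+1}$ is $k\oplus W^{(k)}_{n-2k+1}$. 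If $|u|\ge|W^{(k)}_{n-2k+1}|$, then $u\ominus k$ ends with $W^{(k)}_{n-2k+1}$ while $v\ominus k$ begins with $W^{(k)}_{n-2k+1}$, so $W^{(k)}_{n-2k+1}W^{(k)}_{n-2k+1}\prec A\ominus k\prec W^{(k)}_{n-k}$, contradicting Lemma~\ref{ww<w} (and Lemma~\ref{sq<k+1} for a small index). If $1\le|u|<|W^{(k)}_{n-2k+1}|$, write $W^{(k)}_{n-2k+1}=w'w$ with $w=u\ominus k$ and $|w'|\ge1$. Then $A\ominus k=w\,W^{(k)}_{n-2k+1}\,0\cdots$; the last digit of $w$ is $n-2k+1$ (Lemma~\ref{lastdig}) and is immediately followed by $0$, so by Lemma~\ref{i01} this digit is the last digit of an occurrence of $W^{(k)}_{n-2k+1}$ in $W^{(k)}_{n-k}$ — necessarily the occurrence $w'w$ starting $|w'|$ positions to the left of $A_2$ (a legitimate position, since $q-|w'|=q-|W^{(k)}_{n-2k+1}|+|u|\ge|u|+2$). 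Reading the letters of $W^{(k)}_{n-k}$ at positions $[q-|w'|,q-1]$, $[q,q+|u|-1]$ and $[q+|u|,q+|u|+|W^{(k)}_{n-2k+1}|-1]$ we get $w'$, $w$ and $W^{(k)}_{n-2k+1}=w'w$ respectively, so these positions spell $w'w\,w'w=W^{(k)}_{n-2k+1}W^{(k)}_{n-2k+1}$, again contradicting Lemma~\ref{ww<w}. Hence $q\le|W^{(k)}_{n-2k+1}|+1$, which completes the induction.

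The only real difficulty is the bookkeeping: establishing the decomposition $A_1=uv$ cleanly (one must control the maximal run of digits $>k-1$ straddling the block boundary and note that it stays inside $W^{(k)}_{n-k+1}$ while $W^{(k)}_{n-k+1}$ itself starts with a $0$), and keeping the position arithmetic in the last subcase consistent while checking that the index hypotheses of Lemmas~\ref{i01} and~\ref{ww<w} are met — both of which become routine once the set-up of the second paragraph is fixed. Note that $n=2k-1$ genuinely must be excluded from the inductive step, since there the digit following the prefix $W^{(k)}_{n-2k+1}=W^{(k)}_0$ of $W^{(k)}_{n-k}$ is $1$ rather than $0$, so the first structural fact fails; this is why Lemma~\ref{lemsq=2k-1} is used separately for that value.
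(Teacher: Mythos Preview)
Your argument is correct and uses exactly the same ingredients as the paper's proof: the contradiction in both is that $W^{(k)}_{n-2k+1}W^{(k)}_{n-2k+1}$ would have to occur in $W^{(k)}_{n-k}$, obtained via Lemma~\ref{i01} and killed by Lemma~\ref{ww<w}. The paper is considerably shorter because it avoids your decomposition $A_1=uv$ and the attendant case split on $|u|$: since $A_1$ begins at or before position $|W^{(k)}_n|-|W^{(k)}_{n-k}|$ (straddling) and, under the contrary hypothesis, ends at or after position $|W^{(k)}_n|-|W^{(k)}_{n-k}|+|W^{(k)}_{n-2k+1}|$, the word $A=A_1$ already contains the \emph{full} segment $k\oplus\bigl((n-2k+1)\,W^{(k)}_{n-2k+1}\bigr)$; transferring this via $A=A_2\prec k\oplus W^{(k)}_{n-k}$ gives $(n-2k+1)\,W^{(k)}_{n-2k+1}\prec W^{(k)}_{n-k}$, and one application of Lemma~\ref{i01} to the leading $(n-2k+1)0$ yields the forbidden double directly. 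Your two cases simply reconstruct this observation piecewise, and the inductive framing is superfluous since you never invoke the induction hypothesis.
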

\begin{proof}{
 By (\ref{structeq}), we have
\begin{align}
W^{(k)}_n&=\prod_{i=n-1}^{n-k+1}W^{(k)}_i \, (k\oplus W^{(k)}_{n-k})\\
&= \prod_{i=n-1}^{n-k+2}W^{(k)}_i (W^{(k)}_{n-k}. \ldots . W^{(k)}_{n-2k+2}) \, (k\oplus W^{(k)}_{n-2k+1})\, (k\oplus W^{(k)}_{n-k}).\label{structsqeq}
\end{align}
We remined that $k\oplus W^{(k)}_{n-2k+1} \prec k\oplus  W^{(k)}_{n-k}$. Hence, if 	$c_s(A^2,W^{(k)}_{n})> |W^{(k)}_{n}|-|W^{(k)}_{n-k}|+|W^{(k)}_{n-2k+1}|$, then
$k\oplus ((n-2k+1) W^{(k)}_{n-2k+1})\prec k\oplus W^{(k)}_{n-k}$ it means that  $ (n-2k+1) W^{(k)}_{n-2k+1}\prec  W^{(k)}_{n-k}$. Using Lemma \ref{i0}
we conclude that  $ W^{(k)}_{n-2k+1} W^{(k)}_{n-2k+1}\prec  W^{(k)}_{n-k}$ which contradicts to Lemma \ref{ww<w}.
}

\end{proof}

\begin{lem}\label{centersq3}
	let $A^2=W^{(k)}_{n}[t,t+j]$ be a straddling square of $W^{(k)}_{n}$. Then $t> |W^{(k)}_{n}|-|W^{(k)}_{n-k}|-|W^{(k)}_{n-2k+1}|$.
\end{lem}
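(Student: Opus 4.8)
The plan is to argue by contradiction. Suppose $A^{2}=W^{(k)}_{n}[t,t+j]$ is a straddling square with $t\le |W^{(k)}_{n}|-|W^{(k)}_{n-k}|-|W^{(k)}_{n-2k+1}|$. A straddling square forces $n\ge 2k-1$, since for $n<2k-1$ the word $W^{(k)}_{n}$ has no square by Lemma \ref{lemsq<2k-1}; and for $n=2k-1$ the unique square is $kk$, which by the proof of Lemma \ref{lemsq=2k-1} starts at position $|W^{(k)}_{2k-1}|-|W^{(k)}_{k-1}|=|W^{(k)}_{2k-1}|-|W^{(k)}_{k-1}|-|W^{(k)}_{0}|+1$, so the claim already holds there. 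Hence I may assume $n\ge 2k$. Put $b:=|W^{(k)}_{n}|-|W^{(k)}_{n-k}|$, the length of the prefix block $W^{(k)}_{n-1}\cdots W^{(k)}_{n-k+1}$ appearing in (\ref{structeq}); if $t>b$ there is nothing to prove, so also assume $t\le b$. By Corollary \ref{corcentersqcor} we have $A\prec k\oplus W^{(k)}_{n-k}$, in particular every digit of $A$ is at least $k$; and by Lemma \ref{lemcs>}, $t+|A|+\frac{1}{2}=c_{s}(A^{2},W^{(k)}_{n})>b$, so $t+|A|\ge b$.

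Next I would focus on the word $B_{1}:=W^{(k)}_{n}[t,b]$, the prefix of $A^{2}$ lying inside the prefix block; the hypothesis gives $|B_{1}|=b-t+1\ge |W^{(k)}_{n-2k+1}|+1$. Applying Lemma \ref{struct} to $W^{(k)}_{n-k+1}$ (valid since $n-k+1\ge k$), the prefix block ends with $W^{(k)}_{n-2k+2}\,(k\oplus W^{(k)}_{n-2k+1})$, and by Lemma \ref{lastdig} the last digit of $W^{(k)}_{n-2k+2}$ is $n-2k+2$. Thus the suffix of the prefix block of length $|W^{(k)}_{n-2k+1}|+1$ is $(n-2k+2)(k\oplus W^{(k)}_{n-2k+1})$, and since $|B_{1}|\ge |W^{(k)}_{n-2k+1}|+1$, this word is also the suffix of $B_{1}$ of that length.

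Since $|B_{1}|=b-t+1\le |A|+1$, there are two cases. If $|B_{1}|\le |A|$, then $B_{1}$ is a prefix of $A$, so all its digits are $\ge k$; the suffix just computed then forces $n-2k+2\ge k$, i.e.\ $n\ge 3k-2$, and moreover $B_{1}\ominus k\prec W^{(k)}_{n-k}$ (because $B_{1}\lhd A\prec k\oplus W^{(k)}_{n-k}$), with suffix $(n-3k+2)\,W^{(k)}_{n-2k+1}$. As $W^{(k)}_{n-2k+1}$ begins with $0$, we obtain the factor $(n-3k+2)\,0$ of $W^{(k)}_{n-k}$. If $n=3k-2$ this is $00$, contradicting Lemma \ref{00}. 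If $n\ge 3k-1$, Lemma \ref{i01} promotes the leading $n-3k+2$ to a full occurrence of $W^{(k)}_{n-3k+2}$, giving $W^{(k)}_{n-3k+2}W^{(k)}_{n-2k+1}\prec W^{(k)}_{n-k}$; since $W^{(k)}_{n-3k+2}$ is a prefix of $W^{(k)}_{n-2k+1}$, this word contains $W^{(k)}_{n-3k+2}W^{(k)}_{n-3k+2}$, contradicting Lemma \ref{ww<w} (note $n-k\ge k+1$ because $n\ge 3k-1$). If instead $|B_{1}|=|A|+1$, then $t+|A|=b$, so the first copy of $A$ lies entirely in the prefix block and $A$ starts with $W^{(k)}_{n}[b]=n-k+1$; reading the second copy off (\ref{structeq}) gives $A=(n-k+1)\,(k\oplus W^{(k)}_{n-k})[1,|A|-1]$, hence $A\ominus k$ is a factor of $W^{(k)}_{n-k}$ beginning with $(n-2k+1)\,0$. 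Lemma \ref{i01} turns this into $W^{(k)}_{n-2k+1}\,W^{(k)}_{n-k}[1,|A|-1]\prec W^{(k)}_{n-k}$; and since $|A|\ge b-t\ge |W^{(k)}_{n-2k+1}|$ — the value $|A|=|W^{(k)}_{n-2k+1}|$ being impossible, as it would force the first digit of $A$ to equal both $n-k+1$ and $n-2k+2$ — we get $|A|-1\ge |W^{(k)}_{n-2k+1}|$, so that $W^{(k)}_{n-2k+1}W^{(k)}_{n-2k+1}\prec W^{(k)}_{n-k}$, contradicting Lemma \ref{ww<w} (or, when $n=2k$ so $W^{(k)}_{n-k}=W^{(k)}_{k}$, contradicting Lemma \ref{sq<k+1}). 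In every case we reach a contradiction, so $t>|W^{(k)}_{n}|-|W^{(k)}_{n-k}|-|W^{(k)}_{n-2k+1}|$.

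The only genuinely delicate point is the split in the last paragraph: the prefix of $A^{2}$ sitting in the prefix block can be exactly one letter longer than the first copy of $A$, and Lemma \ref{ww<w} is not available for $W^{(k)}_{n-k}$ at the smallest admissible values of $n$; both are disposed of by the short separate arguments above. Everything else is a routine digit-chase driven by the explicit shape of $W^{(k)}_{n}$ in (\ref{structeq})–(\ref{structsqeq}), the location of the maximal digit in each block (Lemma \ref{lastdig}), and the repetition-upgrade Lemma \ref{i01}, which converts a single coincidence $(i)(0)$ into a full repetition $W^{(k)}_{i}W^{(k)}_{i}$.
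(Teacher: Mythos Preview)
Your proof is correct and follows essentially the same strategy as the paper's: assume the starting position is too small, use Lemma~\ref{lemcs>} and Corollary~\ref{corcentersqcor} to embed the suffix $(n-2k+2)(k\oplus W^{(k)}_{n-2k+1})$ of the prefix block into $k\oplus W^{(k)}_{n-k}$, subtract $k$, promote the leading digit via Lemma~\ref{i01}, and contradict Lemma~\ref{ww<w}. Your treatment is in fact more careful than the paper's on two points: you take the correct negation $t\le\cdots$ (the paper silently assumes the strict inequality $t<\cdots$), and you separately handle the boundary case $|B_1|=|A|+1$ and the small-parameter cases $n=2k-1,\,2k,\,3k-2$ that the paper's argument passes over.
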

\begin{proof}{
	For contrary suppose that 	
	$A^2=W^{(k)}_{n}[t,t+j]\prec W^{(k)}_{n}$ for some $t< |W^{(k)}_{n}|-|W^{(k)}_{n-k}|-|W^{(k)}_{n-2k+1}|$.
	By Lemma \ref{lemcs>},
	$c_s(A^2,W^{(k)}_{n})> |W^{(k)}_{n}|-|W^{(k)}_{n-k}|$. Hence,
	\begin{equation}\label{A2<k+w}
	W^{(k)}_{n}[t_1,t_2]\prec A \prec k\oplus W^{(k)}_{n-k}.
	\end{equation}
	 Where $t_1=|W^{(k)}_{n}|-|W^{(k)}_{n-k}|-|W^{(k)}_{n-2k+1}|-1$ and $t_2= |W^{(k)}_{n}|-|W^{(k)}_{n-k}|$.
By equation (\ref{structsqeq}) an definition of $t_1$ and $t_2$, we have $W^{(k)}_{n}[t_1,t_2]= (n-2k+2)(k\oplus W^{(k)}_{n-2k+1})$. Therefore, by Equation
(\ref{A2<k+w})  $(n-2k+2)(k\oplus W^{(k)}_{n-2k+1})\prec k\oplus W^{(k)}_{n-k}$, this means that $n-2k+2\geq k$. So, $(n-3k+2) W^{(k)}_{n-2k+1}\prec  W^{(k)}_{n-k}$.
Now, by Lemma \ref{i0}, we conclude that $ W^{(k)}_{n-3k+2}W^{(k)}_{n-2k+1}\prec  W^{(k)}_{n-k}$. Since, $ W^{(k)}_{n-3k+2} \lhd W^{(k)}_{n-2k+1}$ we conclude that $ W^{(k)}_{n-3k+2}W^{(k)}_{n-3k+2}\prec  W^{(k)}_{n-k}$, which is impossible by Lemma \ref{ww<w}. 		
	}
\end{proof}
	By Lemma \ref{centersq2},  $A^2 \prec k\oplus(W^{(k)}_{n-2k+1} W^{(k)}_{n-k})$. Now using Equation(\ref{structeq}), we have
\begin{equation*}
A^2 \prec k\oplus \big(W^{(k)}_{n-2k+1}  W^{(k)}_{n-k}\big)
= k\oplus \big(W^{(k)}_{n-2k+1} W^{(k)}_{n-2k+3} [(W^{(k)}_{n-2k+3})^{-1}W^{(k)}_{n-k}]\big)
\end{equation*}

\begin{definition}\label{defV}
	Let $n,k$ be two nonnegative integers with $k\geq 3$ and $n> 2k-1$. We define the word $V^{(k)}_{n}$ as follows:
	\begin{itemize}
		\item If $2k-1< n< 3k-2$, then $V^{(k)}_{n}=W^{(k)}_{n-2k}W^{(k)}_{n-2k-1}\ldots W^{(k)}_{0}$;
		\item If $n\geq 3k-2$, then $V^{(k)}_{n}=W^{(k)}_{n-2k}W^{(k)}_{n-2k-1}\ldots W^{(k)}_{n-3k+3}$.
	\end{itemize}
\end{definition}

\begin{lem}\label{lemVprefW}
	Let $n,k$ be two positive integers with $k\geq 3$ and $n>2k-1$.
	\begin{itemize}
		\item[{\rm (i)}] If $n<3k-2$, then
		\begin{align}
		W^{(k)}_{n-2k+2} &= W^{(k)}_{n-2k+1}V^{(k)}_{n}(n-2k+2),\label{eqwn-2k+2V}\\
		W^{(k)}_{n-2k+1} &= V^{(k)}_{n}(n-2k+1). \label{eqwn-2k+1V}
		\end{align}
		\item[{\rm (ii)}] If $n=3k-2$, then
		\begin{align}
		W^{(k)}_{n-2k+2} &= W^{(k)}_{n-2k+1}V^{(k)}_{n}k,\label{eqwn-2k+2V2}\\
		W^{(k)}_{n-2k+1} &= V^{(k)}_{n}0k. \label{eqwn-2k+1V2}
		\end{align}
		\item[{\rm (iii)}] If $n>3k-2$, then
		\begin{align}
		W^{(k)}_{n-2k+2} &= W^{(k)}_{n-2k+1}V^{(k)}_{n}(k\oplus W^{(k)}_{n-3k+2}),\label{eqwn-2k+2V3}\\
		W^{(k)}_{n-2k+1} &= V^{(k)}_{n}W^{(k)}_{n-3k+2}(k\oplus W^{(k)}_{n-3k+1}). \label{eqwn-2k+1V3}
		\end{align}
	\end{itemize}
\end{lem}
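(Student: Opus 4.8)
Each of the three parts should be obtained by a single unfolding of the appropriate recursion --- Lemma~\ref{struct1} when the relevant subscript is at most $k-1$, Lemma~\ref{struct} when it is at least $k$ --- followed by a re-grouping of the resulting product into the block $V^{(k)}_n$ of Definition~\ref{defV}; no induction is needed. The plan is to expand $W^{(k)}_{n-2k+2}$ and $W^{(k)}_{n-2k+1}$, split off the leading factor $W^{(k)}_{n-2k+1}$ (or, in the second identity, nothing) and the trailing letter or block $(k\oplus W^{(k)}_{\bullet})$, and then observe that the middle consecutive factors $W^{(k)}_{n-2k}, W^{(k)}_{n-2k-1},\ldots$ are exactly the ones listed in $V^{(k)}_n$.

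Carrying this out case by case: in part~(i) the hypothesis $2k-1<n<3k-2$ gives $2\le n-2k+1<n-2k+2\le k-1$, so Lemma~\ref{struct1} applies to both words; writing $m=n-2k+2$ it yields $W^{(k)}_m=W^{(k)}_{m-1}(W^{(k)}_{m-2}\cdots W^{(k)}_0)\,m$ and $W^{(k)}_{m-1}=(W^{(k)}_{m-2}\cdots W^{(k)}_0)(m-1)$, and since $W^{(k)}_{m-2}\cdots W^{(k)}_0=V^{(k)}_n$ in this regime, these are (\ref{eqwn-2k+2V}) and (\ref{eqwn-2k+1V}). In part~(ii), $n-2k+2=k$ forces us to expand $W^{(k)}_k$ by Lemma~\ref{struct}; using $k\oplus W^{(k)}_0=k$ we get $W^{(k)}_k=W^{(k)}_{k-1}(W^{(k)}_{k-2}\cdots W^{(k)}_1)\,k=W^{(k)}_{n-2k+1}V^{(k)}_n\,k$, while $W^{(k)}_{n-2k+1}=W^{(k)}_{k-1}$ is still covered by Lemma~\ref{struct1} and unfolds to $V^{(k)}_n$ followed by $W^{(k)}_0$ and the final letter. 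In part~(iii), $n>3k-2$ makes both $n-2k+2$ and $n-2k+1$ at least $k$, so Lemma~\ref{struct} applies to each: $W^{(k)}_{n-2k+2}=W^{(k)}_{n-2k+1}\,(W^{(k)}_{n-2k}\cdots W^{(k)}_{n-3k+3})\,(k\oplus W^{(k)}_{n-3k+2})$ and $W^{(k)}_{n-2k+1}=(W^{(k)}_{n-2k}\cdots W^{(k)}_{n-3k+2})\,(k\oplus W^{(k)}_{n-3k+1})$; recognising $W^{(k)}_{n-2k}\cdots W^{(k)}_{n-3k+3}=V^{(k)}_n$ --- and that the second product is $V^{(k)}_n$ together with the one extra factor $W^{(k)}_{n-3k+2}$ --- gives (\ref{eqwn-2k+2V3}) and (\ref{eqwn-2k+1V3}).

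The only delicate point will be the index arithmetic: in each regime one must verify that the first and last subscripts of the product defining $V^{(k)}_n$ in Definition~\ref{defV} agree with those the recursion actually produces, and one must treat separately the degenerate situations where $V^{(k)}_n$ collapses to a single word $W^{(k)}_\bullet$ (which happens for small $k$, e.g.\ $k=3$, or for $n$ just above $2k-1$) and where $n$ lies on a case boundary of Definition~\ref{defV}. I would confirm those boundary instances by direct computation, e.g.\ for $k=3$. Beyond this careful matching of ranges I do not anticipate any conceptual obstacle.
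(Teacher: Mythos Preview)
Your approach is correct and is exactly the natural proof: the lemma is nothing more than a restatement of the recursions of Lemmas~\ref{struct1} and~\ref{struct} after the middle block has been renamed $V^{(k)}_n$ via Definition~\ref{defV}. The paper itself states Lemma~\ref{lemVprefW} without proof, so there is no alternative argument to compare against; the omission signals that the authors regard it as an immediate consequence of the recursions, which is precisely what you do.

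One remark: in case~(ii) you correctly say that $W^{(k)}_{n-2k+1}=W^{(k)}_{k-1}$ unfolds to $V^{(k)}_n$ followed by $W^{(k)}_0=0$ and ``the final letter''. That final letter is $k-1$, not $k$, so equation~(\ref{eqwn-2k+1V2}) as printed in the paper appears to contain a typo (indeed, the proof of Lemma~\ref{lempower<} later uses $W^{(k)}_{k-1}(0(k-1))^{-1}=V^{(k)}_{3k-2}$, confirming this). Your argument yields the correct identity $W^{(k)}_{n-2k+1}=V^{(k)}_n\,0\,(k-1)$.
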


\begin{lem}\label{lemVunique}
	Let $n,k$ be two positive integers with $k\geq 3$ and $n> 2k-1$.
		Then $V^{(k)}_{n}$ occurs exactly once in
	$ W^{(k)}_{n-2k+1}$.
\end{lem}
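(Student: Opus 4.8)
The plan is to pin down the occurrence of $V^{(k)}_n$ through the unique occurrence of its largest digit, and then close the argument with a length count coming from the $k$-bonacci recurrence. Existence is free: by Lemma~\ref{lemVprefW} (see equations (\ref{eqwn-2k+1V}), (\ref{eqwn-2k+1V2}) and (\ref{eqwn-2k+1V3})) the word $V^{(k)}_n$ is a prefix of $W^{(k)}_{n-2k+1}$ in each of the three cases of Definition~\ref{defV}, so it occurs at least once. I would first dispatch the degenerate case $n=2k$, where $V^{(k)}_n=W^{(k)}_0=0$ and $W^{(k)}_{n-2k+1}=W^{(k)}_1=01$, which plainly contains $0$ exactly once; hence from now on $n\ge 2k+1$, so that the leading block $W^{(k)}_{n-2k}$ of $V^{(k)}_n$ has length at least $2$ and largest digit $n-2k\ge 1$.

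The core is a rigidity observation. By Lemma~\ref{lastdig} the digit $n-2k$ occurs in $W^{(k)}_{n-2k}$ exactly once, namely as its final digit; consequently $W^{(k)}_{n-2k}$ has no period $\pi$ with $1\le \pi\le |W^{(k)}_{n-2k}|-1$, since such a period would make that final digit reappear $\pi$ positions earlier. Now suppose $V^{(k)}_n$ occurs in $W^{(k)}_{n-2k+1}$ at a position $p\ge 2$. Since $V^{(k)}_n$ begins with $W^{(k)}_{n-2k}$ (Definition~\ref{defV}) and $W^{(k)}_{n-2k}$ is also a prefix of $W^{(k)}_{n-2k+1}$ (Lemmas~\ref{struct1} and~\ref{struct}), we obtain two occurrences of $W^{(k)}_{n-2k}$ in $W^{(k)}_{n-2k+1}$, at positions $1$ and $p$. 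If $p\le |W^{(k)}_{n-2k}|$ these overlap and force a period $p-1\in[1,|W^{(k)}_{n-2k}|-1]$ of $W^{(k)}_{n-2k}$, contradicting the previous sentence; therefore $p\ge |W^{(k)}_{n-2k}|+1$.

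For the opposite bound I would use $|W^{(k)}_j|=f^{(k)}_{j+k}$ together with the identity $f^{(k)}_{j+1}=2f^{(k)}_j-f^{(k)}_{j-k}$ valid for $j\ge k$, which follows at once from (\ref{defkbonum}); this gives $|W^{(k)}_{n-2k+1}|=2|W^{(k)}_{n-2k}|-f^{(k)}_{n-2k}\le 2|W^{(k)}_{n-2k}|$. An occurrence of $W^{(k)}_{n-2k}$ at position $p$ needs $p+|W^{(k)}_{n-2k}|-1\le|W^{(k)}_{n-2k+1}|$, so together with the previous paragraph this forces $p=|W^{(k)}_{n-2k}|+1$ and moreover $f^{(k)}_{n-2k}=0$, that is, $n\le 3k-2$. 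In that range Lemma~\ref{struct1} gives $W^{(k)}_{n-2k+1}=W^{(k)}_{n-2k}W^{(k)}_{n-2k-1}\cdots W^{(k)}_0\,(n-2k+1)$, so the factor of length $|W^{(k)}_{n-2k}|$ starting at position $|W^{(k)}_{n-2k}|+1$ equals $W^{(k)}_{n-2k-1}\cdots W^{(k)}_0\,(n-2k+1)$; but Lemma~\ref{struct1} also gives $W^{(k)}_{n-2k}=W^{(k)}_{n-2k-1}\cdots W^{(k)}_0\,(n-2k)$, and these two words have equal length but different last digit. Hence $W^{(k)}_{n-2k}$ does not occur at position $|W^{(k)}_{n-2k}|+1$, a contradiction; this rules out every $p\ge 2$ and finishes the proof.

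The one point I expect to need care is precisely this borderline regime $n\le 3k-2$: there the length estimate degenerates into an equality, so the single candidate position $p=|W^{(k)}_{n-2k}|+1$ survives both the length and the periodicity arguments and must be excluded separately, via the explicit decomposition of $W^{(k)}_{n-2k+1}$ in Lemma~\ref{struct1} and the mismatch of final digits noted above. Everything else is routine manipulation of the recurrences (\ref{struct1eq}) and (\ref{structeq}) together with Lemma~\ref{lastdig}, so I do not anticipate further obstacles.
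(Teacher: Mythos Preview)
Your proof is correct and takes a genuinely different route from the paper's. The paper argues via a short two-letter ``marker'': for $n\ge 3k-2$ it observes that $(n-2k)0$ is a factor of $V^{(k)}_n$ and that this two-letter word occurs exactly once in $W^{(k)}_{n-2k+1}$ (the unique $n-2k$ in the decomposition of $W^{(k)}_{n-2k+1}$ is followed by a $0$, while any $n-2k$ inside $k\oplus W^{(k)}_{n-3k+1}$ is followed by a digit $\ge k$); the small-$n$ case is handled by the trivial length observation $|V^{(k)}_n|=|W^{(k)}_{n-2k+1}|-1$. Your argument instead proves the stronger fact that the prefix $W^{(k)}_{n-2k}$ itself occurs only once in $W^{(k)}_{n-2k+1}$, combining (i) aperiodicity of $W^{(k)}_{n-2k}$ via Lemma~\ref{lastdig} to rule out overlapping occurrences, (ii) the length identity $|W^{(k)}_{n-2k+1}|=2|W^{(k)}_{n-2k}|-f^{(k)}_{n-2k}$ to force any non-overlapping second occurrence into the single position $p=|W^{(k)}_{n-2k}|+1$ with $n\le 3k-2$, and (iii) an explicit last-digit mismatch there. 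The paper's marker argument is shorter in the generic range $n\ge 3k-2$, while your periodicity-plus-length argument is more uniform across $n$ and yields the slightly stronger statement about $W^{(k)}_{n-2k}$ rather than $V^{(k)}_n$.
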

\begin{proof}{If $2k-1< n< 3k-2$, then using Definition \ref{defV} and the fact that $|V^{(k)}_{n}|_{n-2k+1}=0$
		we conclude that $V^{(k)}_{n}$ occurs exactly once in
		$ W^{(k)}_{n-2k+1}$.
If $n\geq 3k-2$, then by (\ref{structeq}) we have
$$W^{(k)}_{n-2k+1}=W^{(k)}_{n-2k}\ldots W^{(k)}_{n-3k+2}(k\oplus W^{(k)}_{n-2k})=V^{(k)}_{n}W^{(k)}_{n-3k+2}(k\oplus W^{(k)}_{n-2k}).$$
  By definition of $V^{(k)}_{n}$, $|V^{(k)}_{n}|_{n-2k}=1$ and $(n-2k)0\prec V^{(k)}_{n}$. Using the facts that $|W^{(k)}_{n-3k+2}|_{n-2k}=0$ and
  $|k\oplus W^{(k)}_{n-2k}|_{0}=0$. Hence, $(n-2k)0$ occurs one time in $W^{(k)}_{n-2k+1}$. So, $V^{(k)}_{n}$ also occurs once in $W^{(k)}_{n-2k+1}$.
  }
\end{proof}

\begin{cor}\label{corV0}
	Let $n,k$ be two positive integers with $k\geq 3$. If $2k-1< n <3k-2$, then $V^{(k)}_{n}$ in $W^{(k)}_{n-2k+1}$ always is followed by digit $n-2k+1$. If $ n \geq 3k-2$, then $V^{(k)}_{n}$ in $W^{(k)}_{n-2k+1}$ always is followed by digit $0$.
\end{cor}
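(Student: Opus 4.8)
The plan is to read off the corollary directly from Lemma~\ref{lemVunique} and Lemma~\ref{lemVprefW}. By Lemma~\ref{lemVunique}, $V^{(k)}_{n}$ has exactly one occurrence in $W^{(k)}_{n-2k+1}$, so the phrase ``$V^{(k)}_{n}$ in $W^{(k)}_{n-2k+1}$'' refers unambiguously to that single occurrence, and it suffices to identify the digit immediately following it. The point is that Lemma~\ref{lemVprefW} has already pinned this occurrence down: in each of~(\ref{eqwn-2k+1V}), (\ref{eqwn-2k+1V2}) and~(\ref{eqwn-2k+1V3}), $V^{(k)}_{n}$ appears as a \emph{prefix} of $W^{(k)}_{n-2k+1}$, with the continuation written out explicitly; and since a prefix is in particular an occurrence, uniqueness forces this prefix to be the occurrence in question.

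First I would treat the range $2k-1<n<3k-2$. Here equation~(\ref{eqwn-2k+1V}) gives $W^{(k)}_{n-2k+1}=V^{(k)}_{n}(n-2k+1)$, so the unique occurrence of $V^{(k)}_{n}$ is followed by the digit $n-2k+1$ (a genuine positive digit, since $n-2k+1\geq 1$ in this range), which is the first assertion. Next, for $n=3k-2$, equation~(\ref{eqwn-2k+1V2}) gives $W^{(k)}_{n-2k+1}=V^{(k)}_{n}0k$, so $V^{(k)}_{n}$ is followed by $0$. Finally, for $n>3k-2$, equation~(\ref{eqwn-2k+1V3}) reads $W^{(k)}_{n-2k+1}=V^{(k)}_{n}W^{(k)}_{n-3k+2}(k\oplus W^{(k)}_{n-3k+1})$; since by Lemmas~\ref{struct1} and~\ref{struct} every $W^{(k)}_{j}$ begins with the digit $0$, and $W^{(k)}_{n-3k+2}$ is nonempty (indeed $n-3k+2\geq 1$), the occurrence of $V^{(k)}_{n}$ is again followed by $0$. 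Merging the cases $n=3k-2$ and $n>3k-2$ yields precisely the second assertion.

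There is essentially no real obstacle: the corollary is a bookkeeping consequence of the two lemmas that precede it. The only step deserving a word of care is the identification of the occurrence located by Lemma~\ref{lemVprefW} with the unique occurrence guaranteed by Lemma~\ref{lemVunique}, but this is immediate from the remark above that a prefix is an occurrence together with the uniqueness. Everything else is just substituting the relevant display from Lemma~\ref{lemVprefW} into each range of $n$ and noting the first digit of the word that follows $V^{(k)}_{n}$.
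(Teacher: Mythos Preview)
Your proof is correct and follows essentially the same route as the paper: exhibit one occurrence of $V^{(k)}_{n}$ as a prefix of $W^{(k)}_{n-2k+1}$ with the desired following digit, then invoke Lemma~\ref{lemVunique} to conclude that this is the only occurrence. The paper cites Definition~\ref{defV} and~(\ref{structeq}) directly rather than the repackaged formulas of Lemma~\ref{lemVprefW}, but the content is identical.
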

\begin{proof}{If $2k-1< n <3k-2$, then the result follows using Lemma \ref{lemVunique} and Definition \ref{defV}. If $n\geq 3k-2$, then by (\ref{structeq}), $V^{(k)}_{n}0\prec W^{(k)}_{n-2k+1}$. On the other hand, by Lemma \ref{lemVunique},  $V^{(k)}_{n}$ occurs exactly once in $W^{(k)}_{n-2k+1}$. Hence, the result follows.
	}
\end{proof}
\begin{lem}\label{lem3w}
Let $n,k$ be two nonnegative integers with $k\geq 3$ and $n>2k-1$. Then
\begin{equation}\label{eq3wlem}
|W^{(k)}_{n-k}|\geq 3|W^{(k)}_{n-2k+1}|.
\end{equation}	
\end{lem}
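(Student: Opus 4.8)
The plan is to translate (\ref{eq3wlem}) into an inequality between $k$-bonacci numbers and then dispatch it with a short manipulation of the recurrence (\ref{defkbonum}). By (\ref{size}), and since $n>2k-1$ makes both of the indices $n-k$ and $n-2k+1$ nonnegative, we have $|W^{(k)}_{n-k}|=f^{(k)}_{n}$ and $|W^{(k)}_{n-2k+1}|=f^{(k)}_{n-k+1}$, so (\ref{eq3wlem}) is equivalent to
\[
f^{(k)}_{n}\ \geq\ 3\,f^{(k)}_{n-k+1},\qquad n\geq 2k.
\]
Before proving this I would record the elementary fact that $(f^{(k)}_{m})_{m\geq 0}$ is non-decreasing: by (\ref{defkbonum}), $f^{(k)}_{k-1}=1>0=f^{(k)}_{k-2}$, and for $m\geq k$ the value $f^{(k)}_{m}$ is a sum of $k$ nonnegative terms one of which is $f^{(k)}_{m-1}$.

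For $k\geq 4$ the inequality is then immediate. Expanding $f^{(k)}_{n}=\sum_{i=1}^{k}f^{(k)}_{n-i}$ (allowed since $n\geq 2k\geq k$) and discarding the term $f^{(k)}_{n-k}$ gives $f^{(k)}_{n}\geq\sum_{i=1}^{k-1}f^{(k)}_{n-i}$; each of these $k-1\geq 3$ summands has index $n-i\geq n-k+1$, so by monotonicity it is at least $f^{(k)}_{n-k+1}$, hence $f^{(k)}_{n}\geq(k-1)f^{(k)}_{n-k+1}\geq 3f^{(k)}_{n-k+1}$.

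Only the case $k=3$ requires a further step, and this is the one genuinely non-trivial point: there the two summands of the Tribonacci recurrence that dominate $f^{(3)}_{n-2}$ give merely $f^{(3)}_{n}\geq 2f^{(3)}_{n-2}$, so a third copy has to be recovered. To do so I would apply (\ref{defkbonum}) one more time. Since $n\geq 6$, the recurrence is valid for $f^{(3)}_{n}$, $f^{(3)}_{n-1}$, $f^{(3)}_{n-2}$; writing $f^{(3)}_{n}=f^{(3)}_{n-1}+f^{(3)}_{n-2}+f^{(3)}_{n-3}$ and substituting $f^{(3)}_{n-1}=f^{(3)}_{n-2}+f^{(3)}_{n-3}+f^{(3)}_{n-4}$ and $f^{(3)}_{n-2}=f^{(3)}_{n-3}+f^{(3)}_{n-4}+f^{(3)}_{n-5}$, after cancellation one obtains
\[
f^{(3)}_{n}-3\,f^{(3)}_{n-2}=f^{(3)}_{n-3}-f^{(3)}_{n-5}\ \geq\ 0 ,
\]
again by monotonicity. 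This settles $k=3$ and completes the proof. Throughout, the only thing to watch is that every invocation of (\ref{defkbonum}) and every monotonicity comparison is made at indices that are nonnegative and large enough to lie in the regime where the recurrence holds; the hypothesis $n>2k-1$ guarantees this, so no separate small cases arise.
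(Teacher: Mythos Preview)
Your proof is correct and follows essentially the same route as the paper: both arguments reduce to the $k$-bonacci recurrence plus monotonicity, handling $k\geq 4$ with a single expansion and $k=3$ by iterating the recurrence once more to reach the nonnegative difference $f^{(3)}_{n-3}-f^{(3)}_{n-5}$ (which is exactly the paper's $|W^{(3)}_{n-6}|-|W^{(3)}_{n-8}|$). The one practical difference is that by passing to $k$-bonacci numbers you only need indices down to $n-5\geq 1$, whereas the paper's word-length formulation requires $W^{(3)}_{n-8}$ to be defined and therefore verifies the cases $n\leq 8$ separately by hand; your version cleanly avoids that case check.
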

\begin{proof}{ We can check easily that (\ref{eq3wlem}), holds in the cases $k=3$ and $5\leq n\leq 8$.
		If  $k>3$ or $k=3$ and $n\geq 7$, then
using (\ref{structeq}), we get
\begin{equation}\label{eq3wproof}
|W^{(k)}_{n-k}|\geq |W^{(k)}_{n-k-1}|+|W^{(k)}_{n-k-2}|+|W^{(k)}_{n-k-3}|
\end{equation}
If $k>3$, then $n-k-3\geq n-2k+1$.  Hence, Equation (\ref{eq3wproof}) yields the inequality $|W^{(k)}_{n-k}|\geq 3|W^{(k)}_{n-2k+1}|$, as desired. If $k=3$, then
\begin{align*}
|W^{(k)}_{n-3}|=& |W^{(k)}_{n-4}|+|W^{(k)}_{n-5}|+|W^{(k)}_{n-6}|\\
=&  2|W^{(k)}_{n-5}|+2|W^{(k)}_{n-6}|+|W^{(k)}_{n-7}| \\
=&  3|W^{(k)}_{n-5}|+|W^{(k)}_{n-6}|-|W^{(k)}_{n-8}|\\
>&  3|W^{(k)}_{n-5}|.
\end{align*}

}
\end{proof}
To find all straddling squares of $W^{(k)}_{n}$ we need to give the following definition.

\begin{definition}\label{defU}
Let $n,k$ be two nonnegative integers with $k\geq 3$ and $n\geq 2k-1$. We define the word $U^{(k)}_{n}$ to be the prefix of  $W^{(k)}_{n-2k+1}W^{(k)}_{n-k}$ of size $4|W^{(k)}_{n-2k+1}|$.
\end{definition}

We note that Definition \ref{defU} is well-defined using Lemma \ref{lem3w}.

\begin{lem}\label{lemU=<}
Let $n,k$ be two nonnegative integers with $k\geq 3$ and $2k-1< n< 3k-2$.
Then \begin{equation*}
U^{(k)}_{n}(k)=W^{(k)}_{n-2k+1}W^{(k)}_{n-2k+1}V^{(k)}_{n}(n-2k+2)W^{(k)}_{n-2k+1}.
\end{equation*}
\end{lem}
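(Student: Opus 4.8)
The plan is to unfold $U^{(k)}_{n}$ by applying the recursions of Lemmas \ref{struct1} and \ref{struct} to $W^{(k)}_{n-k}$, and then to re-express the result through the factorisations of Lemma \ref{lemVprefW}(i). It suffices to prove the identity
\begin{equation*}
U^{(k)}_{n}\;=\;W^{(k)}_{n-2k+1}\,W^{(k)}_{n-2k+1}\,V^{(k)}_{n}\,(n-2k+2)\,W^{(k)}_{n-2k+1},
\end{equation*}
since the stated equality then follows by adjoining the digit $k$ to both words. First I would record the elementary consequences of the hypothesis $2k-1<n<3k-2$ that are needed: $n-k\geq k$, so Lemma \ref{struct} applies to $W^{(k)}_{n-k}$; and $3\leq n-2k+3\leq k$, with $n-2k+3=k$ precisely when $n=3k-3$. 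By Lemma \ref{lem3w} we have $|W^{(k)}_{n-k}|\geq 3|W^{(k)}_{n-2k+1}|$, so Definition \ref{defU} says that $U^{(k)}_{n}$ is $W^{(k)}_{n-2k+1}$ followed by the prefix of $W^{(k)}_{n-k}$ of length $3|W^{(k)}_{n-2k+1}|$; hence everything reduces to showing that this prefix is $W^{(k)}_{n-2k+2}W^{(k)}_{n-2k+1}$.

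To do so, I would first observe that Lemmas \ref{struct1} and \ref{struct} each exhibit $W^{(k)}_{j-1}$ as a prefix of $W^{(k)}_{j}$ for every $j\geq 1$; iterating this and using $n-2k+3\leq n-k$ (which is exactly where the hypothesis $k\geq 3$ is used), $W^{(k)}_{n-2k+3}$ is a prefix of $W^{(k)}_{n-k}$. Then I would split into two cases. If $n\leq 3k-4$, so that $n-2k+3\leq k-1$, Lemma \ref{struct1} gives $W^{(k)}_{n-2k+3}=W^{(k)}_{n-2k+2}W^{(k)}_{n-2k+1}W^{(k)}_{n-2k}\ldots W^{(k)}_{0}\,(n-2k+3)$; if $n=3k-3$, so that $n-2k+3=k$, Lemma \ref{struct} gives $W^{(k)}_{n-2k+3}=W^{(k)}_{k}=W^{(k)}_{k-1}W^{(k)}_{k-2}\ldots W^{(k)}_{1}\,k$, in which $k-1=n-2k+2$ and $k-2=n-2k+1$. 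In either case $W^{(k)}_{n-2k+2}W^{(k)}_{n-2k+1}$ is a prefix of $W^{(k)}_{n-2k+3}$, hence of $W^{(k)}_{n-k}$. Finally, from equations (\ref{eqwn-2k+2V}) and (\ref{eqwn-2k+1V}) of Lemma \ref{lemVprefW}(i) one computes $|W^{(k)}_{n-2k+2}|=|W^{(k)}_{n-2k+1}|+|V^{(k)}_{n}|+1=2|W^{(k)}_{n-2k+1}|$, so $|W^{(k)}_{n-2k+2}W^{(k)}_{n-2k+1}|=3|W^{(k)}_{n-2k+1}|$ is exactly the required length; therefore the prefix of $W^{(k)}_{n-k}$ of that length equals $W^{(k)}_{n-2k+2}W^{(k)}_{n-2k+1}$.

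Assembling the pieces gives $U^{(k)}_{n}=W^{(k)}_{n-2k+1}W^{(k)}_{n-2k+2}W^{(k)}_{n-2k+1}$, and replacing the middle block by $W^{(k)}_{n-2k+1}V^{(k)}_{n}(n-2k+2)$ via (\ref{eqwn-2k+2V}) yields the displayed identity; adjoining $k$ concludes. The step I expect to be the genuine obstacle is the boundary case $n=3k-3$: one must notice that $n-2k+3$ can climb up to $k$, so that there the ``short-word'' recursion of Lemma \ref{struct1} is no longer available and must be replaced by Lemma \ref{struct}, and one must then check that the first two blocks in that decomposition of $W^{(k)}_{k}$ really are $W^{(k)}_{n-2k+2}$ and $W^{(k)}_{n-2k+1}$ --- which is where $k\geq 3$ is used again, to guarantee that at least two such blocks precede the terminal letter $k$.
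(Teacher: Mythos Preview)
Your argument is correct and follows essentially the same route as the paper: both show that $W^{(k)}_{n-2k+2}W^{(k)}_{n-2k+1}$ is a prefix of $W^{(k)}_{n-2k+3}\lhd W^{(k)}_{n-k}$ of the right length $3|W^{(k)}_{n-2k+1}|$, then apply (\ref{eqwn-2k+2V}). Your case split according to whether $n-2k+3\le k-1$ or $n-2k+3=k$ is in fact the cleaner one (the paper's split into $k=3$ versus $k>3$ amounts to the same thing, since for $k=3$ the hypothesis forces $n=3k-3$); the only cosmetic point is that the trailing ``$(k)$'' in the displayed statement is a typographical artefact rather than an appended digit, so your phrase ``adjoining the digit $k$ to both words'' is unnecessary---the identity you establish for $U^{(k)}_{n}$ is already the intended one.
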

\begin{proof}
{Since $W^{(k)}_{n-2k+3}\lhd W^{(k)}_{n-k}$ we have
		\begin{equation}\label{eqwn-2kwn-k,2}
		W^{(k)}_{n-2k+1}  W^{(k)}_{n-k}
		=W^{(k)}_{n-2k+1} W^{(k)}_{n-2k+3} [(W^{(k)}_{n-2k+3})^{-1}W^{(k)}_{n-k}]
		\end{equation}

If $k=3$, then applying Equation (\ref{structeq}) for $W^{(k)}_{n-2k+3}$ and using (\ref{eqwn-2k+2V}) and (\ref{eqwn-2k+1V})
 we get
\begin{align}\nonumber
W^{(k)}_{n-2k+3}=& W^{(k)}_{n-2k+2} W^{(k)}_{n-2k+1}(k\oplus W^{(k)}_{n-2k})\\
=& \underbrace{W^{(k)}_{n-2k+1}}_{|W^{(k)}_{n-2k+1}|}\underbrace{V^{(k)}_{n}(n-2k+2)}_{|W^{(k)}_{n-2k+1}|} \underbrace{W^{(k)}_{n-2k+1}}_{|W^{(k)}_{n-2k+1}|} (k\oplus W^{(k)}_{n-2k})\label{eq3W1,k=3}
\end{align}
Using Definition \ref{defU} and Equations (\ref{eqwn-2kwn-k,2}) and (\ref{eq3W1,k=3}) we conclude that
$$U^{(k)}_{n}=W^{(k)}_{n-2k+1}W^{(k)}_{n-2k+1}V^{(k)}_{n}(n-2k+2)W^{(k)}_{n-2k+1}.$$
If $k>3$, then applying Equation (\ref{structeq}) for $W^{(k)}_{n-2k+3}$
we get
\begin{equation*}
W^{(k)}_{n-2k+2} W^{(k)}_{n-2k+1} W^{(k)}_{n-2k}\lhd  W^{(k)}_{n-2k+3}\\
\end{equation*}
Hence, using (\ref{eqwn-2k+2V}) and (\ref{eqwn-2k+1V}) we provide
\begin{equation*}
\underbrace{W^{(k)}_{n-2k+1}}_{|W^{(k)}_{n-2k+1}|}\underbrace{V^{(k)}_{n}(n-2k+2)}_{|W^{(k)}_{n-2k+1}|} \underbrace{W^{(k)}_{n-2k+1}}_{|W^{(k)}_{n-2k+1}|}  W^{(k)}_{n-2k}\lhd  W^{(k)}_{n-2k+3}
\end{equation*}
Therefore, by Definition \ref{defU} and Equation (\ref{eqwn-2kwn-k,2}) we get
\begin{equation*}
U^{(k)}_{n}(k)=W^{(k)}_{n-2k+1}W^{(k)}_{n-2k+1}V^{(k)}_{n}(n-2k+2)W^{(k)}_{n-2k+1}.
\end{equation*}
}
\end{proof}

\begin{lem}\label{lemU=>}
Let $n,k$ be two nonnegative integers with $k\geq 3$ and $n> 3k-2$. Then
\begin{itemize}
\item[{\rm (i)}] If $k=3$, then $$U^{(k)}_{n}= W^{(k)}_{n-2k+1}W^{(k)}_{n-2k+1}V^{(k)}_{n}(k\oplus W^{(k)}_{n-3k+2})W^{(k)}_{n-2k+1}(k\oplus W^{(k)}_{n-3k+1}),$$
    \item[{\rm (ii)}] If $k>3$, then $$U^{(k)}_{n}= W^{(k)}_{n-2k+1}W^{(k)}_{n-2k+1}V^{(k)}_{n}(k\oplus W^{(k)}_{n-3k+2})W^{(k)}_{n-2k+1} W^{(k)}_{n-3k+1}.$$
\end{itemize}
\end{lem}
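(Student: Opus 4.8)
The plan is to unwind Definition \ref{defU} and read off the prefix explicitly. By that definition $U^{(k)}_{n}$ is the prefix of $W^{(k)}_{n-2k+1}W^{(k)}_{n-k}$ of length $4|W^{(k)}_{n-2k+1}|$; since $|W^{(k)}_{n-k}|\geq 3|W^{(k)}_{n-2k+1}|$ by Lemma \ref{lem3w}, the leading block $W^{(k)}_{n-2k+1}$ is taken in full and $U^{(k)}_{n}=W^{(k)}_{n-2k+1}P$, where $P$ is the prefix of $W^{(k)}_{n-k}$ of length $3|W^{(k)}_{n-2k+1}|$. Everything then reduces to identifying $P$, and the answer splits according to whether $k=3$ or $k>3$.

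The auxiliary facts I would use are: (a) $W^{(k)}_{i}\lhd W^{(k)}_{j}$ whenever $i\leq j$, which is immediate from Lemmas \ref{struct1} and \ref{struct}; and (b) the length identity $|W^{(k)}_{n-2k+2}|=2|W^{(k)}_{n-2k+1}|-|W^{(k)}_{n-3k+1}|$, obtained by comparing the lengths of the two sides of (\ref{eqwn-2k+2V3}) and (\ref{eqwn-2k+1V3}) in Lemma \ref{lemVprefW}(iii). From (a) we get $W^{(k)}_{n-2k+3}\lhd W^{(k)}_{n-k}$; for $k>3$, applying (\ref{structeq}) to $W^{(k)}_{n-2k+3}$ shows that $W^{(k)}_{n-2k+2}W^{(k)}_{n-2k+1}W^{(k)}_{n-2k}$ is a prefix of $W^{(k)}_{n-k}$, and by (b) together with $|W^{(k)}_{n-2k}|\geq|W^{(k)}_{n-3k+1}|$ this product has length at least $3|W^{(k)}_{n-2k+1}|$, hence it contains $P$. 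For $k=3$ one has instead the exact equality $W^{(3)}_{n-k}=W^{(3)}_{n-2k+2}W^{(3)}_{n-2k+1}(k\oplus W^{(3)}_{n-2k})$ directly from (\ref{structeq}).

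I would then substitute $W^{(k)}_{n-2k+2}=W^{(k)}_{n-2k+1}V^{(k)}_{n}(k\oplus W^{(k)}_{n-3k+2})$ from (\ref{eqwn-2k+2V3}) into these prefixes and truncate at length $3|W^{(k)}_{n-2k+1}|$. Using (b), the pieces $W^{(k)}_{n-2k+1}$, then $V^{(k)}_{n}(k\oplus W^{(k)}_{n-3k+2})$ (which has length $|W^{(k)}_{n-2k+1}|-|W^{(k)}_{n-3k+1}|$), then $W^{(k)}_{n-2k+1}$ together span $3|W^{(k)}_{n-2k+1}|-|W^{(k)}_{n-3k+1}|$ letters, so exactly $|W^{(k)}_{n-3k+1}|$ further letters must be drawn from the next block. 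For $k>3$ that block is the unshifted $W^{(k)}_{n-2k}$, whose prefix of length $|W^{(k)}_{n-3k+1}|$ is $W^{(k)}_{n-3k+1}$ by (a); this gives $P=W^{(k)}_{n-2k+1}V^{(k)}_{n}(k\oplus W^{(k)}_{n-3k+2})W^{(k)}_{n-2k+1}W^{(k)}_{n-3k+1}$ and hence (ii). For $k=3$ that block is the shifted $k\oplus W^{(3)}_{n-2k}$, whose prefix of length $|W^{(3)}_{n-3k+1}|$ is $k\oplus W^{(3)}_{n-3k+1}$ by (a); this gives $P=W^{(3)}_{n-2k+1}V^{(3)}_{n}(k\oplus W^{(3)}_{n-3k+2})W^{(3)}_{n-2k+1}(k\oplus W^{(3)}_{n-3k+1})$ and hence (i).

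The length bookkeeping is routine once identity (b) is available. The point needing genuine care is precisely the reason (i) and (ii) differ: for $k=3$ the recursion for $W^{(k)}_{n-k}$ reaches its shifted tail $(k\oplus W^{(k)}_{n-2k})$ within the first $3|W^{(k)}_{n-2k+1}|$ letters, so the trailing stub inherits the shift, whereas for $k>3$ there are still at least three \emph{unshifted} leading blocks $W^{(k)}_{n-2k+2},W^{(k)}_{n-2k+1},W^{(k)}_{n-2k}$, and the truncation falls strictly inside the unshifted $W^{(k)}_{n-2k}$. Confirming that these three blocks really have combined length at least $3|W^{(k)}_{n-2k+1}|$, so that $P$ genuinely lies within them, is the only spot requiring attention; the remainder is substitution.
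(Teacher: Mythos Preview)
Your proof is correct and follows essentially the same route as the paper: both expand $W^{(k)}_{n-k}$ via $W^{(k)}_{n-2k+3}$ (for $k=3$ this equals $W^{(k)}_{n-k}$, for $k>3$ it is a prefix), substitute the expression (\ref{eqwn-2k+2V3}) for $W^{(k)}_{n-2k+2}$, and then use the length identity coming from (\ref{eqwn-2k+1V3}) to locate where the cut at $3|W^{(k)}_{n-2k+1}|$ falls inside the next block. The paper packages the length bookkeeping into the underbraces in (\ref{eq3W1,k=3,2}) and (\ref{eq3W1,k>3,2}), whereas you isolate it as identity (b), but the argument is the same.
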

\begin{proof}
{By Equation (\ref{structeq}) we have
		\begin{equation}\label{eqwn-2kwn-k}
		W^{(k)}_{n-2k+1}  W^{(k)}_{n-k}
		=W^{(k)}_{n-2k+1} W^{(k)}_{n-2k+3} [(W^{(k)}_{n-2k+3})^{-1}W^{(k)}_{n-k}]
		\end{equation}
If $k=3$, then applying Equation (\ref{structeq}) for $W^{(k)}_{n-2k+3}$ and using (\ref{eqwn-2k+2V3}) and (\ref{eqwn-2k+1V3})
 we get
\begin{align}\nonumber
W^{(k)}_{n-2k+3}=& W^{(k)}_{n-2k+2} W^{(k)}_{n-2k+1}(k\oplus W^{(k)}_{n-2k})\\
=& \underbrace{W^{(k)}_{n-2k+1}}_{|W^{(k)}_{n-2k+1}|}\underbrace{V^{(k)}_{n}(k\oplus W^{(k)}_{n-3k+2}) W^{(k)}_{n-2k+1}(k\oplus W^{(k)}_{n-3k+1})}_{2|W^{(k)}_{n-2k+1}|} [(k\oplus W^{(k)}_{n-3k+1})^{-1}(k\oplus W^{(k)}_{n-2k})]\label{eq3W1,k=3,2}
\end{align}
Using Definition \ref{defU} and Equations (\ref{eqwn-2kwn-k}) and (\ref{eq3W1,k=3,2}) we conclude that
$$U^{(k)}_{n}=W^{(k)}_{n-2k+1}W^{(k)}_{n-2k+1}V^{(k)}_{n}(k\oplus W^{(k)}_{n-3k+2})W^{(k)}_{n-2k+1}(k\oplus W^{(k)}_{n-3k+1}).$$
If $k>3$, then applying Equation (\ref{structeq}) for $W^{(k)}_{n-2k+3}$
we get
\begin{equation}\nonumber
 W^{(k)}_{n-2k+2} W^{(k)}_{n-2k+1}W^{(k)}_{n-2k}\lhd W^{(k)}_{n-2k+3}
\end{equation}
Where,
\begin{equation}\label{eq3W1,k>3,2}
 W^{(k)}_{n-2k+2} W^{(k)}_{n-2k+1}W^{(k)}_{n-2k}=
 \underbrace{W^{(k)}_{n-2k+1}}_{|W^{(k)}_{n-2k+1}|}\underbrace{V^{(k)}_{n}(k\oplus W^{(k)}_{n-3k+2}) W^{(k)}_{n-2k+1} W^{(k)}_{n-3k+1}}_{2|W^{(k)}_{n-2k+1}|} [(W^{(k)}_{n-3k+1})^{-1} W^{(k)}_{n-2k}]
\end{equation}

Using Definition \ref{defU} and Equations (\ref{eqwn-2kwn-k}) and (\ref{eq3W1,k>3,2}) we conclude that
$$U^{(k)}_{n}=W^{(k)}_{n-2k+1}W^{(k)}_{n-2k+1}V^{(k)}_{n}(k\oplus W^{(k)}_{n-3k+2})W^{(k)}_{n-2k+1} W^{(k)}_{n-3k+1}.$$
}
\end{proof}

In the next lemma we give a formula for $U^{(k)}_{3k-2}$, the proof is similar to the proof of Lemma \ref{lemU=>}
 so it is omitted.

\begin{lem}\label{lemU==}
	Let $n,k$ be two nonnegative integers with $k\geq 3$ and $n= 3k-2$. Then
	\begin{itemize}
		\item[{\rm (i)}] If $k=3$, then $$U^{(k)}_{n}= W^{(k)}_{n-2k+1}W^{(k)}_{n-2k+1}V^{(k)}_{n}kW^{(k)}_{n-2k+1}k,$$
		\item[{\rm (ii)}] If $k>3$, then $$U^{(k)}_{n}= W^{(k)}_{n-2k+1}W^{(k)}_{n-2k+1}V^{(k)}_{n}kW^{(k)}_{n-2k+1}0.$$
	\end{itemize}
\end{lem}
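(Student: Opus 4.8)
The plan is to run exactly the argument used for Lemma \ref{lemU=>}, specialised to $n=3k-2$. The only structural novelty here is that $n-3k+2=0$, so $W^{(k)}_{n-3k+2}=W^{(k)}_0=0$ and $k\oplus W^{(k)}_{n-3k+2}=k$; this is precisely why a bare digit $k$ appears in the statement exactly where $k\oplus W^{(k)}_{n-3k+2}$ appears in Lemma \ref{lemU=>}. Accordingly I would invoke Lemma \ref{lemVprefW}(ii) in place of part (iii).

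First I record that $W^{(k)}_a\lhd W^{(k)}_b$ whenever $a\le b$, which is immediate from Lemmas \ref{struct1} and \ref{struct}; in particular $W^{(k)}_{n-2k+3}\lhd W^{(k)}_{n-k}$ since $n-2k+3=k+1\le 2k-2=n-k$. Hence
$$W^{(k)}_{n-2k+1}W^{(k)}_{n-k}=W^{(k)}_{n-2k+1}W^{(k)}_{n-2k+3}[(W^{(k)}_{n-2k+3})^{-1}W^{(k)}_{n-k}],$$
and by Definition \ref{defU}, which is legitimate here by Lemma \ref{lem3w} with $n=3k-2$, the word $U^{(k)}_{n}$ is the prefix of length $4|W^{(k)}_{n-2k+1}|$ of the right-hand side. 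Next I expand $W^{(k)}_{n-2k+3}=W^{(k)}_{k+1}$ through (\ref{structeq}): for $k=3$ this reads $W^{(k)}_{n-2k+3}=W^{(k)}_{n-2k+2}W^{(k)}_{n-2k+1}(k\oplus W^{(k)}_{n-2k})$, while for $k>3$ it yields at least the prefix relation $W^{(k)}_{n-2k+2}W^{(k)}_{n-2k+1}W^{(k)}_{n-2k}\lhd W^{(k)}_{n-2k+3}$.

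Now I substitute (\ref{eqwn-2k+2V2}), that is $W^{(k)}_{n-2k+2}=W^{(k)}_{n-2k+1}V^{(k)}_{n}k$, so that $W^{(k)}_{n-2k+1}W^{(k)}_{n-2k+3}$ begins with the word $W^{(k)}_{n-2k+1}W^{(k)}_{n-2k+1}V^{(k)}_{n}kW^{(k)}_{n-2k+1}$, followed by $k\oplus W^{(k)}_{n-2k}$ when $k=3$ and by $W^{(k)}_{n-2k}$ when $k>3$. From (\ref{eqwn-2k+1V2}) we get $|V^{(k)}_{n}|=|W^{(k)}_{n-2k+1}|-2$, whence $|W^{(k)}_{n-2k+1}V^{(k)}_{n}k|=2|W^{(k)}_{n-2k+1}|-1$ and $|W^{(k)}_{n-2k+1}W^{(k)}_{n-2k+1}V^{(k)}_{n}kW^{(k)}_{n-2k+1}|=4|W^{(k)}_{n-2k+1}|-1$. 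Therefore $U^{(k)}_{n}$ is this word with exactly one further digit appended, namely the first digit of the word that immediately follows $W^{(k)}_{n-2k+2}W^{(k)}_{n-2k+1}$ inside $W^{(k)}_{n-2k+3}$: for $k=3$ this is the first digit of $k\oplus W^{(k)}_{n-2k}$, which is $k$ because $W^{(k)}_{n-2k}$ starts with $0$, giving (i); for $k>3$ it is the first digit of $W^{(k)}_{n-2k}$, which is $0$ by Lemmas \ref{struct1} and \ref{struct}, giving (ii).

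The one spot needing care is this last length count: one must check that $4|W^{(k)}_{n-2k+1}|$ lands exactly one digit past $W^{(k)}_{n-2k+1}W^{(k)}_{n-2k+1}V^{(k)}_{n}kW^{(k)}_{n-2k+1}$ rather than somewhere inside it, which reduces to the identity $|W^{(k)}_{n-2k+2}|=2|W^{(k)}_{n-2k+1}|-1$ forced by (\ref{eqwn-2k+2V2}) together with $|V^{(k)}_{n}|=|W^{(k)}_{n-2k+1}|-2$. Everything else is a line-by-line transcription of the proof of Lemma \ref{lemU=>}, which is precisely why the paper omits it.
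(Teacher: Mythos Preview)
Your proof is correct and follows exactly the approach the paper intends: the paper explicitly omits the argument, saying it is ``similar to the proof of Lemma \ref{lemU=>}'', and your write-up is precisely that transcription, with Lemma \ref{lemVprefW}(ii) in place of (iii) and the observation $k\oplus W^{(k)}_{n-3k+2}=k$ since $n-3k+2=0$. The length bookkeeping via $|V^{(k)}_{n}|=|W^{(k)}_{n-2k+1}|-2$ is the right way to pin down the single trailing digit, and your case split on $k=3$ versus $k>3$ (governing whether the block after $W^{(k)}_{n-2k+1}$ inside $W^{(k)}_{n-2k+3}$ is $k\oplus W^{(k)}_{n-2k}$ or $W^{(k)}_{n-2k}$) matches the structure of Lemma \ref{lemU=>} exactly.
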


\begin{cor}\label{corU}	
	If $A^2$ is a straddling square of $W^{(k)}_{n}$, then
	\begin{itemize}
 		\item[{\rm (i)}] $A^2\prec k\oplus U^{(k)}_{n}$,
		\item[{\rm(ii)}]  $c_s(A^2\ominus k, U^{(k)}_{n})\leq 2|W^{(k)}_{n-2k+1}|+\frac{1}{2}$.
\end{itemize}
\end{cor}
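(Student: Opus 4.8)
The plan is to read off both items from the position estimates for straddling squares already established, after transporting the picture from $W^{(k)}_n$ into the factor $W^{(k)}_{n-2k+1}W^{(k)}_{n-k}$, whose prefix of length $4|W^{(k)}_{n-2k+1}|$ is, by Definition \ref{defU}, the word $U^{(k)}_n$. First I would collect the reductions: since $A^2$ is a straddling square of $W^{(k)}_n$, Lemma \ref{lemsq<2k-1} forces $n\geq 2k-1$, so $U^{(k)}_n$ is defined (its length being legitimate by Lemma \ref{lem3w}), and by Corollary \ref{corcentersqcor} every digit of $A$ is at least $k$, so $A^2\ominus k$ is a well-defined word. Applying (\ref{structeq}) to $W^{(k)}_n$ and then to $W^{(k)}_{n-k+1}$ — that is, using (\ref{structsqeq}) — I would write $W^{(k)}_n=P\,\bigl(k\oplus(W^{(k)}_{n-2k+1}W^{(k)}_{n-k})\bigr)$, where $P$ is the prefix of $W^{(k)}_n$ of length $L:=|W^{(k)}_n|-|W^{(k)}_{n-2k+1}|-|W^{(k)}_{n-k}|$; then, by Lemma \ref{lem3w}, the positions $L+1,\dots,L+4|W^{(k)}_{n-2k+1}|$ of $W^{(k)}_n$ spell out exactly $k\oplus U^{(k)}_n$.

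For part (i) I would write $A^2=W^{(k)}_n[p,q]$, so $q-p+1=2|A|$ and $c_s(A^2,W^{(k)}_n)=\frac{p+q}{2}$. Lemma \ref{centersq3} gives $p>L$, hence $p\geq L+1$, while $q\leq|W^{(k)}_n|=L+|W^{(k)}_{n-2k+1}|+|W^{(k)}_{n-k}|$ is automatic, and this alone already places $A^2$ inside $k\oplus(W^{(k)}_{n-2k+1}W^{(k)}_{n-k})$. From $q=2\,c_s(A^2,W^{(k)}_n)-p$ together with $p\geq L+1$ and the bound $c_s(A^2,W^{(k)}_n)\leq L+2|W^{(k)}_{n-2k+1}|+\frac12$ of Lemma \ref{centersq2}, I then get $q\leq L+4|W^{(k)}_{n-2k+1}|$. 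Hence $A^2\ominus k$ occupies positions $[p-L,q-L]\subseteq[1,4|W^{(k)}_{n-2k+1}|]$ of $W^{(k)}_{n-2k+1}W^{(k)}_{n-k}$, i.e. $A^2\ominus k\prec U^{(k)}_n$, which is exactly (i).

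Part (ii) then comes for free: under the embedding just built, $A^2\ominus k$ occupies positions $[p-L,q-L]$ inside $U^{(k)}_n$, so $c_s(A^2\ominus k,U^{(k)}_n)=c_s(A^2,W^{(k)}_n)-L$, and Lemma \ref{centersq2} again yields the claimed $c_s(A^2\ominus k,U^{(k)}_n)\leq 2|W^{(k)}_{n-2k+1}|+\frac12$. I do not expect a genuine obstacle here — all the content is in Lemmas \ref{centersq2} and \ref{centersq3} — and the main point requiring attention is the coordinate bookkeeping: checking that the constant $2|W^{(k)}_{n-2k+1}|$ of Lemma \ref{centersq2} is exactly what forces the square into the prefix of size $4|W^{(k)}_{n-2k+1}|$ fixed in Definition \ref{defU}, keeping the $c_s$-convention consistent throughout, and noting that everything still goes through in the boundary case $n=2k-1$ (where $W^{(k)}_{n-2k+1}=0$ and the only straddling square is $kk$).
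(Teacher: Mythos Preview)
Your proposal is correct and follows the same approach as the paper: part (i) is obtained from Lemma~\ref{centersq3} together with the decomposition~(\ref{structsqeq}), and part (ii) from Lemma~\ref{centersq2} after the coordinate shift by $L$. Your write-up is in fact more complete than the paper's one-line justifications, since you make explicit that bounding the right endpoint $q$ in part (i) requires Lemma~\ref{centersq2} as well (via $q=2c_s-p$ and $p\geq L+1$), whereas the paper cites only Lemma~\ref{centersq3} for (i).
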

\begin{proof}{	According to Definition \ref{defU} we have	
\begin{itemize}
	\item[{\rm (i)}] This is the direct consequence of Lemma \ref{centersq3} and equation (\ref{structsqeq}).
	\item[{\rm (ii)}] This can be deducted easily from Lemma \ref{centersq2}.
\end{itemize}}
\end{proof}

\begin{lem}\label{lem*V*}
	Let $n< 3k-1$, then the word $(n-2k+1) V^{(k)}_{n} (n-2k+1)$ occurs exactly once in $U^{(k)}_{n}$.
\end{lem}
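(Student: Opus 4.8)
Write $m:=n-2k+1$; the hypothesis $2k-1<n<3k-1$ gives $1\leq m\leq k-1$. The plan is to unfold $U^{(k)}_n$ so that it is written entirely in terms of $V^{(k)}_n$ and the single digit $m$, then to count how often the digit $m$ occurs in $U^{(k)}_n$, and finally to read off from that count that $m\,V^{(k)}_n\,m$ can sit inside $U^{(k)}_n$ in exactly one place. I carry this out for $2k-1<n<3k-2$, i.e.\ $1\leq m\leq k-2$; the remaining value $n=3k-2$ is treated identically, with Lemma~\ref{lemU==} and Lemma~\ref{lemVprefW}(ii) replacing Lemma~\ref{lemU=<} and Lemma~\ref{lemVprefW}(i). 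For the first step, Lemma~\ref{lemVprefW}(i) gives $W^{(k)}_m=V^{(k)}_n\,m$ and $W^{(k)}_{m+1}=W^{(k)}_m\,V^{(k)}_n\,(m+1)$, and feeding these into Lemma~\ref{lemU=<} yields
\[
U^{(k)}_n \;=\; W^{(k)}_m\,W^{(k)}_{m+1}\,W^{(k)}_m \;=\; V^{(k)}_n\,m\,V^{(k)}_n\,m\,V^{(k)}_n\,(m+1)\,V^{(k)}_n\,m .
\]
Put $\ell:=|W^{(k)}_m|$, so $|V^{(k)}_n|=\ell-1$ and $|U^{(k)}_n|=4\ell$.

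Next I would count the occurrences of the digit $m$ in $U^{(k)}_n$. Since $m=n-2k+1\geq 1$, Lemma~\ref{lastdig} tells us that $m$ is the largest digit of $W^{(k)}_m$ and appears there exactly once, at its last position. Every factor $W^{(k)}_j$ occurring in $V^{(k)}_n=W^{(k)}_{n-2k}W^{(k)}_{n-2k-1}\ldots W^{(k)}_0$ has index $j\leq n-2k=m-1$, so by Lemma~\ref{lastdig} all of its digits are at most $m-1<m$; hence $|V^{(k)}_n|_m=0$. Consequently, in the unfolding above the digit $m$ occurs in $U^{(k)}_n$ exactly three times, namely at the ends of the first two blocks $V^{(k)}_n\,m$ and at the very end of $U^{(k)}_n$, i.e.\ precisely at positions $\ell$, $2\ell$ and $4\ell$.

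Finally I would conclude. Suppose $(n-2k+1)\,V^{(k)}_n\,(n-2k+1)=m\,V^{(k)}_n\,m$ occurs in $U^{(k)}_n$ with its first letter at position $p$; then its last letter occupies position $p+|V^{(k)}_n|+1=p+\ell$. Because $|V^{(k)}_n|_m=0$, neither boundary letter $m$ can be absorbed into the central block $V^{(k)}_n$, so both $p$ and $p+\ell$ lie in $\{\ell,2\ell,4\ell\}$, and also $p+\ell\leq|U^{(k)}_n|=4\ell$; these force $p=\ell$. Conversely the unfolding shows $U^{(k)}_n[\ell+1,2\ell-1]=V^{(k)}_n$, so $U^{(k)}_n[\ell,2\ell]=m\,V^{(k)}_n\,m$ is a genuine occurrence. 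Therefore $(n-2k+1)\,V^{(k)}_n\,(n-2k+1)$ occurs in $U^{(k)}_n$ exactly once.

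The part needing the most care is the first step: one has to manipulate the statements of Lemma~\ref{lemU=<} and Lemma~\ref{lemVprefW} into the displayed normal form, and separately run the same manipulation for $n=3k-2$ via Lemma~\ref{lemU==}. Once that normal form is in hand the rest is a short counting argument whose only real input is the identity $|V^{(k)}_n|_m=0$: since $m$ never occurs inside $V^{(k)}_n$, an occurrence of $m\,V^{(k)}_n\,m$ must be pinned between two of the three occurrences of the digit $m$ in $U^{(k)}_n$, and only one such pair lies at the required distance $\ell$. I do not anticipate any obstacle beyond this bookkeeping.
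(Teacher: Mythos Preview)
For $2k-1<n<3k-2$ your argument is correct and is essentially the paper's: both derive the normal form
\[
U^{(k)}_n=V^{(k)}_n\,m\,V^{(k)}_n\,m\,V^{(k)}_n\,(m{+}1)\,V^{(k)}_n\,m
\]
from Lemma~\ref{lemU=<} together with~(\ref{eqwn-2k+1V}), and both use $|V^{(k)}_n|_m=|V^{(k)}_n|_{m+1}=0$. The only cosmetic difference is that you count occurrences of the digit $m$ (three of them, at positions $\ell,2\ell,4\ell$) whereas the paper counts occurrences of the block $V^{(k)}_n$ (four of them, filling the gaps between those positions); either count gives the conclusion at once.

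Your handling of $n=3k-2$, however, is not just bookkeeping: the assertion actually fails there. For $n=3k-2$ Definition~\ref{defV} gives $V^{(k)}_n=W^{(k)}_{k-2}\cdots W^{(k)}_{1}$, so $|V^{(k)}_n|=\ell-2$ and, by Lemma~\ref{struct1}, $W^{(k)}_m=W^{(k)}_{k-1}=V^{(k)}_n\,0\,m$ rather than $V^{(k)}_n\,m$. Substituting into Lemma~\ref{lemU==} yields
\[
U^{(k)}_n=V^{(k)}_n\,0\,m\;V^{(k)}_n\,0\,m\;V^{(k)}_n\,k\;V^{(k)}_n\,0\,m\;x\qquad(x\in\{0,k\}),
\]
so the three copies of $m$ now sit at positions $\ell,\,2\ell,\,4\ell-1$, and no two of these differ by $|V^{(k)}_n|+1=\ell-1$. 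Hence $m\,V^{(k)}_n\,m$ does not occur in $U^{(k)}_n$ at all; concretely, for $k=3$ one gets $U^{(3)}_7=0102010201301023$, and $m\,V^{(3)}_7\,m=2012$ is not a factor. This is a defect in the stated hypothesis rather than in your main argument: the bound should read $n<3k-2$. Indeed the paper's own proof invokes only Lemma~\ref{lemU=<}, which already requires $2k-1<n<3k-2$, and the sole application of Lemma~\ref{lem*V*} (inside the proof of Lemma~\ref{centersq}) is made under the assumption $n<3k-2$.
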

\begin{proof}
	{Using Lemma \ref{lemU=<} and Equation (\ref{eqwn-2k+1V}) we have
		\begin{equation}\label{eq*V*}
		U^{(k)}_{n}(k)=V^{(k)}_{n}(n-2k+1)V^{(k)}_{n}(n-2k+1)V^{(k)}_{n}(n-2k+2)V^{(k)}_{n}(n-2k+1).
		\end{equation}
By Deifinition\ref{defV}, it is clear that $0\lhd V^{(k)}_{n}$, $|V^{(k)}_{n}|_{n-2k+1}=0$  and $|V^{(k)}_{n}|_{n-2k+2}=0$.
Hence, using (\ref{eq*V*})
 we conclude that $ V^{(k)}_{n}$ occurs exactly four times in $U^{(k)}_{n}$. By Equation (\ref{eq*V*}),  the word $(n-2k+1) V^{(k)}_{n} (n-2k+1)$ occurs exactly once in $U^{(k)}_{n}$.
}
\end{proof}

\begin{lem}\label{lem*V0}
	Let $n\leq 3k-1$, then the word $(n-2k+1) V^{(k)}_{n} 0$ occurs exactly once in $U^{(k)}_{n}$.
\end{lem}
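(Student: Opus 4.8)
The plan is to reproduce the proof of Lemma~\ref{lem*V*} almost verbatim, now looking for the digit $0$ after $V^{(k)}_n$ in place of a second copy of the digit $n-2k+1$. Exactly as there, both $V^{(k)}_n$ and the internal shape of $U^{(k)}_n$ change at $n=3k-2$, so I would split into the three cases $2k-1<n<3k-2$, $n=3k-2$ and $n=3k-1$, reading off the explicit form of $U^{(k)}_n$ from Lemmas~\ref{lemU=<}, \ref{lemU==} and \ref{lemU=>} respectively. In each case the first step is to replace every occurrence of $W^{(k)}_{n-2k+1}$ in that formula by its decomposition relative to $V^{(k)}_n$ provided by Lemma~\ref{lemVprefW}, namely (\ref{eqwn-2k+1V}), (\ref{eqwn-2k+1V2}) or (\ref{eqwn-2k+1V3}). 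This rewrites $U^{(k)}_n$ as four copies of $V^{(k)}_n$ interleaved with three short separator words over the digits $0$, $n-2k+1$, $n-2k+2$ (plus, when $n\ge 3k-2$, a few digits $\ge k$); in the first case this is precisely the identity (\ref{eq*V*}).

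The second step is the counting device of Lemma~\ref{lem*V*}. From Definition~\ref{defV} one has $0\lhd V^{(k)}_n$, $|V^{(k)}_n|_{n-2k+1}=0$ and $|V^{(k)}_n|_{n-2k+2}=0$, and when $n\ge 3k-2$ every digit of $V^{(k)}_n$ is smaller than $k$. Since any occurrence of $V^{(k)}_n$ inside $U^{(k)}_n$ must begin with the digit $0$ and cannot contain the digit $n-2k+1$, it is forced to coincide with one of the four aligned copies in the factorization above; consequently every occurrence of the word $(n-2k+1)\,V^{(k)}_n\,0$ in $U^{(k)}_n$ consists of one of those four copies, the symbol immediately before it, and the symbol immediately after it. It then suffices to inspect the explicit factorization in each of the three cases and check that exactly one of the four copies of $V^{(k)}_n$ is simultaneously immediately preceded by the digit $n-2k+1$ and immediately followed by the digit $0$, the remaining three failing because they are the first or the last copy in $U^{(k)}_n$, or are followed by $n-2k+1$ or $n-2k+2$ rather than by $0$. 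This yields exactly one occurrence, as claimed.

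The part I expect to take the most care is this final inspection: the three lemmas \ref{lemU=<}, \ref{lemU==} and \ref{lemU=>} produce genuinely different separator words — in particular the last separator is a single $n-2k+1$ when $2k-1<n<3k-2$, ends in $k$ when $n=3k-2$, and is a prefix of $k\oplus W^{(k)}_{n-3k+1}$ when $n=3k-1$ — so in each regime one has to verify both that the desired occurrence is actually present and that no spurious one appears, the latter relying on Lemma~\ref{lastdig} to pin down the (very few) positions of $n-2k+1$ inside $U^{(k)}_n$ and on $0\lhd V^{(k)}_n$ to control which digit can follow each copy of $V^{(k)}_n$.
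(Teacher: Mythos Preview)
Your overall strategy matches the paper's: expand $U^{(k)}_n$ via the explicit formulas, locate the four aligned copies of $V^{(k)}_n$, and read off which one is flanked by $(n-2k+1)$ on the left and $0$ on the right. The paper's proof invokes Lemma~\ref{lemVunique} rather than the digit-counting argument of Lemma~\ref{lem*V*} to pin down those four copies, but this is a minor variation.

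There is, however, a genuine problem with your first case $2k-1<n<3k-2$. Carrying out your own inspection on the identity~(\ref{eq*V*}),
\[
U^{(k)}_{n}=V^{(k)}_{n}\,(n{-}2k{+}1)\,V^{(k)}_{n}\,(n{-}2k{+}1)\,V^{(k)}_{n}\,(n{-}2k{+}2)\,V^{(k)}_{n}\,(n{-}2k{+}1),
\]
you will find that every copy of $V^{(k)}_n$ is followed by $n-2k+1$ or $n-2k+2$, never by $0$; hence $(n-2k+1)\,V^{(k)}_n\,0$ does not occur at all in this range, and the claim you are trying to verify is simply false there. The paper's own proof does not treat this case either: it only invokes Lemmas~\ref{lemU==} and~\ref{lemU=>}, i.e.\ the regime $n\ge 3k-2$, which is also the range in which the result is actually applied in Lemma~\ref{centersq}. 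The stated hypothesis ``$n\le 3k-1$'' is a typo for ``$n\ge 3k-2$''. With that correction your plan goes through exactly as you describe, and there is no reason to stop at $n=3k-1$: Lemma~\ref{lemU=>} covers every $n>3k-2$ uniformly.
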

\begin{proof}
	{By Deifinition\ref{defV}, it is clear that $0\lhd V^{(k)}_{n}$, $|V^{(k)}_{n}|_{n-2k+1}=0$  and $|V^{(k)}_{n}|_{n-2k+2}=0$.
		Hence, using Lemmas \ref{lemU==} and \ref{lemU=>} and using Lemma \ref{lemVunique}, we conclude that  $ V^{(k)}_{n}$ occurs exactly four times in $U^{(k)}_{n}$. Therefore, by Equations (\ref{eqwn-2k+1V2}) and (\ref{eqwn-2k+1V3}) it is easy to see that $(n-2k+1) V^{(k)}_{n} 0$ occurs exactly once in $U^{(k)}_{n}$.
		
	}
\end{proof}

\begin{lem}\label{centersq}
	Let $n\geq 2k-1$ and $A^2$ be a straddling square of $W^{(k)}_{n}$ and let $A'^2=A^2\ominus k$. Then
	\begin{equation}\label{eqcenterU}
	c_s(A'^2, U^{(k)}_{n})\leq |W^{(k)}_{n-2k+1}|+|V^{(k)}_{n}|+\frac{1}{2}.
	\end{equation}
\end{lem}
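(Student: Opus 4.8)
Assume $n>2k-1$ (for $n=2k-1$ the only straddling square is $kk$ by Lemma~\ref{lemsq=2k-1} and the estimate is checked directly, reading $V^{(k)}_{2k-1}$ as the empty word). Put $X:=W^{(k)}_{n-2k+1}$, $L:=|X|$, $d:=n-2k+1$, and recall from Lemma~\ref{lemVprefW} the factorisation $X=V^{(k)}_{n}S_n$, where $|S_n|$ equals $1$, $2$, or $|W^{(k)}_{n-3k+2}|+|W^{(k)}_{n-3k+1}|$ according as $n<3k-2$, $n=3k-2$, $n>3k-2$; since $|V^{(k)}_{n}|=L-|S_n|$, the inequality to prove is $c_s(A'^2,U^{(k)}_{n})\le 2L-|S_n|+\frac{1}{2}$, which sharpens the bound $c_s(A'^2,U^{(k)}_{n})\le 2L+\frac{1}{2}$ from Corollary~\ref{corU}(ii) by $|S_n|$. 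The plan is a proof by contradiction built on facts that are all available: $A'^2\prec U^{(k)}_{n}$ (Corollary~\ref{corU}(i)); $U^{(k)}_{n}$ has $XXV^{(k)}_{n}$ as a prefix, and the letter right after this prefix copy of $V^{(k)}_{n}$ is $n-2k+2$ if $n<3k-2$ and $k$ if $n\ge 3k-2$ (Lemmas~\ref{lemU=<}, \ref{lemU==}, \ref{lemU=>}); $d$ is the last letter of $X$ and its only occurrence there, so $d$ occurs in $U^{(k)}_{n}[1,2L]=XX$ exactly at positions $L$ and $2L$, while $V^{(k)}_{n}$ (all of whose letters are $<d$) contains no $d$ (Lemma~\ref{lastdig}); and $V^{(k)}_{n}$ occurs in $X$ exactly once, as its prefix (Lemma~\ref{lemVunique}). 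Finally, since $A^2$ is a straddling square of $W^{(k)}_{n}$, $A$ contains $n-k+1$, so $A'=A\ominus k$ contains $d$.

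Suppose $c_s(A'^2,U^{(k)}_{n})>2L-|S_n|+\frac{1}{2}$. Write $A'^2=A'_1A'_2$ and let $e$ be the end position of $A'_1$ in $U^{(k)}_{n}$, so $c_s(A'^2,U^{(k)}_{n})=e+\frac{1}{2}$; together with Corollary~\ref{corU}(ii) this forces $e\in[\,2L-|S_n|+1,\,2L\,]$, i.e.\ $A'_1$ ends inside the $S_n$-suffix of the second $X$-block of $U^{(k)}_{n}$. Then $A'_1$ is a factor of $XX=U^{(k)}_{n}[1,2L]$, and since it contains $d$ it must contain position $L$ or position $2L$; I would split on which one it catches and on whether $|A'|\le L$. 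If $A'_1$ catches $2L$ and $|A'|\le L$, then $A'_1$ is a suffix of $X$, hence has $d$ only as its last letter, so $A'_2=U^{(k)}_{n}[2L+1,2L+|A'|]$ must also end in $d$; reading off that letter from the prefix-shape of $U^{(k)}_{n}$ and, when $n>3k-2$, matching the $V^{(k)}_{n}$-prefixes of the two copies via Lemma~\ref{lemVunique}, gives a contradiction (the last letter of $A'_2$ is $n-2k+2$, or $0$, or the forced position of $V^{(k)}_{n}$ inside $X$ is not its prefix). If $A'_1$ catches $2L$ and $|A'|>L$, then $A'_1$ contains both $d$'s at $L$ and $2L$, so $A'_2$ contains two $d$'s at distance $L$; a short position count (Lemma~\ref{lastdig} plus the block decomposition of $U^{(k)}_{n}$) shows no two occurrences of $d$ past position $2L$ are $L$ apart. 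If $A'_1$ catches $L$ but not $2L$, the window for $e$ is empty when $|S_n|=1$; otherwise $A'_1$ starts with a suffix of $X$ crossing the block boundary, and comparing it with $A'_2$ (which begins inside that $S_n$-suffix), using Corollary~\ref{corV0}, forces either a collision of two distinct digits, or an occurrence of $V^{(k)}_{n}$ inside $X$ at a non-prefix position (contradicting Lemma~\ref{lemVunique}), or a factor $W^{(k)}_{m}W^{(k)}_{m}$ of $W^{(k)}_{n-k}$ with $m<n-k$ (contradicting Lemma~\ref{ww<w}). All cases being impossible, the assumption is untenable, which proves the lemma.

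The case $|S_n|=1$, i.e.\ $n<3k-2$, is essentially immediate: then $2L-|S_n|+\frac{1}{2}=2L-\frac{1}{2}$, so only the value $c_s(A'^2,U^{(k)}_{n})=2L+\frac{1}{2}$ (i.e.\ $e=2L$) must be excluded, which the first two sub-cases do in a few lines. The main obstacle is the regime $n\ge 3k-1$, i.e.\ $d\ge k$: there $V^{(k)}_{n}$ is long and, crucially, the block $k\oplus W^{(k)}_{n-3k+2}$ following it inside $U^{(k)}_{n}$ does contain the digit $d=k\oplus(n-3k+1)$, possibly several times, so the convenient count ``$d$ occurs exactly three times in $U^{(k)}_{n}$'' that settles $n\le 3k-2$ no longer holds. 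The heart of the argument is to rule out $A'_2$ absorbing one of these extra occurrences of $d$ without destroying $A'_1=A'_2$; there the uniqueness of the prefix occurrence of $V^{(k)}_{n}$ in $X$ (Lemma~\ref{lemVunique}) and, ultimately, the nonexistence of a square $W^{(k)}_{m}W^{(k)}_{m}$ (Lemma~\ref{ww<w}) carry the load, and keeping the index shifts of $V^{(k)}_{n}$ and $S_n$ consistent across the three defining cases of Definition~\ref{defV} is the bulk of the bookkeeping.
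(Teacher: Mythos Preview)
Your plan is sound for $n\le 3k-2$, where the single digit $d=n-2k+1$ appears at most three times in $U^{(k)}_{n}$ and the position bookkeeping is short. But the argument, as you yourself concede in the last paragraph, does not close for $n\ge 3k-1$: there the block $k\oplus W^{(k)}_{n-3k+2}$ contributes further copies of $d$, so the ``short position count'' you invoke in Case~2 is neither short nor obviously true, and the fallback you sketch via Lemmas~\ref{lemVunique} and~\ref{ww<w} is only asserted, not carried out. That is a genuine gap; your three sub-cases together with the admitted ``main obstacle'' do not yet constitute a proof in the regime $d\ge k$.

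The paper's proof avoids this difficulty by choosing a longer marker. Rather than tracking the letter $d$, it uses the word $(n-2k+1)\,V^{(k)}_{n}\,(n-2k+1)$ when $n<3k-2$ and $(n-2k+1)\,V^{(k)}_{n}\,0$ when $n\ge 3k-2$, and establishes separately (Lemmas~\ref{lem*V*} and~\ref{lem*V0}) that this marker occurs \emph{exactly once} in $U^{(k)}_{n}$; the uniqueness reduces to counting the four copies of $V^{(k)}_{n}$ in $U^{(k)}_{n}$ and checking which one is correctly flanked, a uniform argument independent of how many times $d$ itself appears. The present lemma then falls in one stroke: if $c_s(A'^2,U^{(k)}_{n})>L+|V^{(k)}_{n}|+\tfrac12$, the straddling hypothesis forces $A'_1$ to start at a position $\le L$ and end at a position $\ge L+|V^{(k)}_{n}|+1$, so the marker $U^{(k)}_{n}[L,\,L+|V^{(k)}_{n}|+1]$ sits inside $A'_1$, hence also inside $A'_2$, yielding two distinct occurrences in $U^{(k)}_{n}$---contradiction. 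The idea you are missing is precisely this upgrade from the single letter $d$ to the block $d\,V^{(k)}_{n}\,c$: it makes the uniqueness uniform in $n$ and eliminates the case split that defeats your approach.
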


\begin{proof}
{  For contrary suppose that $c_s(A'^2, U^{(k)}_{n})>|W^{(k)}_{n-2k+1}|+|V^{(k)}_{n}|+\frac{1}{2}$. We divide the proof in the following cases:
	\begin{itemize}
		\item If $n<3k-2$, then by Lemma \ref{lemU=<} and Equation (\ref{eqwn-2k+1V}) we conclude that
		\begin{equation}\label{eqprefU1}
	 W^{(k)}_{n-2k+1}V^{(k)}_{n}(n-2k+1)\lhd U^{(k)}_{n}.
	 \end{equation}
		 Using (\ref{eqprefU1}) and the fact that $A'^2\oplus k$ is a straddling square of $W^{(k)}_{n}$, we conclude that
		 $(n-2k+1)V^{(k)}_{n}(n-2k+1)$ should occurs at least twice in $A'^2\prec U^{(k)}_{n}$. This is a contradiction with Lemma \ref{lem*V*}.
		\item  If $n<3k-2$, then by Lemma \ref{lemU=<} and Equation (\ref{eqwn-2k+1V}) we conclude that
		\begin{equation}\label{eqprefU2}
		W^{(k)}_{n-2k+1}V^{(k)}_{n}0\lhd U^{(k)}_{n}.
		\end{equation}
		Using (\ref{eqprefU2}) and the fact that $A'^2\oplus k$ is a straddling square of $W^{(k)}_{n}$, we conclude that
		$(n-2k+1)V^{(k)}_{n}0$ should occurs at least twice in $A'^2\prec U^{(k)}_{n}$. This is a contradiction with Lemma \ref{lem*V0}.
	\end{itemize}	
}
\end{proof}

The following corollary is the direct consequence of Lemma \ref{centersq}.
\begin{cor}\label{centersq5}
	let $A^2$ be a straddling square of $W^{(k)}_{n}$. Then $c_s(A^2,W^{(k)}_{n})< |W^{(k)}_{n}|-|W^{(k)}_{n-k}|+|V^{(k)}_{n}|$.
\end{cor}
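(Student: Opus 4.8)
The plan is to transport the bound of Lemma~\ref{centersq}, which is stated inside $U^{(k)}_{n}$, back into $W^{(k)}_{n}$ by exhibiting $k\oplus U^{(k)}_{n}$ as an explicit factor of $W^{(k)}_{n}$ and matching up the two occurrences of the square.

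First I would locate the copy. Rewriting $W^{(k)}_{n}$ as in~(\ref{structsqeq}) shows that $(k\oplus W^{(k)}_{n-2k+1})(k\oplus W^{(k)}_{n-k})=k\oplus\bigl(W^{(k)}_{n-2k+1}W^{(k)}_{n-k}\bigr)$ is a suffix of $W^{(k)}_{n}$; since by Definition~\ref{defU} the word $U^{(k)}_{n}$ is the prefix of $W^{(k)}_{n-2k+1}W^{(k)}_{n-k}$ of length $4|W^{(k)}_{n-2k+1}|$, the word $k\oplus U^{(k)}_{n}$ is the factor of $W^{(k)}_{n}$ occupying the positions $\ell+1,\dots,\ell+4|W^{(k)}_{n-2k+1}|$, where $\ell:=|W^{(k)}_{n}|-|W^{(k)}_{n-2k+1}|-|W^{(k)}_{n-k}|$. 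By Corollary~\ref{corU}(i) the straddling occurrence of $A^{2}$ in $W^{(k)}_{n}$ sits inside this factor, and by Lemma~\ref{centersq3} its left endpoint exceeds $\ell$; hence that occurrence is precisely the translate, by the integer $\ell$, of an occurrence of ${A'}^{2}:=A^{2}\ominus k$ lying wholly inside $U^{(k)}_{n}$. Since an integer translation leaves the center of a square unchanged, $c_{s}(A^{2},W^{(k)}_{n})=c_{s}({A'}^{2},U^{(k)}_{n})+\ell$.

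Then I would substitute and simplify. Lemma~\ref{centersq} gives $c_{s}({A'}^{2},U^{(k)}_{n})\le|W^{(k)}_{n-2k+1}|+|V^{(k)}_{n}|+\frac{1}{2}$, and adding $\ell$ cancels the two occurrences of $|W^{(k)}_{n-2k+1}|$, leaving $c_{s}(A^{2},W^{(k)}_{n})\le|W^{(k)}_{n}|-|W^{(k)}_{n-k}|+|V^{(k)}_{n}|+\frac{1}{2}$. Using that $c_{s}(A^{2},W^{(k)}_{n})$ is a half-integer, and reading off the value of $|V^{(k)}_{n}|$ from Lemma~\ref{lemVprefW} (it is $|W^{(k)}_{n-2k+1}|-1$, $|W^{(k)}_{n-2k+1}|-2$, or $|W^{(k)}_{n-2k+1}|-|W^{(k)}_{n-3k+2}|-|W^{(k)}_{n-3k+1}|$ according to the range of $n$), this collapses to the strict bound $c_{s}(A^{2},W^{(k)}_{n})<|W^{(k)}_{n}|-|W^{(k)}_{n-k}|+|V^{(k)}_{n}|$ claimed in the statement; the one spot where strictness has to be argued rather than just read off is the extremal case of Lemma~\ref{centersq}, which is excluded since it would force $(n-2k+1)V^{(k)}_{n}(n-2k+1)$ or $(n-2k+1)V^{(k)}_{n}0$ to occur twice in $U^{(k)}_{n}$, against Lemma~\ref{lem*V*} or Lemma~\ref{lem*V0}.

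I expect the only delicate point to be the alignment in the first step --- making sure the occurrence of $A^{2}$ in question is genuinely confined to $k\oplus U^{(k)}_{n}$ and does not run past its right end --- which is exactly what Corollary~\ref{corU}, resting on Lemmas~\ref{centersq2} and~\ref{centersq3}, was set up to guarantee. Everything downstream is arithmetic with the $k$-bonacci numbers.
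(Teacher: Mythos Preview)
Your approach is exactly the paper's: the paper states only that Corollary~\ref{centersq5} is ``the direct consequence of Lemma~\ref{centersq}'', and you have filled in that deduction by locating $k\oplus U^{(k)}_{n}$ inside $W^{(k)}_{n}$ via~(\ref{structsqeq}), computing the offset $\ell=|W^{(k)}_{n}|-|W^{(k)}_{n-2k+1}|-|W^{(k)}_{n-k}|$, and adding $\ell$ to the bound of Lemma~\ref{centersq}. That part is correct and is precisely what the one-line proof intends.

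The only place your argument does not go through is the final sharpening from $c_{s}(A^{2},W^{(k)}_{n})\le |W^{(k)}_{n}|-|W^{(k)}_{n-k}|+|V^{(k)}_{n}|+\tfrac12$ to the strict inequality in the statement. You assert that the extremal case of Lemma~\ref{centersq} is excluded by Lemmas~\ref{lem*V*} and~\ref{lem*V0}, but the proof of Lemma~\ref{centersq} only derives a contradiction from $c_{s}({A'}^{2},U^{(k)}_{n})>|W^{(k)}_{n-2k+1}|+|V^{(k)}_{n}|+\tfrac12$; it says nothing against equality. In fact equality \emph{is} attained: taking $j=|V^{(k)}_{n}|$ in Theorem~\ref{thmsqWn} and reading~(\ref{eqv1v2v'}) with $V_{2}=\epsilon$ gives a straddling square whose center in $P^{(k)}_{n}$ (hence in $U^{(k)}_{n}$) is exactly $|W^{(k)}_{n-2k+1}|+|V^{(k)}_{n}|+\tfrac12$. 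So the strict ``$<$'' in the corollary is a minor slip of the paper rather than something you should try to prove; the transport you carried out already yields the bound that is actually used downstream (compare Lemma~\ref{lemA2inP}\,(i) and the range $0\le j\le |V^{(k)}_{n}|$ in Theorem~\ref{thmsqWn}).
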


\begin{lem}\label{lemA2inP}
	Let $n\geq 2k-1$, $P^{(k)}_{n}=W^{(k)}_{n-2k+1} W^{(k)}_{n-2k+1}V^{(k)}_{n}$. Then $A^2$ is a straddling square of $W^{(k)}_{n}$ if and only if
	$A'^2=A^2\ominus k$ is a square of $P^{(k)}_{n}$ satisfying following properties:
	\begin{itemize}
		\item [{\rm(i)}]
	$	|W^{(k)}_{n-2k+1}|\leq c_s(A'^2, P^{(k)}_{n})\leq |W^{(k)}_{n-2k+1}|+|V^{(k)}_{n}|$,
		\item [{\rm(ii)}] Let $A'^2=P^{(k)}_{n}[t,t+|A'^2|]$. Then, $t< |W^{(k)}_{n-2k+1}|$.
	\end{itemize}
\end{lem}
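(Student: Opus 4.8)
The plan is to identify the straddling squares of $W^{(k)}_n$ with those squares of $U^{(k)}_n$ that already lie in its prefix $P^{(k)}_n$, using the location estimates for straddling squares obtained above. For the forward implication I would start from a straddling square $A^2$ of $W^{(k)}_n$ and put $A'^2=A^2\ominus k$. By Corollary \ref{corU}(i) we have $A^2\prec k\oplus U^{(k)}_n$, so $A'^2$ occurs in $U^{(k)}_n$, and by Lemmas \ref{lemU=<}, \ref{lemU==} and \ref{lemU=>} the word $P^{(k)}_n=W^{(k)}_{n-2k+1}W^{(k)}_{n-2k+1}V^{(k)}_n$ is a prefix of $U^{(k)}_n$. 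Identifying $k\oplus U^{(k)}_n$ with the appropriate prefix of the suffix $(k\oplus W^{(k)}_{n-2k+1})(k\oplus W^{(k)}_{n-k})$ of $W^{(k)}_n$ described by (\ref{structsqeq}), the estimates of Lemma \ref{lemcs>} and Corollary \ref{centersq5} become $|W^{(k)}_{n-2k+1}|<c_s(A'^2,U^{(k)}_n)<|W^{(k)}_{n-2k+1}|+|V^{(k)}_n|$, and the definition of a straddling factor together with Lemma \ref{centersq3} places the left end of this occurrence of $A'^2$ inside the first block $W^{(k)}_{n-2k+1}$ of $U^{(k)}_n$, which is (ii); the centre bound already shows that the first copy of $A'$ ends before position $|W^{(k)}_{n-2k+1}|+|V^{(k)}_n|$ and hence lies inside $P^{(k)}_n$.

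It then remains to rule out that the second copy of $A'$ overruns $P^{(k)}_n$. If it did, the second copy would reach into the tail of $U^{(k)}_n$ that follows the block $W^{(k)}_{n-2k+1}W^{(k)}_{n-2k+1}V^{(k)}_n$; by the explicit descriptions in Lemmas \ref{lemU=<}, \ref{lemU==} and \ref{lemU=>} this tail begins with the digit $n-2k+2$ when $2k-1<n<3k-2$ and with $k$ when $n\ge 3k-2$. In the first two ranges that digit is larger than every digit of $P^{(k)}_n$, so it does not occur there; in the range $n>3k-2$, where $W^{(k)}_{n-2k+1}$ itself contains $k$, one uses instead that the copy of $V^{(k)}_n$ standing at the right end of $P^{(k)}_n$ is immediately followed by that $k$, while by Lemma \ref{lemVunique} and Corollary \ref{corV0} every occurrence of $V^{(k)}_n$ inside $P^{(k)}_n$ is followed by $0$ (or is a suffix of $P^{(k)}_n$), so the factor $V^{(k)}_n k$ is absent from $P^{(k)}_n$. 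In every case the second copy of $A'$ would contain a factor not occurring in $P^{(k)}_n$, whereas the first copy lies in $P^{(k)}_n$ and equals the second; this contradiction gives $A'^2\prec P^{(k)}_n$, and (i) is just the centre bound restated inside $P^{(k)}_n$.

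For the converse I would take a square $A'^2$ of $P^{(k)}_n$ satisfying (i) and (ii) and set $A^2=A'^2\oplus k$. Since $P^{(k)}_n$ is a prefix of $U^{(k)}_n$ and, by (\ref{structsqeq}), $k\oplus U^{(k)}_n$ is a factor of $W^{(k)}_n$ (the prefix of length $4|W^{(k)}_{n-2k+1}|$ of the suffix $(k\oplus W^{(k)}_{n-2k+1})(k\oplus W^{(k)}_{n-k})$), the square $A^2$ occurs in $W^{(k)}_n$ at the position matching the given occurrence of $A'^2$, and there the boundary between $\prod_{i=n-1}^{n-k+1}W^{(k)}_i$ and $k\oplus W^{(k)}_{n-k}$ sits at position $|W^{(k)}_{n-2k+1}|$ of $U^{(k)}_n$. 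Condition (ii) puts the start of $A^2$ strictly before this boundary, and (i) and (ii) together put the right end of $A^2$ strictly past it, so $A^2$ meets both sides of the boundary; being a square, it is then a straddling square of $W^{(k)}_n$ in the sense of Definition \ref{defstrad}.

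The main obstacle is the ``no overrun'' step inside the forward implication: the centre and endpoint estimates by themselves (Lemmas \ref{lemcs>}, \ref{centersq3} and Corollary \ref{centersq5}) leave slack of order $|V^{(k)}_n|$, so one genuinely needs the combinatorial uniqueness of the markers $V^{(k)}_n$, $(n-2k+1)V^{(k)}_n(n-2k+1)$ and $(n-2k+1)V^{(k)}_n 0$ inside $U^{(k)}_n$ (Lemmas \ref{lemVunique}, \ref{lem*V*}, \ref{lem*V0} and Corollary \ref{corV0}), together with the case split $n<3k-2$, $n=3k-2$, $n>3k-2$, to exclude a square that starts inside the first $W^{(k)}_{n-2k+1}$ yet still reaches beyond the third copy of $V^{(k)}_n$; the remaining bookkeeping (translating positions between $W^{(k)}_n$, $U^{(k)}_n$ and $P^{(k)}_n$, and the degenerate value $n=2k-1$) is routine.
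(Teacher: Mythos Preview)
Your proposal is correct and follows the paper's own approach: the same center and endpoint bounds (Lemmas \ref{lemcs>}, \ref{centersq3}, \ref{centersq} and Corollary \ref{corU}) for the forward direction, and the explicit embedding of $k\oplus P^{(k)}_n$ into $W^{(k)}_n$ via two applications of (\ref{structeq}) for the converse. You are in fact more careful than the paper on one point: the paper simply invokes Lemmas \ref{lemcs>} and \ref{centersq} for (i) without explaining why $A'^2$ lands in $P^{(k)}_n$ rather than merely in $U^{(k)}_n$, whereas your ``no overrun'' paragraph fills precisely this gap (and is correct, since $|V^{(k)}_n|_{n-2k+1}=0$ rules out occurrences of $V^{(k)}_n$ straddling the block boundaries of $P^{(k)}_n$, so Lemma \ref{lemVunique} and Corollary \ref{corV0} indeed account for all occurrences).
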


\begin{proof}{Let $A^2$ is a straddling square of $W^{(k)}_{n}$. Then
by Corollary \ref{corU} we have $A^2\ominus k\prec U^{(k)}_{n}$. Using Lemmas \ref{lemcs>} and \ref{centersq} we conclude that
$	|W^{(k)}_{n-2k+1}|\leq c_s(A'^2, P^{(k)}_{n})\leq |W^{(k)}_{n-2k+1}|+|V^{(k)}_{n}|$. Moreover, since
$A^2$ is a straddling factor of 	$W^{(k)}_{n}$ $A'^2$ satisfying  (ii).

Now, let $A'^2$ is a square of $P^{(k)}_{n}$ which satisfies (i) and (ii), then we prove that $A^2=A'^2\oplus k$ is a straddling square of $W^{(k)}_{n}$. By Lemma \ref{lemsq<2k-1}, we conclude that $n\geq 2k-1$. Using Equation (\ref{structeq}) for $W^{(k)}_{n}$ and $W^{(k)}_{n-k+1}$ we have
\begin{align*}
W^{(k)}_{n} =& W^{(k)}_{n-1}\ldots W^{(k)}_{n-k+1}(k\oplus W^{(k)}_{n-k})\\
W^{(k)}_{n} =& W^{(k)}_{n-1}\ldots W^{(k)}_{n-k+2}
( W^{(k)}_{n-k}\ldots W^{(k)}_{n-2k+2}(k\oplus W^{(k)}_{n-2k+1}))(k\oplus W^{(k)}_{n-k})
\end{align*}
Since $W^{(k)}_{n-2k+1}V^{(k)}_{n}\lhd W^{(k)}_{n-k}$,
 we conclude that
 {\small\begin{equation}\label{eqA+kinP}
 W^{(k)}_{n} = W^{(k)}_{n-1}\ldots W^{(k)}_{n-k+2}
 W^{(k)}_{n-k}\ldots W^{(k)}_{n-2k+2}\underbrace{(k\oplus W^{(k)}_{n-2k+1} W^{(k)}_{n-2k+1}V^{(k)}_{n})}_{ P^{(k)}_{n}}(k\oplus  (W^{(k)}_{n-2k+1}V^{(k)}_{n})^{-1} W^{(k)}_{n-k})
 \end{equation}}
Using the fact that $A'^2$ satisfies (i) and (ii) and as shown in Equation (\ref{eqA+kinP}), we conclude that
$A^2=A'^2\oplus k$ is a straddling square of $ W^{(k)}_{n}$.
}	
\end{proof}

\begin{thm}\label{thmsqWn}
 Let $0\leq j\leq |V^{(k)}_{n}|$. Then $(C^{(j)} (k\oplus W^{(k)}_{n-2k+1}))^2$
		is a straddling square of  $W^{(k)}_n$. Moreover, every straddling square $A^2$ of $W^{(k)}_{n}$ is of the form
	$A^2=(C^{(j)} (k\oplus W^{(k)}_{n-2k+1}))^2$, for some $0\leq j\leq |V^{(k)}_{n}|$.
\end{thm}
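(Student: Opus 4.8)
The plan is to read the statement off the reduction packaged in Lemma~\ref{lemA2inP}. Throughout the proof, write $W:=W^{(k)}_{n-2k+1}$, $V:=V^{(k)}_{n}$, and $P:=P^{(k)}_{n}=WWV$ for brevity; then Lemma~\ref{lemA2inP} says that the straddling squares $A^2$ of $W^{(k)}_{n}$ are exactly the words $(A'A')\oplus k$, where $A'A'$ ranges over the squares of $P$ such that the first copy reaches the end of the first $W$-block (center $\ge|W|$) and the square begins inside that block (start $<|W|$). We may assume $n>2k-1$, the case $n=2k-1$ being Lemma~\ref{lemsq=2k-1}. The one structural fact about $P$ that I will use is that, by Lemma~\ref{lemVprefW} (in each of its cases (i)--(iii)), $V$ is a proper prefix of $W$, so $P=WWV$ is the square $WW$ followed by a prefix of $W$.

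\emph{Existence.} Fix $0\le j\le|V|$. The word $C^{(j)}(W)=W[j+1,|W|]\,W[1,j]$ has length $|W|$ and occurs in $WW$ starting at position $j$; it occurs again immediately after, because the tail $W[1,j]$ needed to finish that second copy is the length-$j$ prefix of $W$, which is present in $P$ since the block following $WW$ is $V$, a prefix of $W$ of length $\ge j$. Hence $(C^{(j)}(W))^2$ is a square of $P$ that begins at position $j<|W|$ and has center $j+|W|\in[\,|W|,\,|W|+|V|\,]$; so conditions (i)--(ii) of Lemma~\ref{lemA2inP} hold, and therefore $A^2:=(C^{(j)}(W))^2\oplus k=(C^{(j)}(k\oplus W))^2$ is a straddling square of $W^{(k)}_{n}$.

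\emph{Every straddling square has this form.} Let $A^2$ be a straddling square of $W^{(k)}_{n}$ and put $A'A'=A^2\ominus k$; by Lemma~\ref{lemA2inP}, $A'A'$ is a square of $P=WWV$ that begins at some position $t<|W|$ and whose center is $\ge|W|$. The crux is to pin down the period $\ell:=|A'|$, and for this I use a single digit count. Let $d:=n-2k+1$. By Lemma~\ref{lastdig}, $d$ is the last letter of $W$ and occurs exactly once in $W$; and by Definition~\ref{defV} together with Lemma~\ref{lastdig}, every block $W^{(k)}_{m}$ occurring in $V$ satisfies $m\le n-2k<d$ and hence uses only digits $<d$, so $|V|_{d}=0$. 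Consequently $d$ occurs in $P=WWV$ exactly twice, at the ends of the two $W$-blocks, and these two occurrences are $|W|$ positions apart. Since $t<|W|$ while the center of $A'A'$ is $\ge|W|$, the first copy $A'$ overlaps the end of the first $W$-block and so contains the letter $d$; then so does the second copy; hence $A'A'$ contains at least two occurrences of $d$, so exactly the two occurrences of $d$ in $P$, one in each copy. The shift carrying the $d$ of the first copy onto the $d$ of the second equals both the period $\ell$ and the distance $|W|$ between the two $d$'s of $P$; therefore $\ell=|W|$.

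\emph{Conclusion.} Since $A'A'$ is a factor of $P=WWV$ of length $2|W|$ beginning at $t<|W|$, its first copy $A'$ is a factor of the square $WW$ of length $|W|$ beginning at position $t$, hence $A'=C^{(t)}(W)$; and since $A'A'$ fits inside $P$, i.e.\ $t+2|W|\le|P|=2|W|+|V|$, we get $t\le|V|$. Therefore $A'A'=(C^{(t)}(W))^2$ with $0\le t\le|V|=|V^{(k)}_{n}|$, and $A^2=(A'A')\oplus k=(C^{(t)}(k\oplus W^{(k)}_{n-2k+1}))^2$, as required. The whole argument hinges on the period computation of the previous paragraph: once the correct auxiliary digit $d=n-2k+1$ is isolated it is immediate, and everything else is the black-box reduction of Lemma~\ref{lemA2inP} together with the prefix relation $V^{(k)}_{n}\lhd W^{(k)}_{n-2k+1}$ coming from Lemma~\ref{lemVprefW}. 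The only genuinely delicate point when filling in details is the (half-integer) center and placement bookkeeping at the boundary value $j=|V^{(k)}_{n}|$, where the trailing block $V^{(k)}_{n}$ is consumed completely.
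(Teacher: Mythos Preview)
Your proof is correct and follows essentially the same approach as the paper's: both reduce via Lemma~\ref{lemA2inP} to squares inside $P^{(k)}_{n}=W^{(k)}_{n-2k+1}W^{(k)}_{n-2k+1}V^{(k)}_{n}$, use Lemma~\ref{lemVprefW} for the prefix relation $V^{(k)}_{n}\lhd W^{(k)}_{n-2k+1}$, and exploit that the digit $d=n-2k+1$ occurs exactly once in each copy of $W^{(k)}_{n-2k+1}$ and not at all in $V^{(k)}_{n}$. The only cosmetic difference is in the converse direction: the paper fixes the center, writes $W^{(k)}_{n-2k+1}=V^{(k)}_{n}V'$ and $V^{(k)}_{n}=V_1V_2$, and tracks the subword $V_2V'$ to conclude $A'=V_2V'V_1$, whereas you use the two occurrences of $d$ in $P^{(k)}_{n}$ (sitting $|W^{(k)}_{n-2k+1}|$ apart) to read off the period $|A'|=|W^{(k)}_{n-2k+1}|$ directly; your version is a bit cleaner but rests on the same structural fact.
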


\begin{proof}{By Lemma \ref{lemVprefW}, $V^{(k)}_{n}\lhd W^{(k)}_{n-2k+1}$. Hence, there exists suffix $V'$ of $W^{(k)}_{n-2k+1}$ such that $W^{(k)}_{n-2k+1}=V^{(k)}_{n}V'$. Let $0\leq j\leq |V^{(k)}_{n}|$ and $V_1= V^{(k)}_{n}[1,j]$ and $V_2= V^{(k)}_{n}[j+1,|V^{(k)}_{n}|]$. Then
		\begin{equation}\label{eqv1v2v'}
		P^{(k)}_{n}= V_1 \overbrace{ V_2 V'V_1 }^{C^{(j)} (W^{(k)}_{n-2k+1})}\underbrace{V_2 V'V_1}_{C^{(j)} (W^{(k)}_{n-2k+1})} V_2
		\end{equation}
Now, using Lemma \ref{lemA2inP} and Equation (\ref{eqv1v2v'}), 	$(C^{(j)} (k\oplus W^{(k)}_{n-2k+1}))^2$ is a straddling square of 	$W^{(k)}_{n}$.
		
Moreover, let $A^2$ be a straddling square of $W^{(k)}_{n}$. Hence the first $A$ in $A^2$ should contains $n-k+1$. By Lemma \ref{lemA2inP}  $A'^2=A^2\ominus k$ is a square factor of $P^{(k)}_{n}$	satisfying the conditions of the lemma. 	
		Therefore, $|A'|_{n-2k+1}\geq 1$. By Lemma \ref{lemA2inP}, we can assume that $c_s(A'^2,  P^{(k)}_{n})=|W^{(k)}_{n-2k+1}|+j+\frac{1}{2}$ for some $0\leq j\leq |V^{(k)}_{n}|$.

			Again using Lemma \ref{lemVprefW}, $V^{(k)}_{n}\lhd W^{(k)}_{n-2k+1}$. Let $V'\rhd W^{(k)}_{n-2k+1}$ such that $W^{(k)}_{n-2k+1}=V^{(k)}_{n}V'$, $V_1= V^{(k)}_{n}[1,j]$ and $V_2= V^{(k)}_{n}[j+1,|V^{(k)}_{n}|]$. Therefore, for the first $A'$ in $A'^2$ we have
		$A'\rhd V_1 V_2 V' V_1$ and for the last $A'$ in $A'^2$, $A'\lhd  V_2 V' V_1 V_2$. On the other hand $V_1 V_2 V' V_1 \prec W^{(k)}_{n-2k+1}V^{(k)}_{n}$ and by Definition \ref{defV} and Lemma \ref{lastdig}, $|W^{(k)}_{n-2k+1}V^{(k)}_{n}|_{n-2k+1}=1$, hence $|A'|_{n-2k+1}=1$.
		Since the first place that $n-2k+1$ occurs in $ V_2 V' V_1 V_2$ is $|V_2 V'|$, hence
		$V_2 V' \lhd A'$. Since the number of occurrences of $V_2 V'$ in $ V_1 V_2 V' V_1$ is once. We conclude that $A'= V_2 V' V_1$.
		}
\end{proof}

\begin{thm}\label{thmallsqW}
Let $k\geq 3$. Then $A^2$ is a square of $W^{(k)}$ if and only if $A\in \{ki\oplus C^j(W^{(k)}_{n-2k+1}): 0\leq j \leq |V^{(k)}_{n}|, i>0, n\geq 0\}$. 	
\end{thm}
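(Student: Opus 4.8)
The plan is to derive Theorem \ref{thmallsqW} from Theorem \ref{thmsqWn} together with the reduction results Corollary \ref{corsq=strad} and Corollary \ref{corki}. First I would prove the ``if'' direction. Given $A = ki \oplus C^j(W^{(k)}_{n-2k+1})$ with $0\le j\le |V^{(k)}_n|$, $i>0$ and $n\ge 0$ (implicitly $n\ge 2k-1$, since $V^{(k)}_n$ is only defined there and is empty otherwise), Theorem \ref{thmsqWn} gives that $(C^j(k\oplus W^{(k)}_{n-2k+1}))^2 = ((k\oplus C^j(W^{(k)}_{n-2k+1})))^2$ is a straddling square of $W^{(k)}_n$, hence a factor of $W^{(k)}_n$, hence a factor of $W^{(k)}$. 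Adding $ki$ to every digit, I would use Corollary \ref{corki} (more precisely its consequence that $W^{(k)}_m \oplus k\ell \prec W^{(k)}$ for all $m,\ell\ge 0$, obtained by iterating $W^{(k)}_m\oplus k\ell \prec W^{(k)}_{m+k\ell} \prec W^{(k)}$): applying it with $m = n$ and $\ell = i-1$ shows $W^{(k)}_n \oplus k(i-1) \prec W^{(k)}$, and since the square $((k\oplus C^j(W^{(k)}_{n-2k+1})))^2$ sits inside $W^{(k)}_n$, shifting by $k(i-1)$ produces $((ki \oplus C^j(W^{(k)}_{n-2k+1})))^2 = A^2 \prec W^{(k)}$. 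This establishes that every such $A$ yields a square of $W^{(k)}$.

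For the ``only if'' direction, suppose $A^2$ is a square of $W^{(k)}$. By Corollary \ref{corsq=strad} there are integers $i'$ and $n$ such that $A^2 \ominus ki'$ is a straddling square of $W^{(k)}_n$. By Lemma \ref{lemsq<2k-1} and Lemma \ref{lemsq=2k-1} we must have $n\ge 2k-1$ (otherwise there are no straddling squares, or only $kk$ at $n=2k-1$, which is the case $n=2k-1$, $W^{(k)}_{n-2k+1}=W^{(k)}_0=0$, $j=0$). Now Theorem \ref{thmsqWn} applies to the straddling square $A^2\ominus ki'$ of $W^{(k)}_n$ and yields $A^2 \ominus ki' = (C^j(k\oplus W^{(k)}_{n-2k+1}))^2$ for some $0\le j\le |V^{(k)}_n|$. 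Rearranging, $A = ki' \oplus C^j(k\oplus W^{(k)}_{n-2k+1}) = k(i'+1)\oplus C^j(W^{(k)}_{n-2k+1})$, so with $i = i'+1 > 0$ we get $A = ki\oplus C^j(W^{(k)}_{n-2k+1})$, which is of the required form.

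I would then remark that one should double-check the boundary bookkeeping: the statement allows $n\ge 0$, so for $0\le n < 2k-1$ the set $\{ki\oplus C^j(W^{(k)}_{n-2k+1})\}$ contributes nothing new (either $n-2k+1<0$ so the term is vacuous, or it coincides with the degenerate $n=2k-1$ case), which is harmless since those $W^{(k)}_n$ have no squares by Lemma \ref{lemsq<2k-1}; and one must make sure the index shift in Corollary \ref{corki} is applied with the correct offset so that $i>0$ on the nose rather than $i\ge 0$. The main obstacle is purely organizational rather than mathematical: almost all the real content has been front-loaded into Theorem \ref{thmsqWn} (the exact description of straddling squares of a fixed $W^{(k)}_n$) and into the reduction lemmas, so the proof of Theorem \ref{thmallsqW} is essentially a matter of stitching ``every square of $W^{(k)}$ is a $ki$-shift of a straddling square of some $W^{(k)}_n$'' together with ``the straddling squares of $W^{(k)}_n$ are exactly the conjugate squares $(C^j(k\oplus W^{(k)}_{n-2k+1}))^2$'' and keeping the index conventions ($i$ versus $i+1$, $n\ge 2k-1$ versus $n\ge 0$) straight.
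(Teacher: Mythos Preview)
Your proposal is correct and follows essentially the same approach as the paper: both directions are obtained by combining Theorem \ref{thmsqWn} with Corollary \ref{corsq=strad} and Corollary \ref{corki}, with the index shift $i=i'+1$ handling the $i>0$ constraint. In fact your ``only if'' direction is spelled out more completely than the paper's, which stops short of explicitly invoking Theorem \ref{thmsqWn} after the reduction via Corollary \ref{corsq=strad}; your remarks on the vacuous $n<2k-1$ cases are also a useful addition not present in the paper.
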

\begin{proof}{
If $A=  ki\oplus C^j(W^{(k)}_{n-2k+1})$, for some $0\leq j \leq |V^{(k)}_{n}|, i>0, n\geq 0$, then by Theorem \ref{thmsqWn}
$k\oplus C^j(W^{(k)}_{n-2k+1})=(A^2 \ominus k(i-1))\prec W^{(k)}_{n}$ or equivalently  $A^2 \prec W^{(k)}_{n}\oplus k(i-1)$.  By Corollary \ref{corki}, we conclude that $A^2 \prec W^{(k)}_{n+k(i-1)}\prec W^{(k)}$.

On the other hand, if $A^2$ is a square of $W^{(k)}$, then by Corollary \ref{corsq=strad}, there exist $n>2k-1, i>0$, such that $A^2\ominus k(i-1)$ is a straddling square of $W_n^{(k)}$.  By Corollary \ref{corki}, we conclude that $A^2 \prec W^{(k)}_{n+k(i-1)}$.
}
\end{proof}
We finish this section with the following example.

\begin{example} In this example we provide all square factors of $W^{(3)}_{11}$, which is given bellow. All of these squares are listed in Table \ref{tableallsq} according to Theorem \ref{thmallsqW}. We note that letters $a$ and $b$ stand for the digits $10$ and $11$.
	\begin{align*}
	W^{(3)}_{11}=
	&010201301023401020133435010201301023434353460102013010234010201334353435346343567\\
	&010201301023401020133435010201301023434353463435346343567343534667680102013010234\\
	&010201334350102013010234343534601020130102340102013343534353463435673435346343567\\
	&3435346676834353463435676768679010201301023401020133435010201301023434353460102013\\
	&0102340102013343534353463435670102013010234010201334350102013010234343534634353463\\
	&4356734353466768343534634356734353466768343534634356767686793435346343567343534667\\
	&68676867967689a0102013010234010201334350102013010234343534601020130102340102013343\\
	&5343534634356701020130102340102013343501020130102343435346343534634356734353466768\\
	&0102013010234010201334350102013010234343534601020130102340102013343534353463435673\\
	&4353463435673435346676834353463435676768679343534634356734353466768343534634356767\\
	&68679343534634356734353466768676867967689a3435346343567343534667683435346343567676\\
	&8679676867967689a67686799a9b
	\end{align*}

\begin{table}\caption{Square factors of $W^{(3)}_{11}$}\label{tableallsq}
\begin{tabular}{|p{0.9in}|p{0.2in}|p{1.7in}|p{1.0in}|p{0.4in}|} \hline
{ }& \diaghead{\theadfont IIIIIIIIIIII}
	{{\;\large \textbf{j}}}{{\large \textbf{i}\;}}& 1 & 2 & 3 \\ \hline
	\vspace*{0.01 cm}$C^{(j)} (ki\oplus W^{(3)}_0)$ & \vspace*{0.01 cm}
		0& \vspace*{0.01 cm} 3 & \vspace*{0.01 cm}6 & \vspace*{0.01 cm}9 \\ \hline
	\multirow{2}{*}{$C^{(j)} (ki\oplus W^{(3)}_1)$}
	& 0 & 34 & 67 & - \\
	& 1 & 43 & 76 & - \\ \hline
	\multirow{3}{*}{$C^{(j)} (ki\oplus W^{(3)}_2)$}
	& 0 & 3435 & 6768 & - \\
	& 1 & 4353 & 7686 & - \\
	& 2 & 3534 & 6867 & - \\  \hline
	\multirow{5}{*}{$C^{(j)} (ki\oplus W^{(3)}_3)$}
	& 0 & 3435346 & 6768679 & - \\
	& 1 & 4353463 & 7686796 & - \\
	& 2 & 3534634 & 6867967 & -\\
	& 3 & 5346343 & 8679676 & - \\
	& 4 & 3463435 & 6796768 & - \\  \hline
		\multirow{8}{*}{$C^{(j)} (ki\oplus W^{(3)}_4)$}
	& 0 & 3435346343567 & - & - \\
	& 1 & 4353463435673 & - & - \\
	& 2 & 3534634356734 & - & -\\
	& 3 & 5346343567343 & - & - \\
	& 4 & 3463435673435 & - & - \\
	& 5 & 4634356734353 & - & -\\
	& 6 & 6343567343534 & - & - \\
	& 7 & 3435673435346 & - & - \\  \hline
		\multirow{8}{*}{$C^{(j)} (ki\oplus W^{(3)}_4)$}
	& 0 & 343534634356734353466768 & - & - \\
	& 1 & 435346343567343534667683 & - & - \\
	& 2 & 353463435673435346676834 & - & -\\
	& 3 & 534634356734353466768343 & - & - \\
	& 4 & 346343567343534667683435 & - & - \\
	& 5 & 463435673435346676834353 & - & -\\
	& 6 & 634356734353466768343534 & - & - \\
	& 7 & 343567343534667683435346 & - & - \\
	& 8 & 435673435346676834353463 & - & - \\
	& 9 & 356734353466768343534634 & - & - \\
	& 10 & 567343534667683435346343 & - & -\\
	& 11 & 673435346676834353463435 & - & - \\
	& 12 & 734353466768343534634356 & - & - \\
	& 13 & 343534667683435346343567 & - & -\\  \hline
\end{tabular}
\end{table}
\begin{table}
\begin{tabular}{|p{0.9in}|p{0.4in}|p{3in}|p{0.2in}|p{0.2in}|} \hline
{ }& \diaghead{\theadfont IIIIIIIIIIII}
{{\;\large \textbf{j}}}{{\large \textbf{i}\;}}& 1 & 2 & 3 \\ \hline
	
	\multirow{8}{*}{$C^{(j)} (ki\oplus W^{(3)}_4)$}
	&0 & 34353463435673435346676834353463435676768679 & - & - \\
	& 1 & 43534634356734353466768343534634356767686793 & - & - \\
	& 2 & 35346343567343534667683435346343567676867934 & - & -\\
	& 3 & 53463435673435346676834353463435676768679343 & - & - \\
	& 4 & 34634356734353466768343534634356767686793435 & - & - \\
	&5 & 46343567343534667683435346343567676867934353 & - & -\\
	& 6 & 63435673435346676834353463435676768679343534 & - & - \\
	& 7 & 34356734353466768343534634356767686793435346 & - & - \\
	& 8 & 43567343534667683435346343567676867934353463 & - & - \\
	& 9 & 35673435346676834353463435676768679343534634 & - & - \\
	& 10 & 56734353466768343534634356767686793435346343 & - & -\\
	& 11 & 67343534667683435346343567676867934353463435 & - & - \\
	& 12 & 73435346676834353463435676768679343534634356 & - & - \\
	& 13 & 34353466768343534634356767686793435346343567 & - & -\\
	& 14 & 43534667683435346343567676867934353463435673 & - & - \\
	& 15 & 35346676834353463435676768679343534634356734 & - & -\\
	& 16 & 53466768343534634356767686793435346343567343 & - & - \\
	& 17 & 34667683435346343567676867934353463435673435 & - & - \\
	& 18 & 46676834353463435676768679343534634356734353 & - & - \\
	& 19 & 66768343534634356767686793435346343567343534 & - & - \\
	& 20 & 67683435346343567676867934353463435673435346 & - & -\\
	& 21 & 76834353463435676768679343534634356734353466 & - & - \\
	& 22 & 68343534634356767686793435346343567343534667 & - & - \\
	& 23 & 83435346343567676867934353463435673435346676 & - & -\\  \hline
\end{tabular}
\end{table}
\end{example}


\newpage
\section{Critical Exponent and Critical Factors of $W^{(k)}$}
\begin{lem}\label{lempower<}
	let $A^2$ be a straddling square of $W^{(k)}_{n}$. Then
	\begin{equation*}
	\rm{INDEX}(A, W^{(k)}_{n}) = \left\{
	\begin{array}{ll}
3-\frac{1}{2^{n-2k+1}}\;\; & \text{if } 2k-1\leq n \leq 3k-3, \\
3-\frac{1}{2^{k-2}}\;\; & \text{if } n =3k-2, \\
3-\frac{|W^{(k)}_{n-3k+2}|+|W^{(k)}_{n-3k+1}|}{|W^{(k)}_{n-2k+1}|}\;\; & \text{if } n> 3k-2.
	\end{array} \right.
	\end{equation*}
\end{lem}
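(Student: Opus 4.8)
The plan is to exhibit, for each straddling square $A^2$ of $W^{(k)}_n$, one long stretch of $W^{(k)}_n$ with period $|A|$ and to check that it cannot be lengthened. By Theorem \ref{thmsqWn}, $A$ is a conjugate of $k\oplus W^{(k)}_{n-2k+1}$, so $|A|=\ell:=|W^{(k)}_{n-2k+1}|$ and $\rm{INDEX}(A,W^{(k)}_n)\ge 2$; thus it suffices to locate the longest factor of $W^{(k)}_n$ with period $\ell$. Expanding $W^{(k)}_{n-k+1}$ once in (\ref{structeq}) — equivalently, reading off (\ref{eqA+kinP}) — shows that $k\oplus\big(W^{(k)}_{n-2k+1}W^{(k)}_{n-k}\big)$ is a suffix of $W^{(k)}_n$, preceded by the last symbol of $W^{(k)}_{n-2k+2}$, which by Lemma \ref{lastdig} is $n-2k+2\neq n-k+1$; and $U^{(k)}_n$ is exactly its length-$4\ell$ prefix.

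Next I would substitute the explicit forms of $U^{(k)}_n$ from Lemmas \ref{lemU=<}, \ref{lemU==} and \ref{lemU=>}. In every case $U^{(k)}_n$ starts with $W^{(k)}_{n-2k+1}W^{(k)}_{n-2k+1}V^{(k)}_n$, and since $V^{(k)}_n\lhd W^{(k)}_{n-2k+1}$ (Lemma \ref{lemVprefW}) the period-$\ell$ stretch beginning at position $1$ of $U^{(k)}_n$ is at least $2\ell+|V^{(k)}_n|$ long. Writing $W^{(k)}_{n-2k+1}=V^{(k)}_nV'$, the symbol of $U^{(k)}_n$ immediately after this initial $V^{(k)}_n$ is $n-2k+2$ (when $2k-1<n<3k-2$), resp. $k$ (when $n=3k-2$), resp. the leading symbol $k$ of $k\oplus W^{(k)}_{n-3k+2}$ (when $n>3k-2$), whereas period $\ell$ would call for the leading symbol of $V'$, namely $n-2k+1$, resp. $0$, resp. $0$; these differ, so the stretch has length exactly $2\ell+|V^{(k)}_n|$, and by the remark above it does not grow to the left inside $W^{(k)}_n$ either. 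Hence $W^{(k)}_n$ contains $\big(k\oplus W^{(k)}_{n-2k+1}\big)^{2+|V^{(k)}_n|/\ell}$, and the main obstacle is to show that no period-$\ell$ stretch of $W^{(k)}_n$ is longer: I would derive this from the square classification (Corollary \ref{corsq=strad}) together with Lemma \ref{ww<w}, which force every period-$\ell$ run to sit at the straddling boundary, combined with the uniqueness statements of Lemmas \ref{lem*V*} and \ref{lem*V0} (a longer run would create a second occurrence of $(n-2k+1)V^{(k)}_n(n-2k+1)$ or of $(n-2k+1)V^{(k)}_n0$ inside $U^{(k)}_n$). This gives $\rm{INDEX}(A,W^{(k)}_n)=2+\tfrac{|V^{(k)}_n|}{\ell}=3-\tfrac{|V'|}{\ell}$.

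Finally I would evaluate $|V'|$ and $\ell$. By Lemma \ref{lemVprefW}, $|V'|=1$ for $2k-1<n<3k-2$, $|V'|=2$ for $n=3k-2$, and $|V'|=|W^{(k)}_{n-3k+2}|+|W^{(k)}_{n-3k+1}|$ for $n>3k-2$. For the first two ranges $\ell=|W^{(k)}_{n-2k+1}|=f^{(k)}_{n-k+1}$, and from (\ref{defkbonum}) one checks $f^{(k)}_{k+m}=2^m$ for $0\le m\le k-2$ and $f^{(k)}_{2k-1}=2^{k-1}$, so $\ell=2^{\,n-2k+1}$ for $n\le 3k-3$ and $\ell=2^{k-1}$ for $n=3k-2$; substituting yields $3-2^{-(n-2k+1)}$ and $3-2\cdot 2^{-(k-1)}=3-2^{-(k-2)}$ respectively, while for $n>3k-2$ the expression $3-\frac{|W^{(k)}_{n-3k+2}|+|W^{(k)}_{n-3k+1}|}{|W^{(k)}_{n-2k+1}|}$ is already in the stated form. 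The case $n=2k-1$ (where $V^{(k)}_n$ is undefined) I would dispatch directly via Lemma \ref{lemsq=2k-1}: there $A=k$ and $kkk\not\prec W^{(k)}_{2k-1}$, since by (\ref{eqrecW2k-1}) a third adjacent $k$ would force $00$ at the start of $W^{(k)}_{k-1}$, contradicting Lemma \ref{00}; so the index is $2$, which is the value of the first formula at $n-2k+1=0$.
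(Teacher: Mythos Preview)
Your route and the paper's coincide at the core step: both recognise that the relevant object is $P^{(k)}_{n}=W^{(k)}_{n-2k+1}W^{(k)}_{n-2k+1}V^{(k)}_{n}$ and that it equals a fractional power of $W^{(k)}_{n-2k+1}$, from which the three formulae drop out. The paper's argument is far terser than yours: it just quotes Lemma~\ref{lemA2inP} to place $A^{2}\ominus k$ inside $P^{(k)}_{n}$ and then rewrites $P^{(k)}_{n}$ as $(W^{(k)}_{n-2k+1})^{r}$ directly from Definition~\ref{defV} and the recursions~(\ref{struct1eq})--(\ref{structeq}), reading off $r$ in each range. It never checks that the period~$\ell$ actually breaks at either end of $P^{(k)}_{n}$, nor that no other location in $W^{(k)}_{n}$ can host a longer power of $A$; those are exactly the points you single out and try to justify via the explicit forms of $U^{(k)}_{n}$ (Lemmas~\ref{lemU=<}--\ref{lemU=>}) and via the square classification. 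So your proposal is more scrupulous than the published proof on this lemma. Two small adjustments: in your ``main obstacle'' paragraph the relevant reference is Lemma~\ref{lemsq=strad} rather than Corollary~\ref{corsq=strad} (you are working inside the finite word $W^{(k)}_{n}$), and Lemma~\ref{ww<w} is not really what pins the run to the straddling boundary---the cleaner argument is that $|A|=|W^{(k)}_{n-2k+1}|$ together with the digit range $[k,n-k+1]$ of $A$ forces the parameters $(i,m)$ in Lemma~\ref{lemsq=strad} to be $(0,n)$, so every occurrence of $A^{2}$ already lies in $k\oplus P^{(k)}_{n}$ by Lemma~\ref{lemA2inP}.
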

\begin{proof}{
		By Lemma \ref{lemA2inP}, $A^2\ominus k \prec P^{(k)}_{n}=W^{(k)}_{n-2k+1}W^{(k)}_{n-2k+1}V^{(k)}_{n}$. If $2k-1\leq n\leq 3k-3$, then by Definition \ref{defV}.
		\begin{align*}
		P^{(k)}_{n}&= W^{(k)}_{n-2k+1}W^{(k)}_{n-2k+1}V^{(k)}_{n}\\
		&= W^{(k)}_{n-2k+1}W^{(k)}_{n-2k+1}W^{(k)}_{n-2k} \ldots W^{(k)}_{0}\\
		&= W^{(k)}_{n-2k+1}W^{(k)}_{n-2k+1}W^{(k)}_{n-2k+1} (n-2k+1)^{-1}\\
		&= (W^{(k)}_{n-2k+1})^{3-\frac{1}{2^{n-2k+1}}}.
		\end{align*}
		Where the last equality holds since $|W^{(k)}_{n-2k+1}|=2^{n-2k+1}$.
		If $ n= 3k-2$, then by Definition \ref{defV}.
		\begin{align*}
		P^{(k)}_{3k-2}&= W^{(k)}_{k-1}W^{(k)}_{k-1}V^{(k)}_{3k-2}\\
		&= W^{(k)}_{k-1}W^{(k)}_{k-1}W^{(k)}_{k-2} \ldots W^{(k)}_{1}\\
		&= W^{(k)}_{k-1}W^{(k)}_{k-1}W^{(k)}_{k-1} (0(k-1))^{-1}\\
		&= (W^{(k)}_{k-1})^{3-\frac{1}{2^{k-2}}}.
		\end{align*}
	In the case $n>3k-2$, again by using Definition \ref{defV}, we conclude that $$P^{(k)}_{n}=(W^{(k)}_{n-2k+1})^{3-\frac{|W^{(k)}_{n-3k+2}|+|W^{(k)}_{n-3k+1}|}{|W^{(k)}_{n-2k+1}|}}.$$
}
\end{proof}

In the following example for $k=5$ and $9\leq n\leq 17$, we show that how Lemma \ref{lempower<} works.

\begin{example}\label{examallp}
	In this example we listed all  $P^{(5)}_{n}$, when $9\leq n\leq 17$ and for all values of $n$ we present the corresponding power $r$. We note that letters $a$ and $b$ stand for the digits $10$ and $11$.
	\begin{table}\caption{Powers of $W^{(5)}_{n-9}$ in $W^{(5)}_{n}$}\label{tableallp}
	\begin{tabular}{|p{0.2in}|p{4.2in}|p{0.4in}|} \hline
		n & $P^{(5)}_{n} \oplus k=(W^{(5)}_{n-9}\oplus k)^r $ & r \\ \hline
		9 & 55 & 2 \\ \hline
		10 & 56565 & $3-\frac{1}{2}$ \\ \hline
		11 & 56575657565 & $3-\frac{1}{4}$ \\ \hline
		12 & 56575658565756585657565 & $3-\frac{1}{8}$ \\ \hline
		13 & 5657565856575659565756585657565956575658565756 & $3-\frac{1}{8}$ \\ \hline
		14 & 565756585657565956575658565756a565756585657565956575658565756a
		5657565856575659565756585657 & $3-\frac{3}{31}$ \\ \hline
		15 & 565756585657565956575658565756a5657565856575659565756585657ab\newline 565756585657565956575658565756a5657565856575659565756585657ab\newline 565756585657565956575658565756a565756585657565956575658 & $3-\frac{6}{61}$ \\ \hline
		16 & 565756585657565956575658565756a5657565856575659565756585657ab\newline565756585657565956575658565756a565756585657565956575658abac\newline 565756585657565956575658565756a5657565856575659565756585657ab\newline565756585657565956575658565756a565756585657565956575658abac\newline 565756585657565956575658565756a5657565856575659565756585657ab\newline565756585657565956575658565756a5657565856575659 &$3-\frac{12}{120}$ \\ \hline
		17 & 565756585657565956575658565756a5657565856575659565756585657ab\newline565756585657565956575658565756a565756585657565956575658abac\newline565756585657565956575658565756a5657565856575659565756585657ab\newline565756585657565956575658565756a5657565856575659abacabad\newline 565756585657565956575658565756a5657565856575659565756585657ab\newline565756585657565956575658565756a565756585657565956575658abac\newline565756585657565956575658565756a5657565856575659565756585657ab\newline565756585657565956575658565756a5657565856575659abacabad\newline 565756585657565956575658565756a5657565856575659565756585657ab\newline565756585657565956575658565756a565756585657565956575658abac\newline565756585657565956575658565756a5657565856575659565756585657ab\newline565756585657565956575658565756a  & $3-\frac{24}{236}$ \\ \hline
	\end{tabular}
\end{table}
\end{example}

As we can see in Table \ref{tableallp} in Example \ref{examallp}, the largest
power of  $W^{(5)}_{n-9}\oplus k$ in $P^{(5)}_{n}\oplus k$ is $3-\frac{3}{31}$. This power happens when $n=14$, which is the critical exponent of  $P^{(5)}_{n}\oplus k$. Moreover,
in the following Theorem we show that
this $r$ is also the critical exponent of $W^{(5)}$.

\begin{thm}\label{thmcritical}
	Let $k\geq 3$, then the critical exponent of $W^{(k)}$ equals to $3-\frac{3}{2^k-1}$. Moreover, the set of all critical factors of
	$W^{(k)}$ is $\{P^{(k)}_{3k-1} \oplus ki\}$.
\end{thm}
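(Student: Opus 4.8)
\textbf{Proof proposal for Theorem \ref{thmcritical}.}

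The plan is to combine Theorem \ref{thmallsqW}, which classifies all squares of $W^{(k)}$, with the index formula in Lemma \ref{lempower<}, which records for each straddling square $A^2$ of $W^{(k)}_n$ the exact power $r_n$ to which $A$ appears. Since every factor $U$ with $\rm{INDEX}(U,W^{(k)})> 2$ contains a square $A^2$ with $A$ a conjugate of $ki\oplus U$ for suitable $i$, and adding a multiple of $k$ to the digits does not change the period structure, the critical exponent $E(W^{(k)})$ is simply the supremum over $n$ of the numbers $r_n$ appearing in Lemma \ref{lempower<}. So the first step is to reduce the computation of $E(W^{(k)})$ to $\sup_n r_n$ where $r_n$ is $3-\frac{1}{2^{n-2k+1}}$ for $2k-1\le n\le 3k-3$, equals $3-\frac{1}{2^{k-2}}$ for $n=3k-2$, and equals $3-\frac{|W^{(k)}_{n-3k+2}|+|W^{(k)}_{n-3k+1}|}{|W^{(k)}_{n-2k+1}|}$ for $n>3k-2$. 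One must also check that no factor with index $\le 2$ can be critical, which is immediate once we exhibit a factor of index $>2$.

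The second step is the arithmetic: maximize $r_n$ over all admissible $n$. For $2k-1\le n\le 3k-2$ the values form the increasing sequence $3-\frac{1}{2},3-\frac{1}{4},\dots$ up to $3-\frac{1}{2^{k-2}}$, all strictly less than $3-\frac{3}{2^k-1}$ (this last inequality is a short check: $\frac{1}{2^{k-2}}>\frac{3}{2^k-1}$ iff $2^k-1>3\cdot 2^{k-2}=\tfrac34 2^k$, i.e. $\tfrac14 2^k>1$, true for $k\ge3$). For $n>3k-2$ I would write $a_m:=|W^{(k)}_m|=f^{(k)}_{m+k}$ and study $g(n):=\frac{a_{n-3k+2}+a_{n-3k+1}}{a_{n-2k+1}}$, which we want to minimize. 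Setting $m=n-3k+2$, the denominator is $a_{m+k-1}$, so $g=\frac{a_m+a_{m-1}}{a_{m+k-1}}$; using the $k$-bonacci recurrence to expand $a_{m+k-1}$ downward one sees $g$ is minimized at the smallest admissible $m$, namely $m=1$ (i.e. $n=3k-1$), giving denominator $a_k=|W^{(k)}_k|=2^k-1$ and numerator $a_1+a_0=2+1=3$. Hence the minimum of $g$ is $\frac{3}{2^k-1}$, attained uniquely at $n=3k-1$, so $\max_n r_n=3-\frac{3}{2^k-1}$, attained exactly at $n=3k-1$. I expect the monotonicity claim "$g(n)$ is decreasing for $3k-2<n\le 3k-1$ and increasing afterward, hence minimized at $n=3k-1$" to be the only delicate point; it should follow from the fact that $\frac{a_{m+1}}{a_m}\to\alpha_k\in(1,2)$ while $\frac{a_{m+k-1}}{a_m}$ grows, combined with explicit base cases, but the clean way is to show $g(n+1)\ge g(n)$ for $n\ge 3k-1$ directly from the recurrence $a_{p}=a_{p-1}+\dots+a_{p-k}$.

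Finally, for the critical factors: by Theorem \ref{thmallsqW} every square of $W^{(k)}$ achieving the critical index is $A^2$ with $A=ki\oplus C^j(W^{(k)}_{n-2k+1})$, and by the uniqueness in the maximization this forces $n=3k-1$, so $A=ki\oplus C^j(W^{(k)}_k)$ and the critical factor (the maximal power $A^r$) is exactly $P^{(k)}_{3k-1}\oplus ki = (W^{(k)}_k\oplus ki)^{\,3-3/(2^k-1)}$ for $i\ge 1$, using that $P^{(k)}_{3k-1}$ realizes this power by Lemma \ref{lempower<} and that Corollary \ref{corki} lets us shift digits by any multiple of $k$ while staying inside $W^{(k)}$. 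One should also confirm that no conjugate $j>0$ gives a strictly longer power-prefix than $P^{(k)}_{3k-1}$ itself, which is the content of Lemma \ref{lemA2inP} combined with the definition of $P^{(k)}_n$; this pins down the critical factor set as precisely $\{P^{(k)}_{3k-1}\oplus ki : i\ge 1\}$, completing the proof.
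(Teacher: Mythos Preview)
Your overall strategy is the same as the paper's: reduce via Corollary~\ref{corsq=strad} and Lemma~\ref{lempower<} to maximizing the index $r_n$ over $n\ge 2k-1$, split into the three ranges of Lemma~\ref{lempower<}, and then identify the critical factors through Lemma~\ref{lemA2inP}. Your verification that the values for $2k-1\le n\le 3k-2$ are all below $3-\tfrac{3}{2^k-1}$ is correct and slightly more explicit than the paper's.

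The one real gap is exactly where you suspect it. You propose to show that $g(m)=\dfrac{a_m+a_{m-1}}{a_{m+k-1}}$ (with $a_m=|W^{(k)}_m|$) is increasing, so that it is minimized at $m=1$; the paper likewise simply asserts that $3-g$ is ``a decreasing function of $n$''. Both claims are false as stated. For $k=3$ one gets
\[
g(1)=\tfrac{3}{7}\approx 0.429,\quad g(2)=\tfrac{6}{13}\approx 0.462,\quad g(3)=\tfrac{11}{24}\approx 0.458,\quad g(4)=\tfrac{20}{44}\approx 0.455,
\]
so the sequence oscillates toward its limit $\tfrac{\alpha_k+1}{\alpha_k^{\,k}}$ rather than being monotone. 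Hence your ``clean way'' of proving $g(n+1)\ge g(n)$ directly from the recurrence cannot succeed. The conclusion that the minimum of $g$ over $m\ge 1$ is $g(1)=\tfrac{3}{2^k-1}$ is nevertheless correct; what is actually needed is the \emph{single} strict inequality
\[
(2^k-1)\,(a_m+a_{m-1}) \;>\; 3\,a_{m+k-1}\qquad\text{for every }m\ge 2,
\]
which one can prove by induction using the identity $a_{p}=2a_{p-1}-a_{p-k-1}$ (valid for $p\ge k+1$) together with the explicit base values $a_j=2^{j}$ for $0\le j\le k-1$ and $a_k=2^k-1$. With that inequality in hand, the uniqueness of the maximizer $n=3k-1$ follows, and the identification of the critical factors via Lemma~\ref{lemA2inP} and Corollary~\ref{corki} goes through as you outline.
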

\begin{proof}{
		By Theorem \ref{thmsqWn}, for all $n\geq 2k-1$, $W_n^{(k)}$ always contains a square factor. Hence $E(W^{(k)})\geq 2$.
		Let $A\in F(W^{(k)})$ and $r={\rm INDEX}(A)\geq 2$. We will prove that  $r\leq 3-\frac{3}{2^{k-1}}$. Since $r\geq 2$,
		$A^2$ is a square factor of $W^{(k)}$. By Corollary \ref{corsq=strad} there exisit integers $i$ and $n$ such that $A^2\ominus ki$ is a straddling square of $W_n^{(k)}$. Let $m_1=\max\{3-\frac{1}{2^{n-2k+1}}: 2k-1\leq n\leq 3k-3\}$, $m_2=3-\frac{1}{2^{k-2}}$ and $m_3=\max\{3-\frac{|W^{(k)}_{n-3k+2}|+|W^{(k)}_{n-3k+1}|}{|W^{(k)}_{n-2k+1}|}:  n\geq 3k-1\}$. Now, using Lemma \ref{lempower<} we conclude that $r\leq \max\{m_1, m_2, m_3\}$. It is easy to check that
		$m_1=m_2=3-\frac{1}{2^{k-2}}$.
		Since $g(n)=3-\frac{|W^{(k)}_{n-3k+2}|+|W^{(k)}_{n-3k+1}|}{|W^{(k)}_{n-2k+1}|}$ is a decreasing function of $n$, we conclude that $m_3=g(3k-1)=3-\frac{3}{2^k-1}$. Hence $r\leq 3-\frac{3}{2^k-1}$.
		On the other hand, by Lemma  \ref{lemA2inP}, $P^{(k)}_{3k-1} \oplus k=(W^{(k)}_{k})^{3-\frac{3}{2^k-1}}$.
		This implies that the set of all critical factors of  $W^{(k)}$ equals to $\{P^{(k)}_{3k-1}\oplus ki: i\geq 1\}$.
	}
\end{proof}

\begin{example} In Table \ref{tablecritical}, we compute the critical exponent and one of the critical factors of $W^{(k)}$, for $3\leq k\leq 8$, according to Theorem \ref{thmcritical}. We note that the digits $10,11, \ldots, 16$ are denoted by the letters $a, b, \ldots g$, respectively.

	\begin{table}\caption{the critical exponent and one of the critical factors of $W^{(k)}$}\label{tablecritical}
	\begin{tabular}{|p{0.2in}|p{4.4in}|p{0.8in}|} \hline
		k &  $P^{(k)}_{3k-1} \oplus k=(W^{(k)}_{k}\oplus k)^r $ & $r=3-\frac{3}{2^k-1} $\\ \hline
		3 &  343534634353463435 & $3-\frac{3}{7}$ \\ \hline
		4 & 454645474546458454645474546458454645474546 & $3-\frac{3}{15}$ \\ \hline
		5 & 565756585657565956575658565756a565756585657565956575658565756a
		5657565856575659565756585657 & $3-\frac{3}{31}$ \\ \hline
		6 & 676867696768676a676867696768676b676867696768676a67686769676867c
		676867696768676a676867696768676b676867696768676a67686769676867c
		676867696768676a676867696768676b676867696768676a676867696768 & $3-\frac{3}{63}$ \\ \hline
		7 & 7879787a7879787b7879787a7879787c7879787a7879787b7879787a7879787d
		7879787a7879787b7879787a7879787c7879787a7879787b7879787a787978e
		7879787a7879787b7879787a7879787c7879787a7879787b7879787a7879787d
		7879787a7879787b7879787a7879787c7879787a7879787b7879787a787978e
		7879787a7879787b7879787a7879787c7879787a7879787b7879787a7879787d
		7879787a7879787b7879787a7879787c7879787a7879787b7879787a7879
		& $3-\frac{3}{127}$ \\ \hline
		8 & 898a898b898a898c898a898b898a898d898a898b898a898c898a898b898a898e
		898a898b898a898c898a898b898a898d898a898b898a898c898a898b898a898f
		898a898b898a898c898a898b898a898d898a898b898a898c898a898b898a898e
		898a898b898a898c898a898b898a898d898a898b898a898c898a898b898a89g
		898a898b898a898c898a898b898a898d898a898b898a898c898a898b898a898e
		898a898b898a898c898a898b898a898d898a898b898a898c898a898b898a898f
		898a898b898a898c898a898b898a898d898a898b898a898c898a898b898a898e
		898a898b898a898c898a898b898a898d898a898b898a898c898a898b898a89g
		898a898b898a898c898a898b898a898d898a898b898a898c898a898b898a898e
		898a898b898a898c898a898b898a898d898a898b898a898c898a898b898a898f
		898a898b898a898c898a898b898a898d898a898b898a898c898a898b898a898e
		898a898b898a898c898a898b898a898d898a898b898a898c898a898b898a
		& $3-\frac{3}{255}$ \\ \hline
		\end{tabular}
\end{table}
\end{example}
\newpage
\bibliographystyle{acm}
\bibliography{mybib}

\begin{thebibliography}{10}

\bibitem{adamczewski2003balances}
{\sc Adamczewski, B.}
\newblock Balances for fixed points of primitive substitutions.
\newblock {\em Theoretical Computer Science 307}, 1 (2003), 47--75.

\bibitem{berstel1999index}
{\sc Berstel, J.}
\newblock On the index of sturmian words.
\newblock In {\em Jewels are forever}. Springer, 1999, pp.~287--294.

\bibitem{blondin2007critical}
{\sc Blondin-Mass{\'e}, A., Brlek, S., Glen, A., and Labb{\'e}, S.}
\newblock On the critical exponent of generalized thue-morse words.
\newblock {\em arXiv preprint arXiv:0710.4031\/} (2007).

\bibitem{bvrinda2014balances}
{\sc B{\v{r}}inda, K., Pelantov{\'a}, E., and Turek, O.}
\newblock Balances of m-bonacci words.
\newblock {\em Fundamenta Informaticae 132}, 1 (2014), 33--61.

\bibitem{carpi2000special}
{\sc Carpi, A., and de~Luca, A.}
\newblock Special factors, periodicity, and an application to sturmian words.
\newblock {\em Acta Informatica 36}, 12 (2000), 983--1006.

\bibitem{cassaigne2008extremal}
{\sc Cassaigne, J.}
\newblock On extremal properties of the fibonacci word.
\newblock {\em RAIRO-Theoretical Informatics and Applications 42}, 4 (2008),
  701--715.

\bibitem{damanik2002index}
{\sc Damanik, D., and Lenz, D.}
\newblock The index of sturmian sequences.
\newblock {\em European Journal of Combinatorics 23}, 1 (2002), 23--29.

\bibitem{de1981combinatorial}
{\sc de~Luca, A.}
\newblock A combinatorial property of the fibonacci words.
\newblock {\em Information Processing Letters 12}, 4 (1981), 193--195.

\bibitem{diekert2010weinbaum}
{\sc Diekert, V., Harju, T., and Nowotka, D.}
\newblock Weinbaum factorizations of primitive words.
\newblock {\em Russian Mathematics 54}, 1 (2010), 16--25.

\bibitem{dresden2014simplified}
{\sc Dresden, G., and Du, Z.}
\newblock A simplified binet formula for k-generalized fibonacci numbers.
\newblock {\em J. Integer Seq 17}, 4 (2014).

\bibitem{ghareghani2019arxive}
{\sc ghareghani, N., Mohammad~Noori, M., and Sharifani, P.}
\newblock Some properties of the k-bonacci words on infinite alphabet.
\newblock {\em arXiv preprint arXiv:1911.12416\/} (2019).

\bibitem{glen2006sturmian}
{\sc Glen, A.}
\newblock On sturmian and episturmian words, and related topics.
\newblock {\em Bulletin of the Australian Mathematical Society 74}, 1 (2006),
  155--160.

\bibitem{glen2009episturmian}
{\sc Glen, A., and Justin, J.}
\newblock Episturmian words: a survey.
\newblock {\em RAIRO-Theoretical Informatics and Applications 43}, 3 (2009),
  403--442.

\bibitem{glen2019more}
{\sc Glen, A., Simpson, J., and Smyth, W.}
\newblock More properties of the fibonacci word on an infinite alphabet.
\newblock {\em Theoretical Computer Science\/} (2019).

\bibitem{hare2014three}
{\sc Hare, K., Prodinger, H., and Shallit, J.}
\newblock Three series for the generalized golden mean.
\newblock {\em arXiv preprint arXiv:1401.6200\/} (2014).

\bibitem{justin2001fractional}
{\sc Justin, J., and Pirillo, G.}
\newblock Fractional powers in sturmian words.
\newblock {\em Theoretical computer science 255}, 1-2 (2001), 363--376.

\bibitem{mignosi1989infinite}
{\sc Mignosi, F.}
\newblock Infinite words with linear subword complexity.
\newblock {\em Theoretical Computer Science 65}, 2 (1989), 221--242.

\bibitem{mignosi1998periodicity}
{\sc Mignosi, F., Restivo, A., and Salemi, S.}
\newblock Periodicity and the golden ratio.
\newblock {\em Theoretical Computer Science 204}, 1-2 (1998), 153--167.

\bibitem{pirillo1997fibonacci}
{\sc Pirillo, G.}
\newblock Fibonacci numbers and words.
\newblock {\em Discrete Mathematics 173}, 1-3 (1997), 197--207.

\bibitem{tan2007some}
{\sc Tan, B., and Wen, Z.-Y.}
\newblock Some properties of the tribonacci sequence.
\newblock {\em European Journal of Combinatorics 28}, 6 (2007), 1703--1719.

\bibitem{vandeth2000sturmian}
{\sc Vandeth, D.}
\newblock Sturmian words and words with a critical exponent.
\newblock {\em Theoretical Computer Science 242}, 1-2 (2000), 283--300.

\bibitem{zhang2017some}
{\sc Zhang, J., Wen, Z., and Wu, W.}
\newblock Some properties of the fibonacci sequence on an infinite alphabet.
\newblock {\em The Electronic Journal of Combinatorics 24}, 2 (2017), 2--52.

\end{thebibliography}

\end{document}